\newtheorem{df}{Definition}[section]
\newtheorem{thm}[df]{Theorem}
\newtheorem{prop}[df]{Proposition}
\newtheorem{lemm}[df]{Lemma}
\newtheorem{cor}[df]{Corollary}
\newtheorem{rem}[df]{Remark}
\newcommand{\id}{\mathrm{id}}
\newcommand{\Q}{\mathbb{Q}}
\newcommand{\R}{\mathbb{R}}
\newcommand{\Z}{\mathbb{Z}}
\newcommand{\C}{\mathbb{C}}
\newcommand{\shuugou}[1]{\{ #1 \}}
\newcommand{\zettaiti}[1]{\lvert #1 \rvert}
\newcommand{\gyaku}[1]{ #1^{-1}}
\newcommand{\skein}[1]{\mathcal{S}( #1 )}
\newcommand{\kukakko}[1]{\langle #1 \rangle}
\newcommand{\defeq}{\stackrel{\mathrm{def.}}{=}}
\newcommand{\filtn}[1]{\{ #1 \}_{n \geq 0}}
\newcommand{\ii}{\sqrt{-1}}
\newcommand{\arccosh }{\mathrm{arccosh}}
\begin{document}

\title{Dehn twists on Kauffman bracket skein algebras}
\author{Shunsuke Tsuji}
\date{}
\address{Graduate School of Mathematical Sciences, The University of Tokyo, 
3-8-1 Komaba, Meguro-ku, Tokyo 153-8941, Japan}
\subjclass[2010]{57N05, 57M27}
\email{tsujish@ms.u-tokyo.ac.jp}
\keywords{Mapping class group, Kauffman bracket, Dehn twist, Skein algebra}
\maketitle

\begin{abstract}
We give an explicit formula for the action of the Dehn twist
along a simple closed curve  in a compact connected oriented surface
 on the completion of the filtered skein modules.
To do this, we introduce filtrations
of the Kauffman bracket skein algebra and the Kauffman 
bracket skein modules on the surface.
\end{abstract}

\section{Introduction}

Recently it has come to light that the Goldman Lie algebra of a surface plays an 
important role in the study of the mapping class group of the surface.
See \cite{Kawazumi}, \cite{KK} and \cite{MT} for details.
Before that, Turaev \cite{Turaev} drew an analogy between the 
Goldman Lie algebra and some skein algebra.
Hence it is important to establish some explicit
connection between the Kauffman bracket skein algebra and the mapping class group.
This new connection motivates much of the interest in 
the mapping class group of a surface and the link theory.
In fact, skein algebras give us a new way of
studying the mapping class group.
Furthermore, we expect that this connection
will bring us some information about $3$-manifolds
including the Casson invariant.

The aim of this paper is to explain a new relationship between the Kauffman bracket 
skein algebra and the mapping class group. Let $\Sigma$ be a 
compact connected oriented surface with non-empty boundary. Kawazumi-Kuno \cite{KK}
\cite{Kawazumi} defined an action $\sigma$ of the Goldman Lie algebra on
the group ring of the fundamental group of $\Sigma$.
Using this action, Kawazumi-Kuno \cite{KK} \cite{Kawazumi} and
Massuyeau-Turaev \cite{MT} proved the formula for the action of the 
Dehn twist $t_c$ along a simple closed curve $c$
\begin{equation}
\label{KawazumiKuno}
t_c =\exp (\sigma(\frac{1}{2} \zettaiti{(\log(c))^2})) :\widehat{\Q \pi_1(\Sigma)} \to
\widehat{\Q \pi_1 (\Sigma)}.
\end{equation}
Our goal in this paper is to establish a skein algebra version of this formula.

Let $\Sigma$ be a compact connected oriented surface, 
$I$ the closed interval $[0,1]$ and $\Q [A, \gyaku{A}] $
the ring of Laurent polynomials over $\Q$ in an indeterminate $A$.
The Kauffman bracket skein algebra $\skein{\Sigma}$ is defined to be
the quotient of the free $\Q [A,\gyaku{A}]$-module
with basis the set of unoriented framed links in $\Sigma \times I$
by the skein relation which defines the Kauffman bracket.
 Let $J$ be a finite subset of $\partial \Sigma$.
The Kauffman bracket skein module $\skein{\Sigma,J}$ is defined to be 
the quotient of the free $\Q [A, \gyaku{A}]$-module with basis $\mathcal{T}(\Sigma,J)$,
where we denote by $\mathcal{T}(\Sigma,J)$ the set of unoriented framed tangles 
with the base point set $J \times \shuugou{\frac{1}{2}}$.
For details, see the subsection \ref{subsection_define_skein_module}.
The Kauffman skein algebra $\skein{\Sigma}$ has the structure of an associative algebra and 
a Lie algebra over $\Q[A, \gyaku{A}]$.
 The Kauffman bracket skein module $\skein{\Sigma,J}$ has the structure
of a $\skein{\Sigma}$-bimodule. Furthermore, we define the action $\sigma$
of $\skein{\Sigma}$ on $\skein{\Sigma,J}$ such that $\skein{\Sigma,J}$ is 
$\skein{\Sigma}$-module under the action $\sigma$ when we regard 
$\skein{\Sigma}$ as a Lie algebra. For details, see the subsection
 \ref{subsection_Poisson_structure}.
In this paper, we introduce the filtration $\filtn{F^n \skein{\Sigma}}$ of 
$\skein{\Sigma}$ and the filtration $\filtn{F^n \skein{\Sigma,J}}$ of 
$\skein{\Sigma,J}$ defined by an  augmentation ideal.
These operations are continuous
in the topologies of $\skein{\Sigma}$ and $\skein{\Sigma,J}$ induced by
these filtrations. 
We remark that there is some relationship between the completion of
the group ring of  the fundamental group of $\Sigma$ and these filtrations
of $\skein{\Sigma}$ and $\skein{\Sigma,J}$ which will appear in \cite{TsujiCSAII}.
We denote the completions of $\skein{\Sigma} $ and
$\skein{\Sigma,J}$ in these topologies by $\widehat{\skein{\Sigma}}$
and $\widehat{\skein{\Sigma,J}}$ respectively. For details, see  the subsection
\ref{subsection_filtrations_and_completions}.
The main result of the paper is the formula 
for the action of the Dehn twist along a simple closed curve $c$
\begin{equation*}
t_c = \exp (\frac{-A+\gyaku{A}}{4 \log (-A)} (\arccosh   (-\frac{c}{2}))^2) 
:\widehat{\skein{\Sigma,J}} \to \widehat{\skein{\Sigma,J}}.
\end{equation*}
which is a skein version of  the formula (\ref{KawazumiKuno}).
Here $\log(-A) =\sum_{i=1}^\infty \frac{-1}{i}(A+1)^i \in \Q [[A+1]]$
and $(\arccosh (\frac{-c}{2}))^2 =\sum_{i=0}^\infty\frac{i! i!}{(i+1)(2i+1)!}(1-\frac{c^2}{4})^{i+1}
 \in \Q [[c+2]]$.
This skein version does not follow from the original one 
\cite{Kawazumi} \cite{KK} \cite{MT}.

In section 5, we prove the following three propeties.
\begin{enumerate}
\item Let $\Sigma$ be a compact connected oriented surface
with non-empty boundary.
The topology on $\skein{\Sigma,J}$ introduced by the filtration
is Hausdorff, in other words, we have $\cap_{n=0}^\infty F^n \skein{\Sigma,J}=0$.
\item 
Let $\Sigma$ and $\Sigma'$ be two oriented compact connected surfaces
satisfying $\pi_1 (\Sigma) \simeq \pi_1 (\Sigma')$, 
$J$ and $J'$ finite subsets of $\partial \Sigma$ and
$\partial \Sigma'$, respectively,
satisfying $\sharp J = \sharp J'$.
There exists a diffeomorphism $\xi : (\Sigma \times I ,J \times I)
 \to (\Sigma' \times I, J' \times I)$.
Then we have $\xi (F^n \skein{\Sigma,J}) = F^n \skein{\Sigma' ,J'}$.
But the induced map $\xi :\skein{\Sigma} \to \skein{\Sigma'}$
does not seems to be an algebra homomorphism.
\item We have
\begin{equation*}
\sum_{L' \subset L} (-1)^{\zettaiti{L'}}(-2)^{-\zettaiti{L'}}[L'] \in (\ker \epsilon)^n
\end{equation*}
 for a link $L$ in $\Sigma \times I$ 
having components more than $n$,
where the sum is over all sublinks $\emptyset \subseteqq L' \subseteqq L$
 and $\zettaiti{L}$
the number of components of $L$.
In other words, for a link $L$ in $\Sigma \times I$,
$(-2)^{-\zettaiti{L'}}[L'] \mod (\ker \epsilon)^n$
is a finite type invariant of order $n$ in the sense of Le
\cite{Le1996} (3.2).
\end{enumerate}
We need the first property
to prove that the action of the mapping class group of
a  compact connected oriented 
surface with non-empty boundary
on the completed skein algebra of the surface
is faithful.
The second and third properties follows from
Lemma \ref{lemm_new_filtration}.
Using the second property and 
Lickorish's theorem \cite{skeins_and_handlebodies} (Theorem \ref{thm_Lickorish}),
we prove the first property.
In subsequent papers, we need all
the above properties.

In particular, we need the third property
to prove that the action of the mapping class group of
a  compact connected oriented 
surface with non-empty boundary
on the completed skein algebra of the surface.

In subsequent papers, using this formula of Dehn twists,
we obtain an embedding of the Torelli group
into the completed skein algebra defined in this paper.
This embedding gives a  construction of 
the first Johnson homomorphism and 
a new filtration 
consisting of normal subgroups in
the mapping class group.
Furthermore, it gives an invariant for integral homology $3$-spheres
inducing a finite type invariants of any order $n$.
The details will appear  elsewhere \cite{TsujiCSAII}.

\bigskip
\subsection*{Acknowledgment}

The author is grateful to his adviser, Nariya Kawazumi, for helpful discussion
and encouragement and thanks to Kazuo Habiro, Gw\'{e}na\"{e}l Massuyeau, Jun Murakami
and Tomotada Ohtsuki for helpful comments.
In particular, the author would like to thank to Gw\'{e}na\"{e}l Massuyeau
for reading the author's draft and valuable comments to revise it.
This work was supported by JSPS KAKENHI Grant Number 15J05288.

\tableofcontents

\section{Definition of tangles in $\Sigma \times I$}
\label{section_def_tangles}

In this section, let $\Sigma $ be a compact connected oriented surface.

We define the set of tangles in $\Sigma \times I$.

\begin{df}

Let $J$ be a finite subset of $\partial \Sigma $.
We define $\mathcal{E} (\Sigma, J)$ to be the set consisting
of all the 
injective map  $E = \coprod_{i}\tau_i \sqcup \coprod_j \upsilon_j$
from a domain $D$ consisting of a finite collection of strips 
$\coprod_i I \times (-\epsilon,\epsilon)$ and annuli
$\coprod_j S^1 \times (-\epsilon,\epsilon)$ into $\Sigma \times (0,1)$
satisfying the following four conditions.

\begin{enumerate}
\item Each $\upsilon_j$ is an embedding into $\Sigma \times (0,1)$.
\item The restriction of each $\tau_i$ to $(0,1) \times (-\epsilon,\epsilon)$
is an embedding into $\Sigma \times (0,1)$.
\item The restriction of each $\tau_i$ to $\shuugou{0,1} \times (-\epsilon,
\epsilon)$  is an orientation preserving embedding into $J \times I$.
\item For $j \in J $, $\tau (D) \cap (j \times I)$ is not empty and is connected.
\end{enumerate}

Two elements $E_0$ and $E_1$ of $\mathcal{E} (\Sigma, J)$
which have same domain $D$
are unoriented-isotopic if there exists a continuous map
$H : D \times I \to \Sigma \times I $ such that $H(D \times \shuugou{0})
 =E_0(D ) $,
 $H(D \times \shuugou{1})=E_1(D)$ and $H(\cdot ,t) \in 
\mathcal{E}(\Sigma, J)$ for $t \in I$. We denote by $\mathcal{T}(\Sigma, J)$ 
the set of 
unoriented-isotopy
classes of elements of $\mathcal{E} (\Sigma, J)$.
We denote by $\kukakko{\cdot}$ the quotient map $\mathcal{E} (\Sigma, J) \to 
\mathcal{T}(\Sigma,  J)$.
If $J =\emptyset$, we simply denote $\mathcal{T}(\Sigma, J)$ and 
$\mathcal{E}(\Sigma, J)$ by 
$\mathcal{T}(\Sigma)$ and $\mathcal{E}(\Sigma)$.
\end{df}

The definition of `tangles' is similar to the definition of `link'
of marked surfaces in \cite{Mu2012}.
But, a tangle in this definition has one arc on each point of $J$.

\begin{df}

Let $J$ be a finite subset of $\partial \Sigma$.
An element $E$ of $\mathcal{E}(\Sigma, J)$ is
generic if $E :(\coprod_i I
 \sqcup \coprod_j S^1) \times (-\epsilon,\epsilon) \to \Sigma \times I$
 satisfies the following two conditions.
 
 \begin{enumerate}
 \item For $x \in \coprod_i I \sqcup \coprod_j S^1$,
 $(-\epsilon, \epsilon) \to I, t \mapsto p_2 \circ E(x,t)$ is an orientation preserving
 embedding map, where we denote by $p_1$ the projection 
$\Sigma \times I \to I $.
 \item $\coprod_i I \sqcup \coprod_j S^1 \to \Sigma, x \mapsto p_1 \circ
E (x,0)$ is an immersion such that the intersections of the image consist of 
transverse double points, where we denote by $p_2$ the projection
$\Sigma \times I \to \Sigma$.
\end{enumerate}

 \end{df}

It is convenient to present tangles in $\Sigma \times I$
by tangle diagrams on $\Sigma$ in the same fashion
in which links in $\R^3$ may be presented by planar link diagrams.

\begin{df}

Let $J$ be a finite subset of $\partial \Sigma$,
$T$ an element of $\mathcal{T}(\Sigma,J)$ and 
$E :(\coprod_i I
 \sqcup \coprod_j S^1) \times (-\epsilon,\epsilon) \to \Sigma \times I$
 an element of $\mathcal{E}(\Sigma,J)$ such that
$E$ is generic and that $\kukakko{E} =T$.
The tangle diagram of $T$ is $p_1 \circ  E ((\coprod_i I \sqcup \coprod_j
S^1 ) \times \shuugou{0})$
together with height-information, i.e., the choice of the upper branch
of the curve at each crossing.
The chosen branch is called an over crossing; the other branch is 
called an under crossing.

\end{df}

\begin{prop}[see, for example, \cite{BuZi85}]

Let $J$ be a finite subset of $\partial \Sigma$.
Let $T$ and $T'$ be two elements of $\mathcal{T}(\Sigma, J)$
and $d$ and $d'$ tangle diagrams of them respectively. Then, $T$ equals 
$T'$ if and only if $d$ can be transformed into $d'$ by a sequence of isotopies of 
$\Sigma$ and the RI, RII, RIII moves shown in Figure \ref{fig:RI},
\ref{fig:RII}, and \ref{fig:RIII}.

\end{prop}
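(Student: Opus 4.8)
The plan is to reduce the statement to the classical Reidemeister theorem by a general position argument, treating the two implications separately. For the ``if'' direction I would show that each admissible modification of the diagram lifts to an isotopy of tangles: an isotopy of $\Sigma$ extends to an ambient isotopy of $\Sigma \times I$ preserving all the conditions defining $\mathcal{E}(\Sigma, J)$ and fixing a neighbourhood of $J \times I$, while each of RI, RII, RIII is supported in a small disk of $\Sigma$ and is realized by an ambient isotopy of $(\text{disk}) \times I$ disjoint from the rest of the tangle. Composing these shows that if $d$ is carried to $d'$ by isotopies of $\Sigma$ and the three moves then $T = T'$.

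For the ``only if'' direction, suppose $T = T'$, so that there is a family $H(\cdot, t) \in \mathcal{E}(\Sigma, J)$, $t \in I$, with $\kukakko{H(\cdot, 0)} = T$, $\kukakko{H(\cdot, 1)} = T'$ and both ends generic. First I would make $H$ smooth and perturb it, rel $t \in \shuugou{0,1}$ and rel its prescribed behaviour on $J \times I$, so that the projected family $\gamma_t = p \circ H(\cdot, t)$ of core curves in $\Sigma$ (with $p$ the projection $\Sigma \times I \to \Sigma$) is a generic one-parameter family of immersed multicurves, the height coordinate keeping the over/under data well defined throughout. This is an instance of Thom multijet transversality applied to the family, arranged so as not to change the tangle type at the two ends.

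Genericity then yields finitely many exceptional parameters $0 < t_1 < \cdots < t_k < 1$ at each of which $\gamma_t$ has exactly one codimension-one degeneracy: a point of vertical tangency (a cusp of the apparent contour), a non-transverse self-tangency of two branches, or an ordinary triple point. I would match these three local models with RI, RII and RIII respectively, using the height function to pin down the over- and under-strands, and observe that between consecutive exceptional parameters the diagram changes only by an isotopy of $\Sigma$. Concatenating the local pictures reads off a sequence of isotopies and RI, RII, RIII moves taking $d$ to $d'$.

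The main obstacle is the transversality step under the boundary constraints: since each $H(\cdot, t)$ must lie in $\mathcal{E}(\Sigma, J)$, the perturbation has to respect the conditions at $J \times I$ and remain an embedding in the $I$-direction. I would deal with this by first using a collar of $\partial \Sigma$ to arrange that near $J \times I$ the entire family is the standard product of straight arcs, so that no degeneracy occurs there and every perturbation can be supported in the interior. A second delicate point is the framing: the core curves carry the blackboard framing induced by the strips and annuli, so the cusp event must be reconciled with the framed form of the RI move in Figure \ref{fig:RI}, the combination of height and framing data ensuring that the realized move has the stated effect. Once these boundary and framing issues are settled, checking that the three local models are exactly RI, RII and RIII is routine and identical to the planar case treated in \cite{BuZi85}.
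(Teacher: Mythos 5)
The paper does not actually prove this proposition: it is quoted from the literature, and the cited source \cite{BuZi85} establishes the Reidemeister theorem by a piecewise-linear argument ($\Delta$-moves on polygonal representatives). Your smooth route --- make the isotopy generic by multijet transversality, classify the codimension-one events of the projected family as cusps, tangencies and triple points, and read these off as the three moves, with a collar argument to keep everything away from $J\times I$ --- is the standard alternative proof, and the boundary bookkeeping you describe is fine. So the overall architecture is sound and would be an acceptable substitute for the citation, except for one point.

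The genuine gap is the framing, which you flag as a ``delicate point'' but do not resolve, and it is exactly the point where the framed statement differs from the classical unframed one. The tangles of this paper are strips and annuli; a generic representative has vertical strips, so a diagram determines the tangle through the blackboard framing. Consequently the unframed move RI (kink $\leftrightarrow$ straight strand) cannot be among the allowed moves: resolving the kink by the Kauffman relation gives $[\mathrm{kink}]=-A^{\mp3}[\mathrm{straight}]$, so if a kinked and a straight diagram presented the same tangle, then $(1+A^{\mp3})[T]=0$ for every $T$, contradicting the freeness of $\skein{\Sigma,J}$ in Theorem \ref{thm_przy}. Hence the move of Figure \ref{fig:RI} must be a framed form of RI (a curl traded across the strand, or cancellation of opposite curls), whereas the cusp event in your analysis of the projected \emph{core curves} produces precisely the unframed RI. Indeed, at the cusp moment the core has a vertical tangent, so the strip cannot remain vertical there (a vertical strip over a vertically tangent core is not even an immersion); the blackboard-framing correspondence between diagrams and tangles breaks exactly at such an event, and the event therefore does not translate directly into an allowed move on diagrams. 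Closing this requires a genuinely additional argument: either run the transversality analysis on the strips themselves rather than their cores, or prove the unframed theorem first and then restore the framing by a rotation-number/writhe count per component, as in the standard derivation of the framed Reidemeister theorem. Your sentence that ``the combination of height and framing data ensures the realized move has the stated effect'' asserts rather than proves this step; note also that for the unframed RI your ``if'' direction would be false in the framed category, so the whole proof hinges on this unresolved point.
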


\begin{figure}
\begin{picture}(300,80)
\put(0,0){\includegraphics[width=60pt]{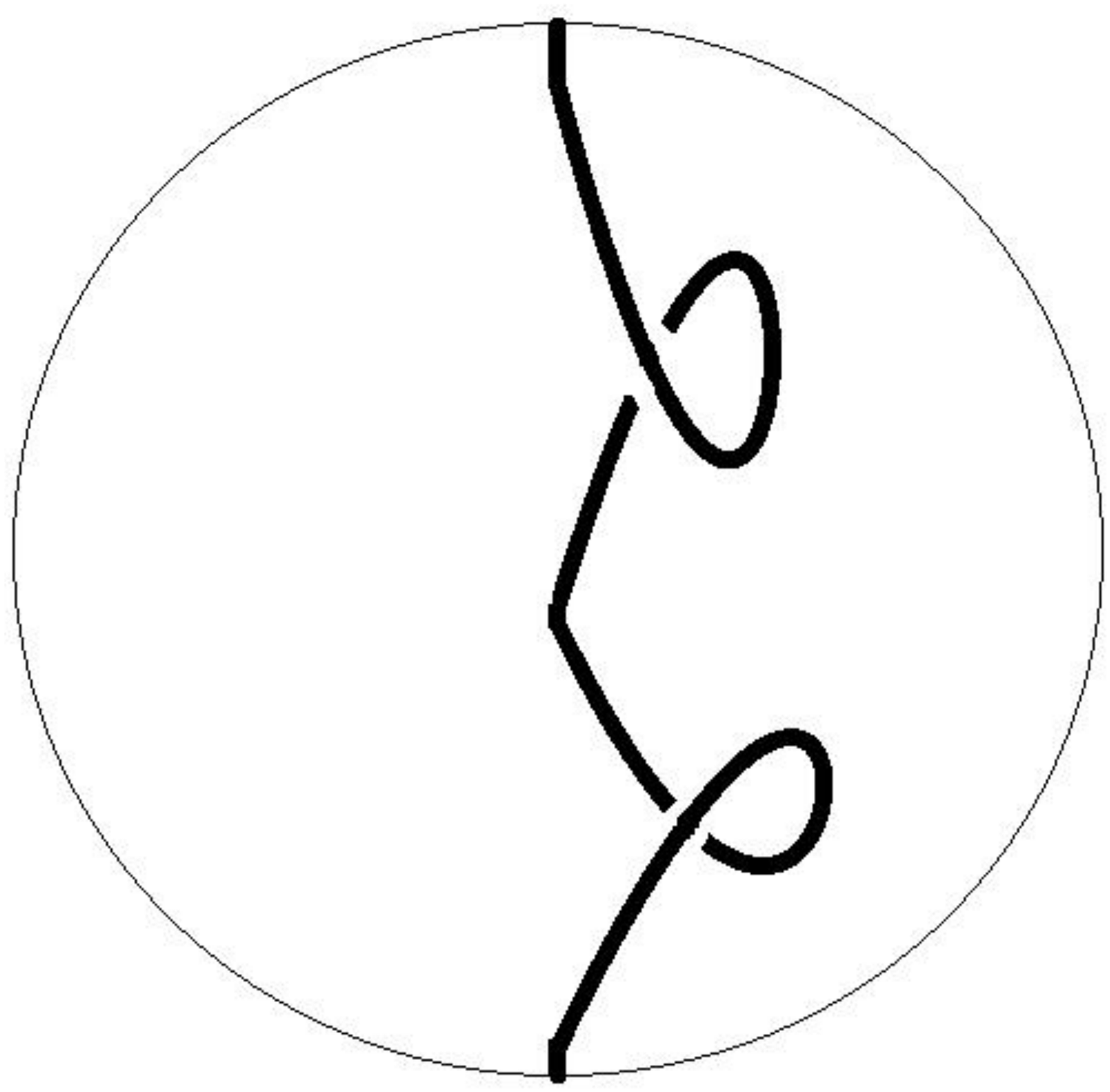}}
\put(70,0){\includegraphics[width=60pt]{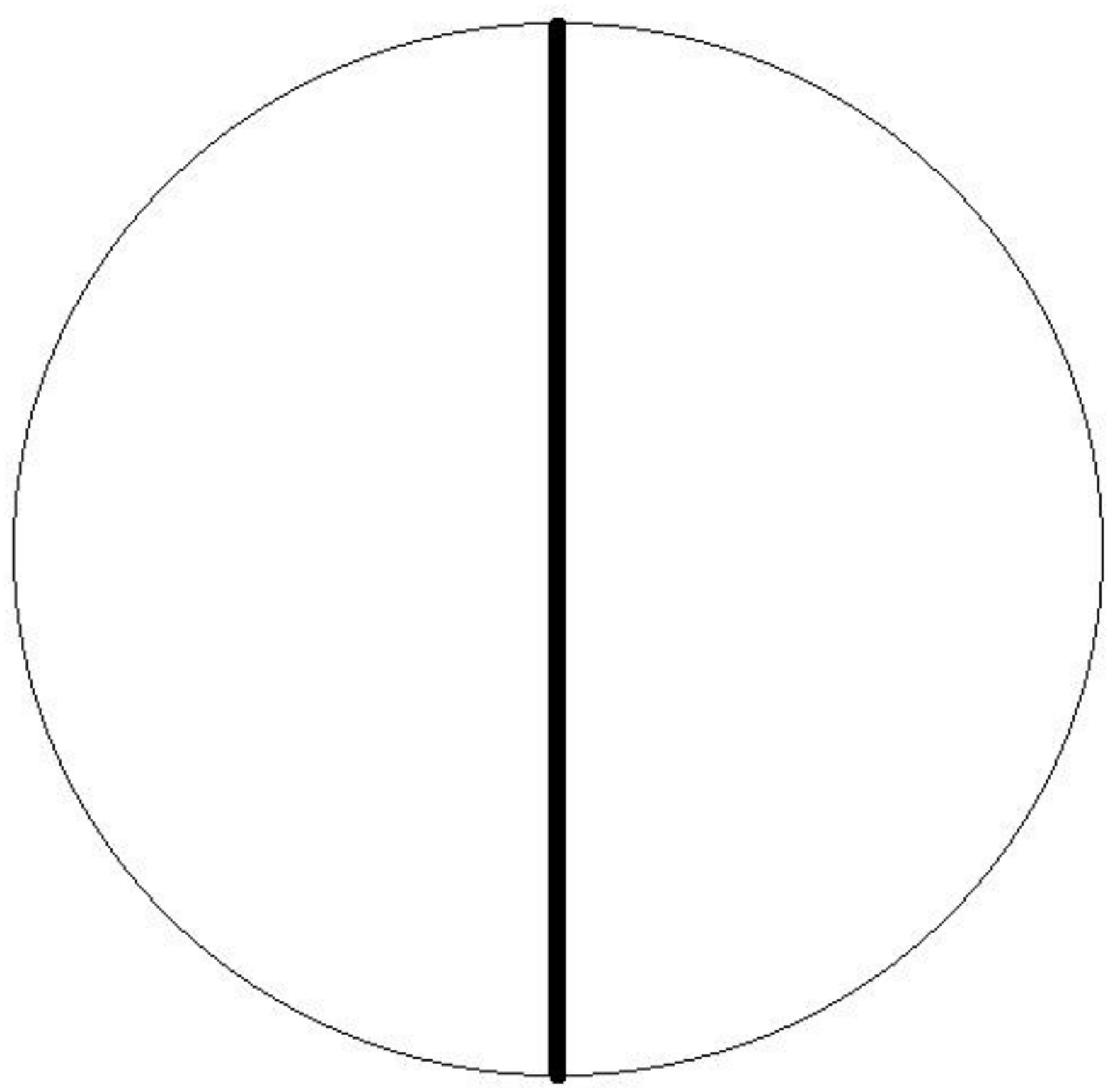}}
\put(140,0){\includegraphics[width=60pt]{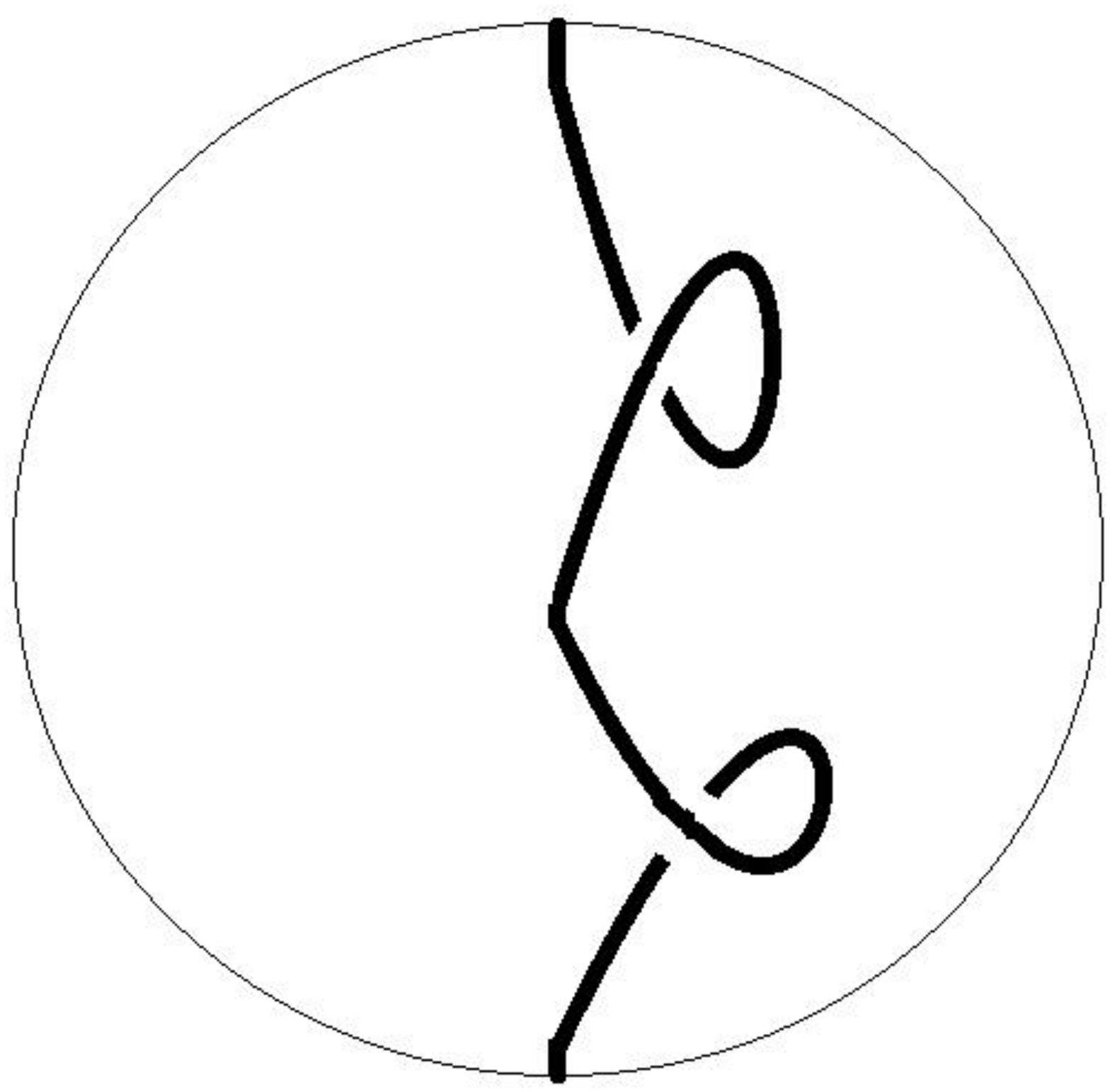}}
\put(60,30){\LARGE{$\leftrightarrow$}}
\put(130,30){\LARGE{$\leftrightarrow$}}
\end{picture}
\caption{RI: Reidemester move I}
\label{fig:RI}
\end{figure}

\begin{figure}
\begin{tabular}{rr}
\begin{minipage}{0.5 \hsize}

\begin{picture}(300,80)
\put(0,0){\includegraphics[width=60pt]{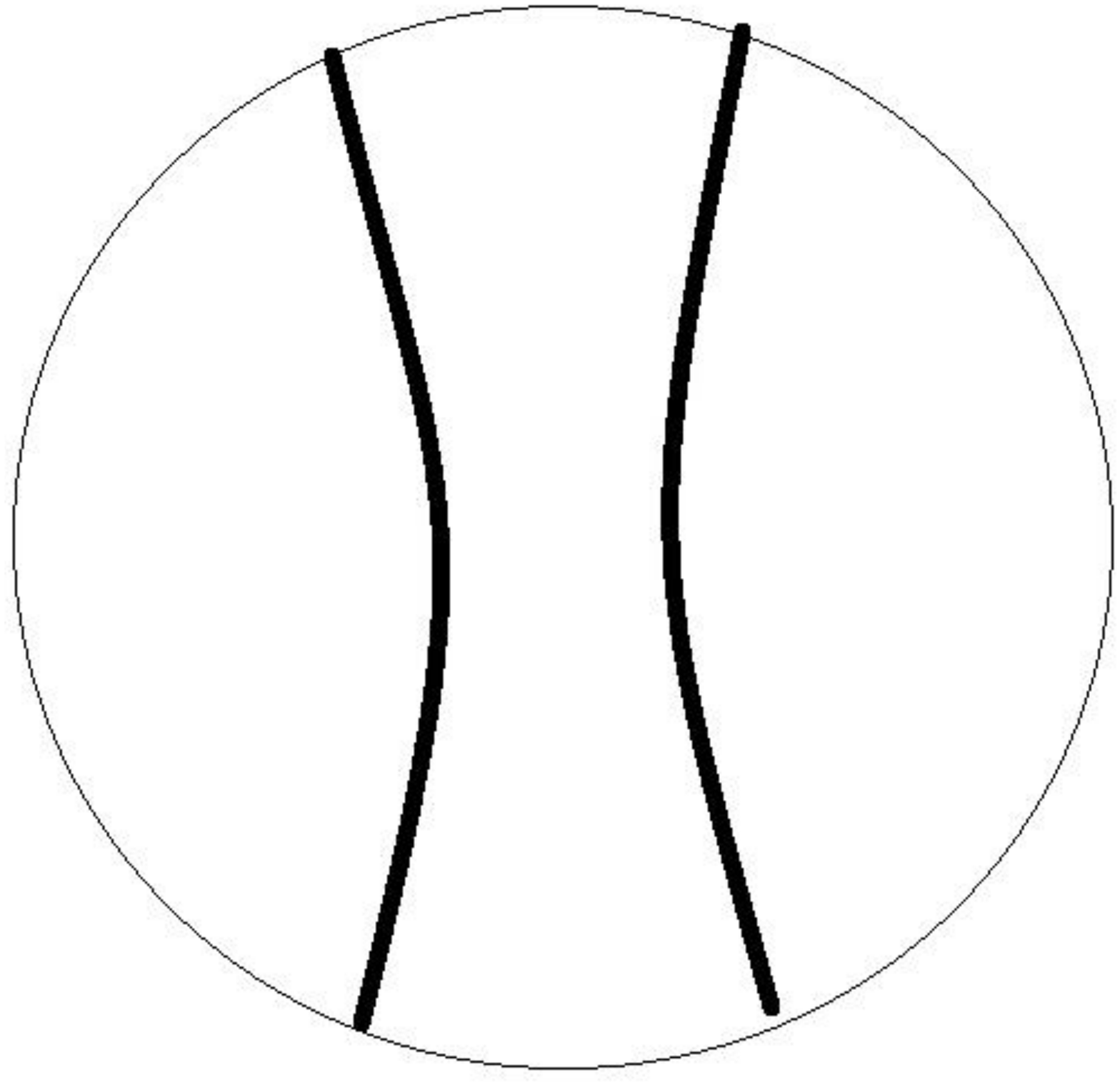}}
\put(70,0){\includegraphics[width=60pt]{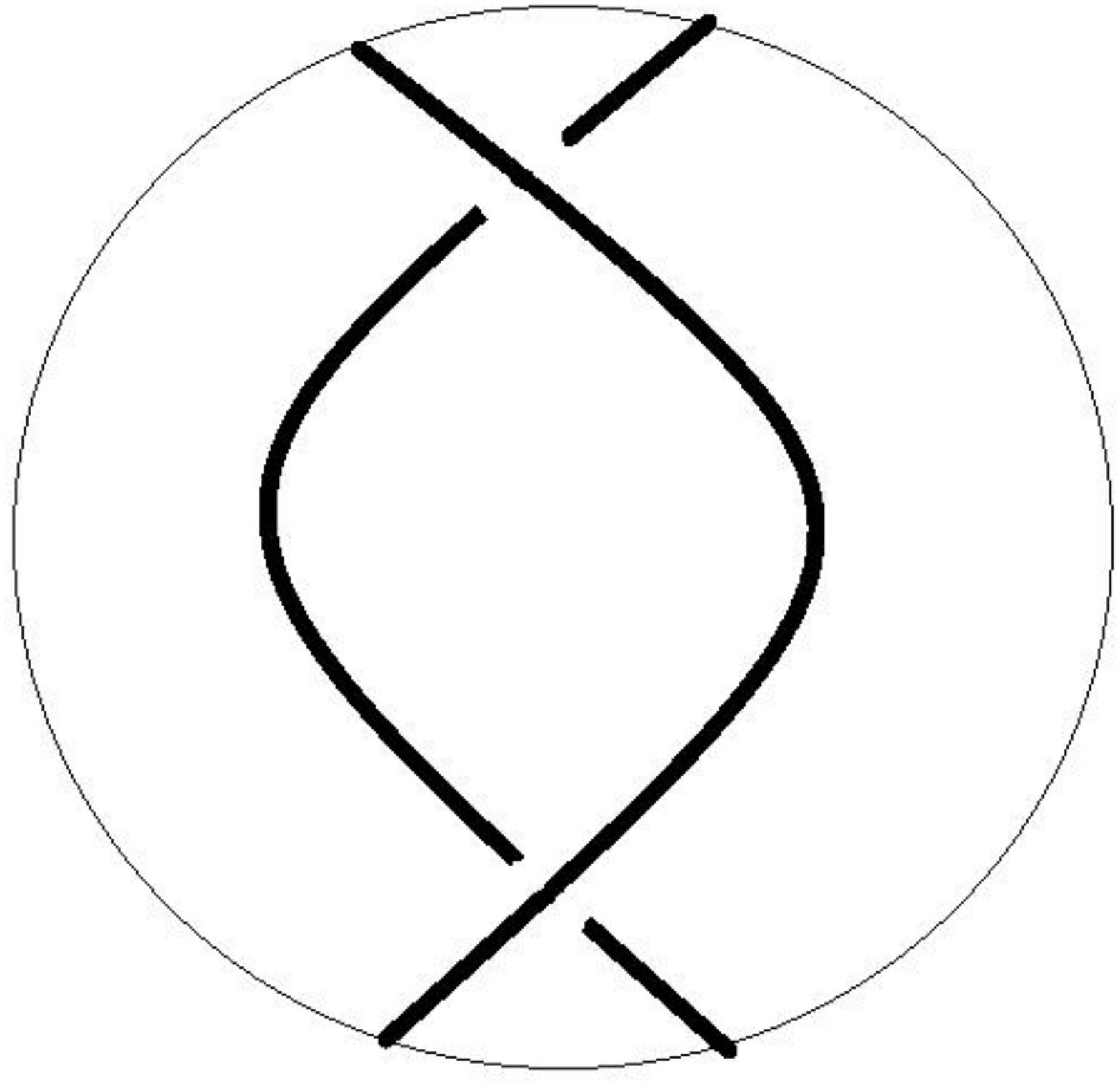}}
\put(60,30){\LARGE{$\leftrightarrow$}}
\end{picture}
\caption{RII: Reidemeister move II}
\label{fig:RII}

\end{minipage}

\begin{minipage}{0.5 \hsize}

\begin{picture}(300,80)
\put(0,0){\includegraphics[width=60pt]{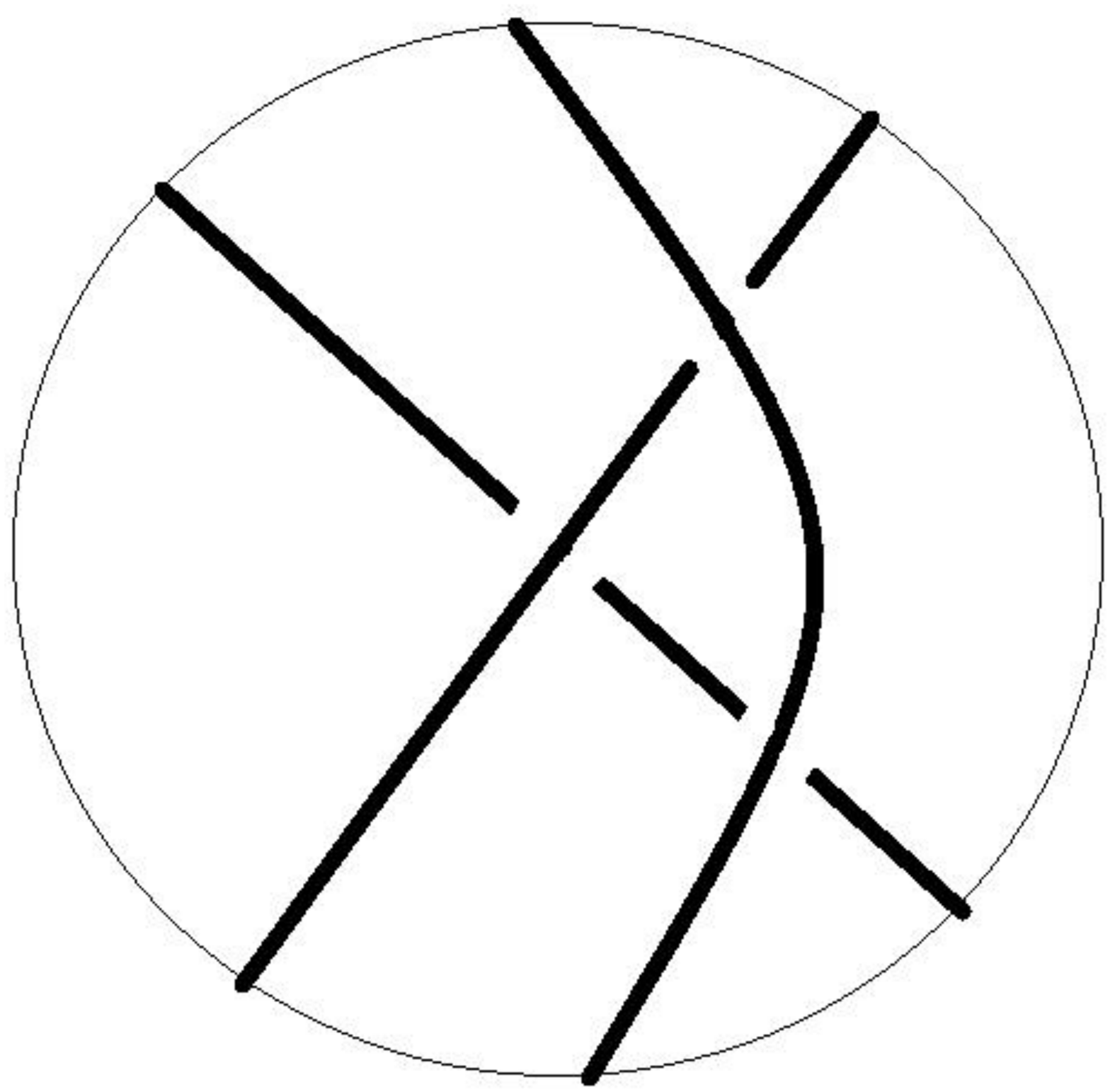}}
\put(70,0){\includegraphics[width=60pt]{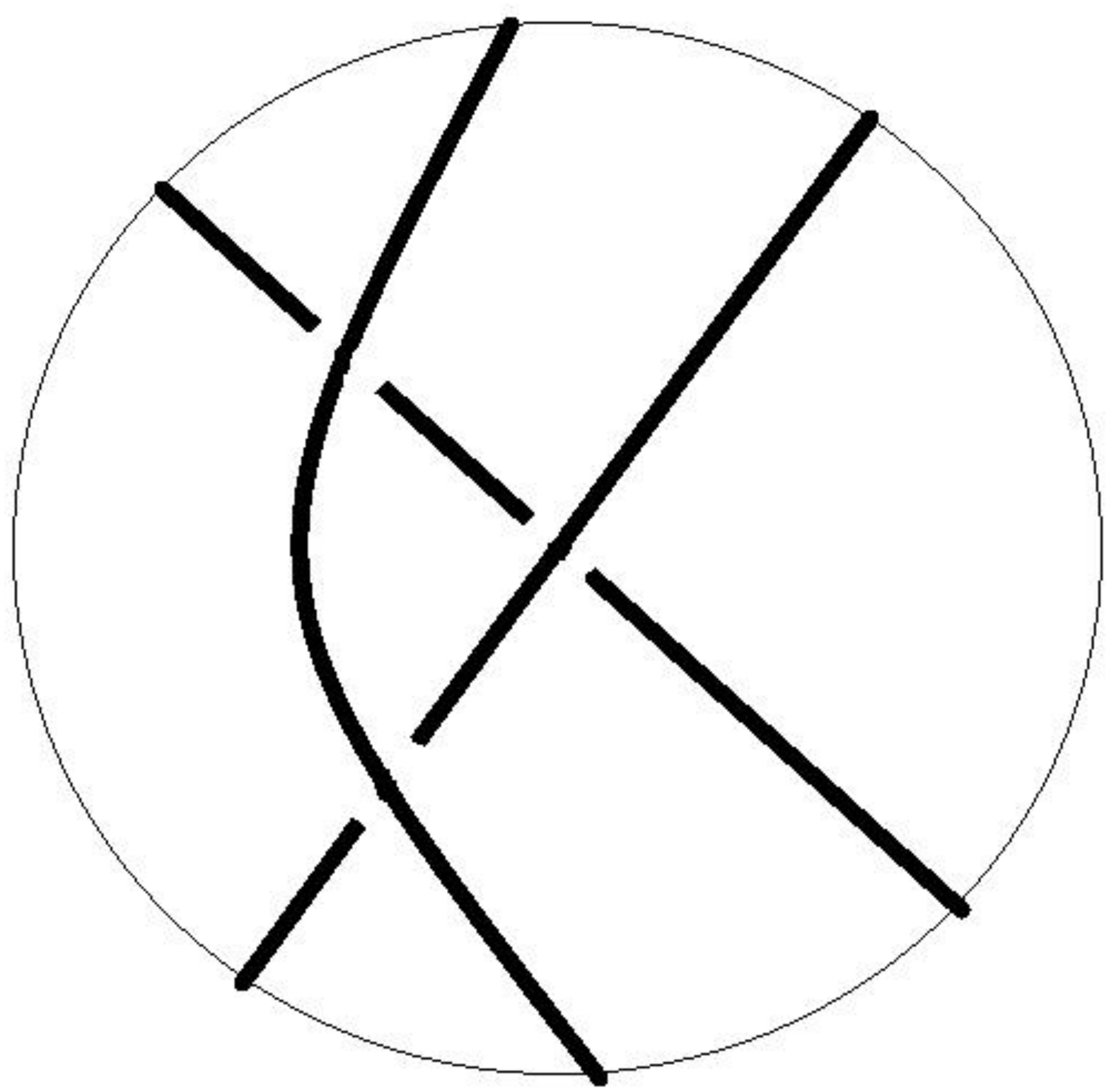}}
\put(60,30){\LARGE{$\leftrightarrow$}}
\end{picture}
\caption{RIII: Reidemeister move III}
\label{fig:RIII}

\end{minipage}
\end{tabular}
\end{figure}

Let $J$ and $J'$ be two finite subsets of $\partial \Sigma$
with $J \cap J' =\emptyset$.
Here $e_1$ and $e_2$ denote the embedding maps from $\Sigma \times I$ to
$\Sigma \times I$ defined by $e_1(x,t) =(x,\frac{t+1}{2})$ and
$e_2(x,t)=(x,\frac{t}{2})$. We define $\boxtimes : \mathcal{T}(\Sigma, J)
\times \mathcal{T}(\Sigma, J') \to \mathcal{T}(\Sigma, J \cup J')$ by
\begin{equation*}
\kukakko{E} \boxtimes \kukakko{E'} \defeq \kukakko{e_1 \circ E \sqcup e_2 \circ E'}
\end{equation*}
for $E \in \mathcal{E}(\Sigma, J)$ and $E' \in \mathcal{E}(\Sigma, J')$.

Let $J$ be a finite subset of $\partial \Sigma$,
$T$ an element of $\mathcal{T}(\Sigma,J)$ 
represented by $E \in \mathcal{E}(\Sigma,J)$ and 
$\xi$ an element of $\mathcal{M}(\Sigma)$ represented by 
a diffeomorphism $\mathcal{X}_\xi$,
where we denote by $\mathcal{M}(\Sigma)$ the mapping class group of 
$\Sigma$ preserving the boundary.
We denote by $\xi T$ an element of 
$\mathcal{T}(\Sigma,J)$ represented by
$(\mathcal{X}_\xi \times \id_I)\circ E \in \mathcal{E}(\Sigma,J)$.

\section{Kauffman bracket skein modules}

\label{section_def_skein}

Throughout this section, 
let $\Sigma$ be a compact connected oriented surface.

\subsection{Definition of Kauffman bracket skein modules}
\label{subsection_define_skein_module}
In this subsection, we define Kauffman bracket skein modules.

First of all, we define a Kauffman triple.

\begin{df}
Let $J$ be a finite subset of $\partial \Sigma$.
A triple of three tangles $T_1$, $T_\infty$ and $T_0 \in \mathcal{T}(\Sigma,J)$ 
is a Kauffman triple if there exist $E_1$, $E_\infty$ and $E_0 \in 
\mathcal{E}(\Sigma,J)$ whose domains are $D_1$, $D_\infty$ and $D_0$
satisfying the following two conditions.
\begin{itemize}
\item We have $\kukakko{E_1} =T_1$, $\kukakko{E_\infty}=T_\infty$ and $\kukakko{E_0}
=T_0$.
\item The three images $E_1(D_1)$, $E_\infty (D_\infty)$ and $E_0 (D_0)$ are identical
except for some neighborhood of a point, where they differ as shown in the figure.

\begin{picture}(300,90)
\put(0,0){\includegraphics[width=60pt]{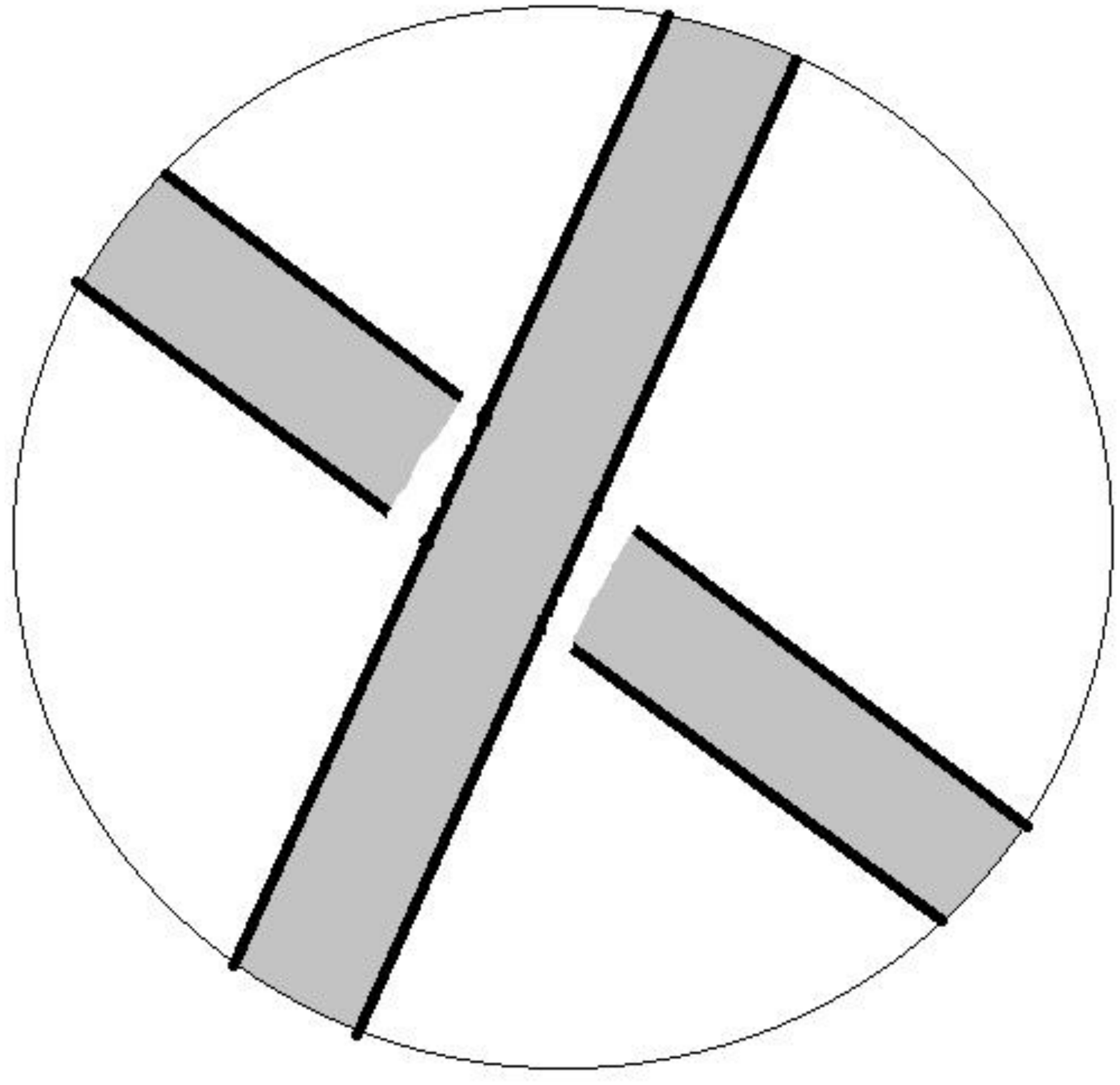}}
\put(70,0){\includegraphics[width=60pt]{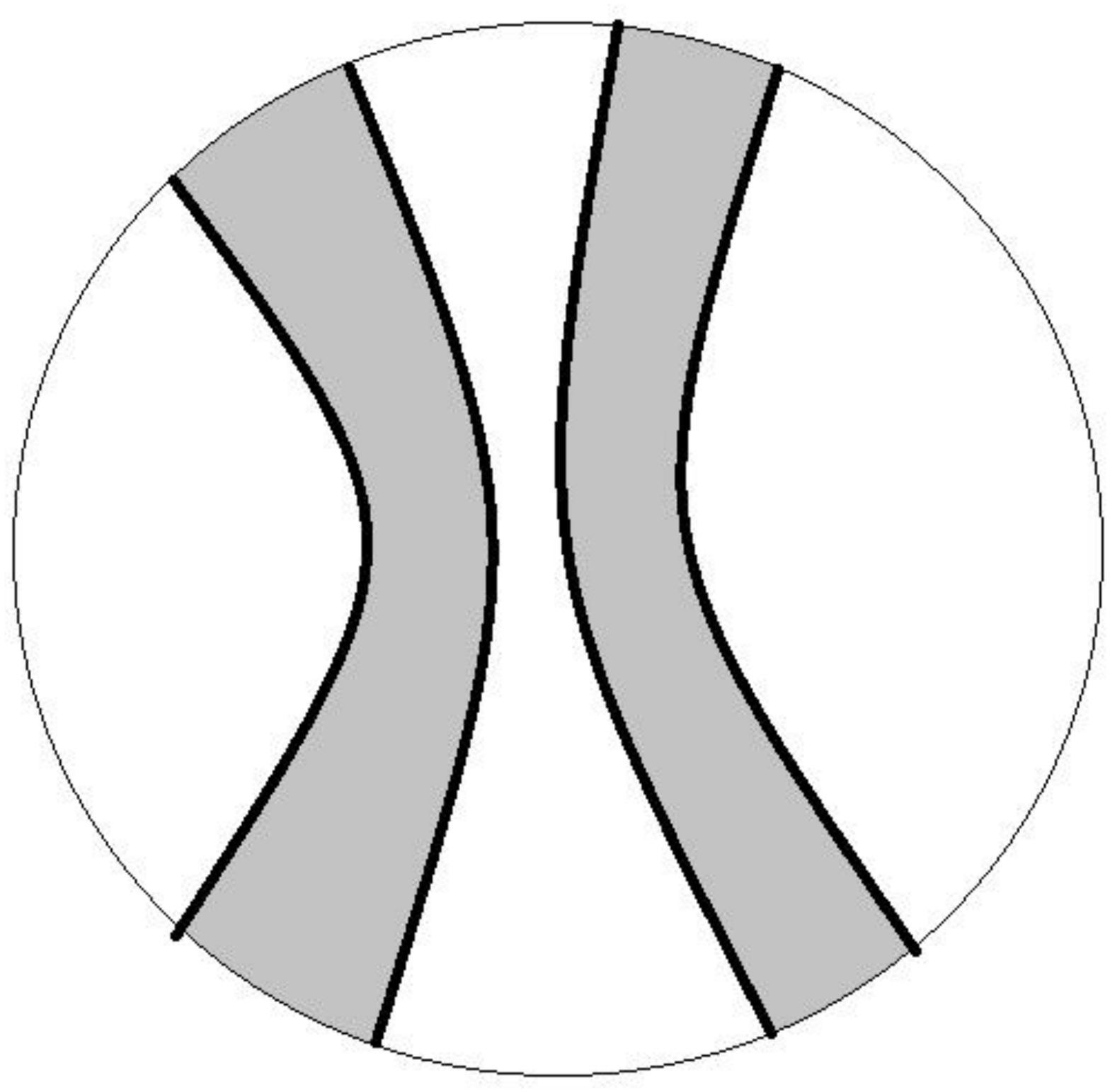}}
\put(140,0){\includegraphics[width=60pt]{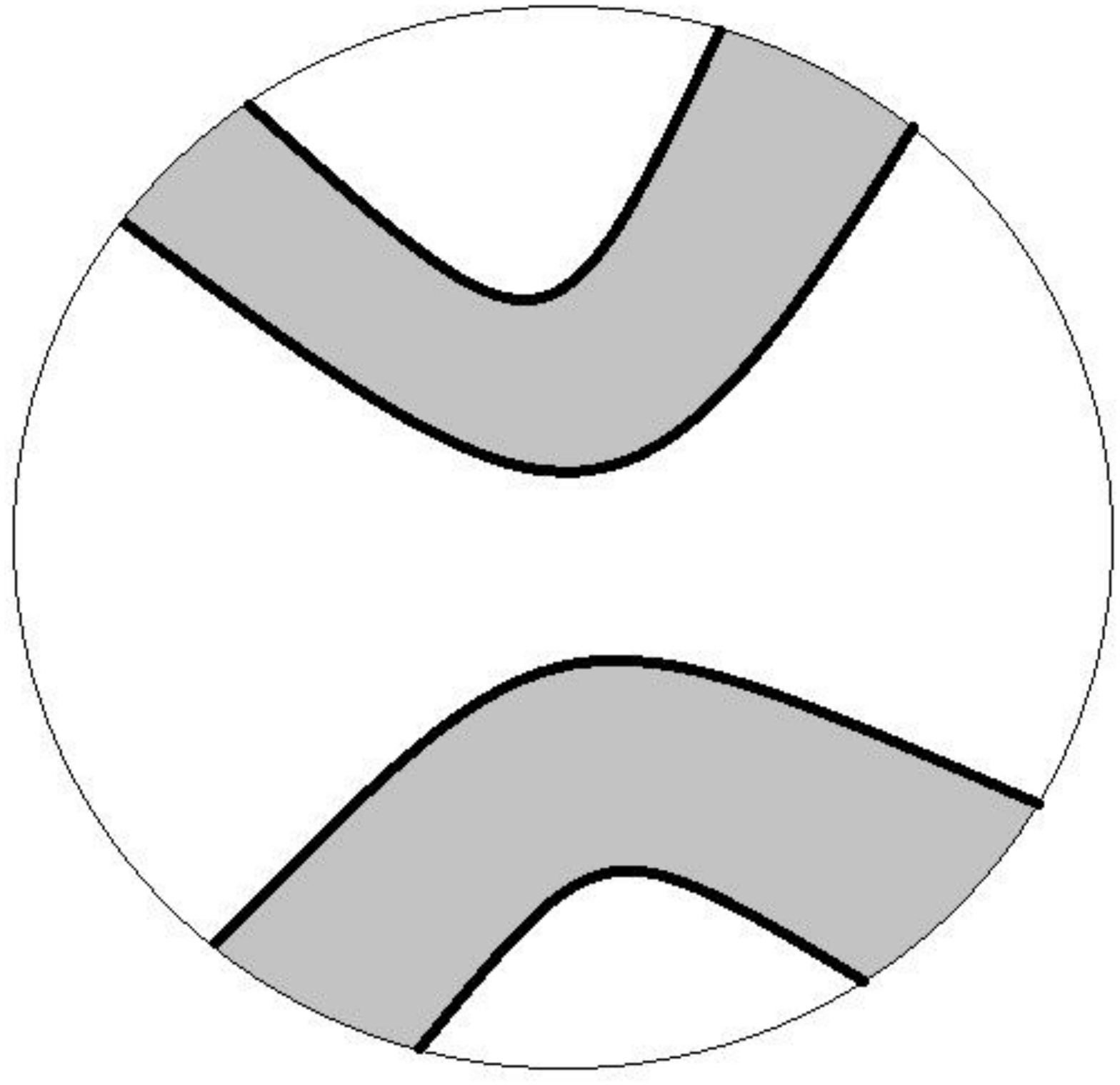}}
\put(10,76){$T_1 :E_1(D_1)$}
\put(80,76){$T_\infty :E_\infty (D_\infty)$}
\put(150,76){$T_0 :E_0(D_0)$}
\end{picture}
\end{itemize}

In other words, there exist three tangle diagrams $d_1$, $d_\infty$ and $d_0$
which present $T_1$, $T_\infty$ and $T_0$ respectively such that
$d_1$, $d_\infty$ and $d_0$ are identical
except for some neighborhood of a point, where they differ as shown in Figure 
\ref{fig_K2}, Figure \ref{fig_Kinfi} and Figure \ref{fig_K0} respectively.

\begin{figure}[htbp]
	\begin{tabular}{rrrr}
	\begin{minipage}{0.25\hsize}
		\centering
		\includegraphics[width=2cm]{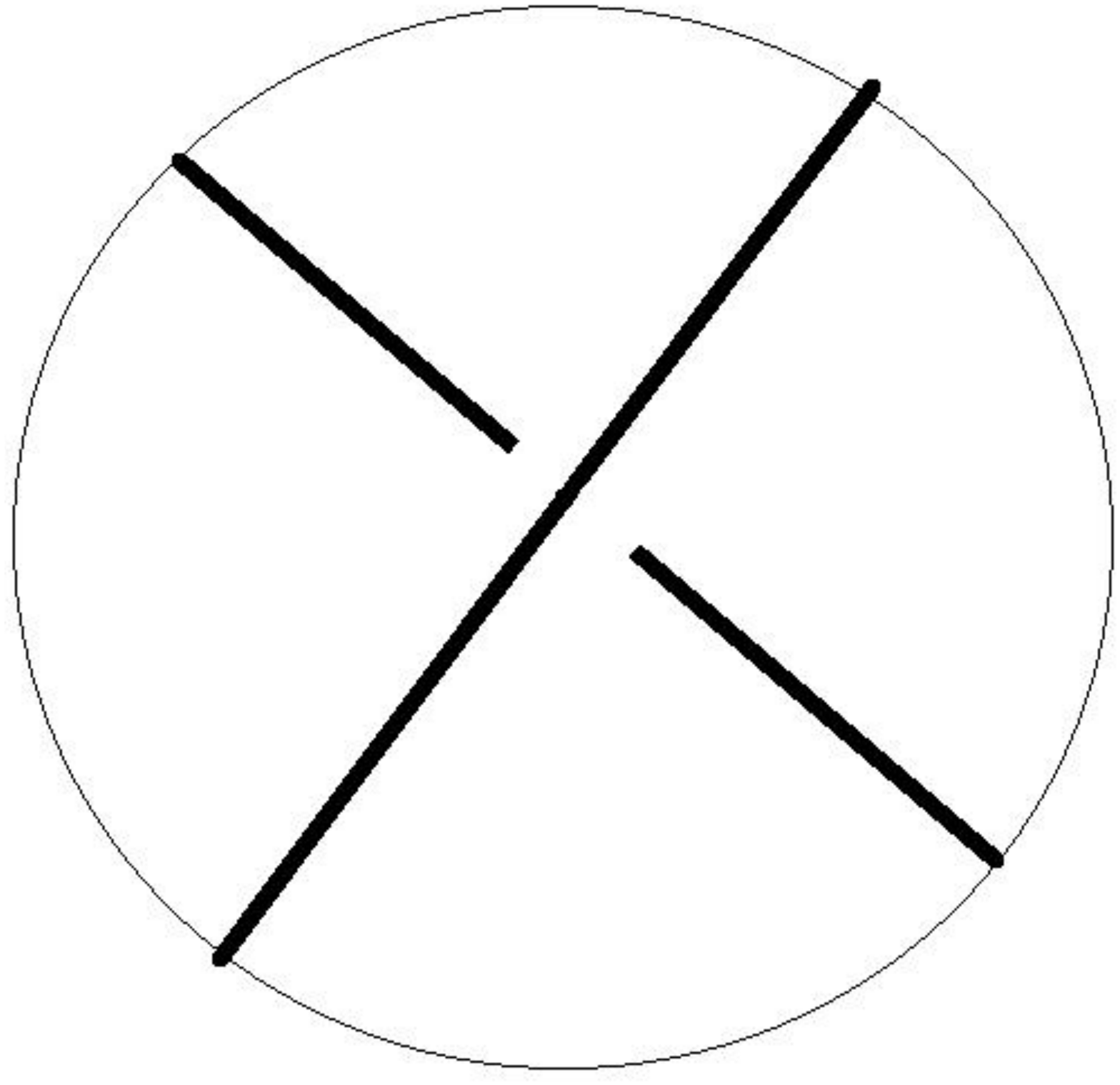}
		\caption{}
		\label{fig_K2}
	\end{minipage}
     \begin{minipage}{0.25\hsize}
		\centering
		\includegraphics[width=2cm]{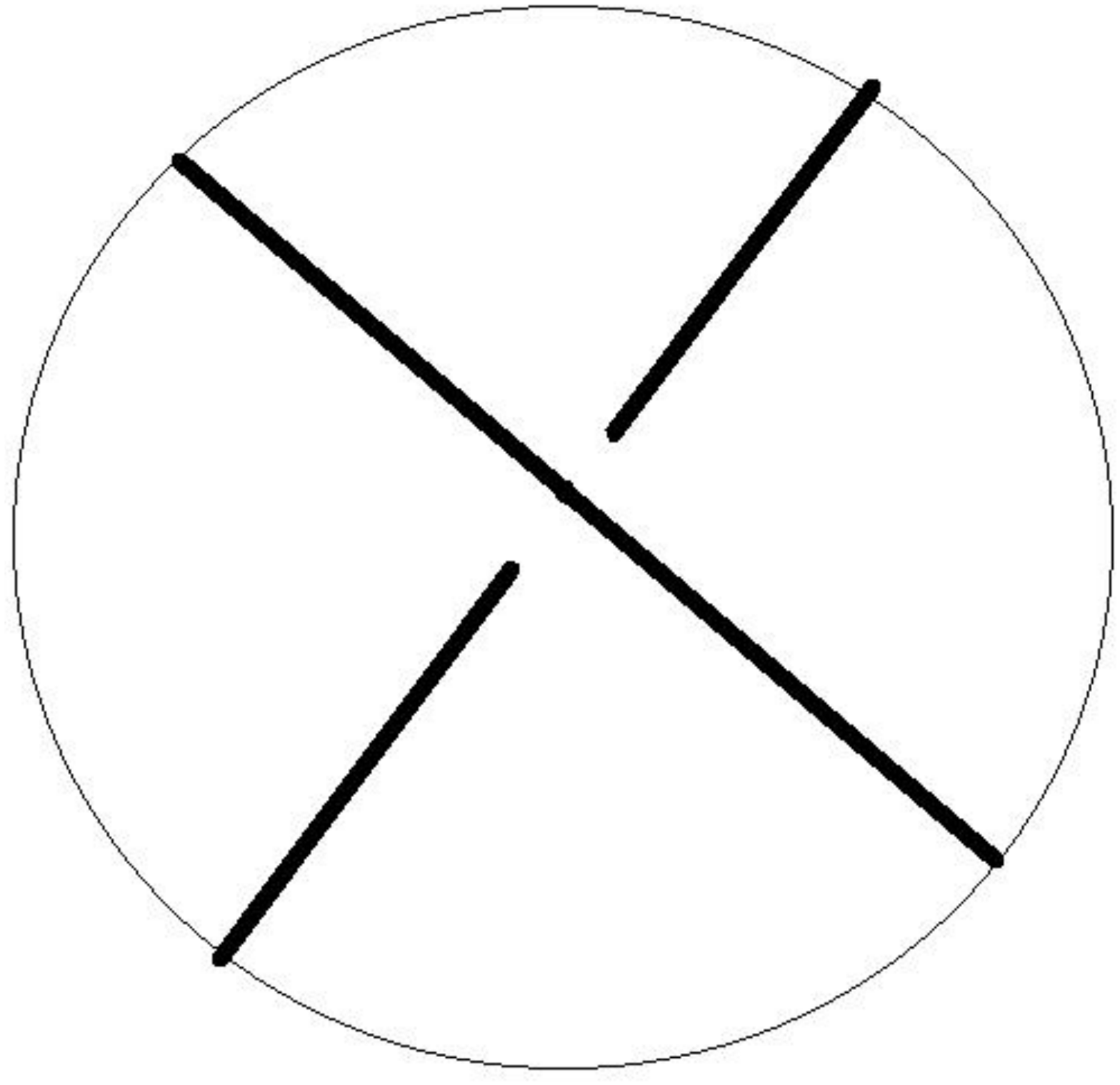}
		\caption{}
		\label{fig_K1}
	\end{minipage}
		\begin{minipage}{0.25\hsize}
		\centering
		\includegraphics[width=2cm]{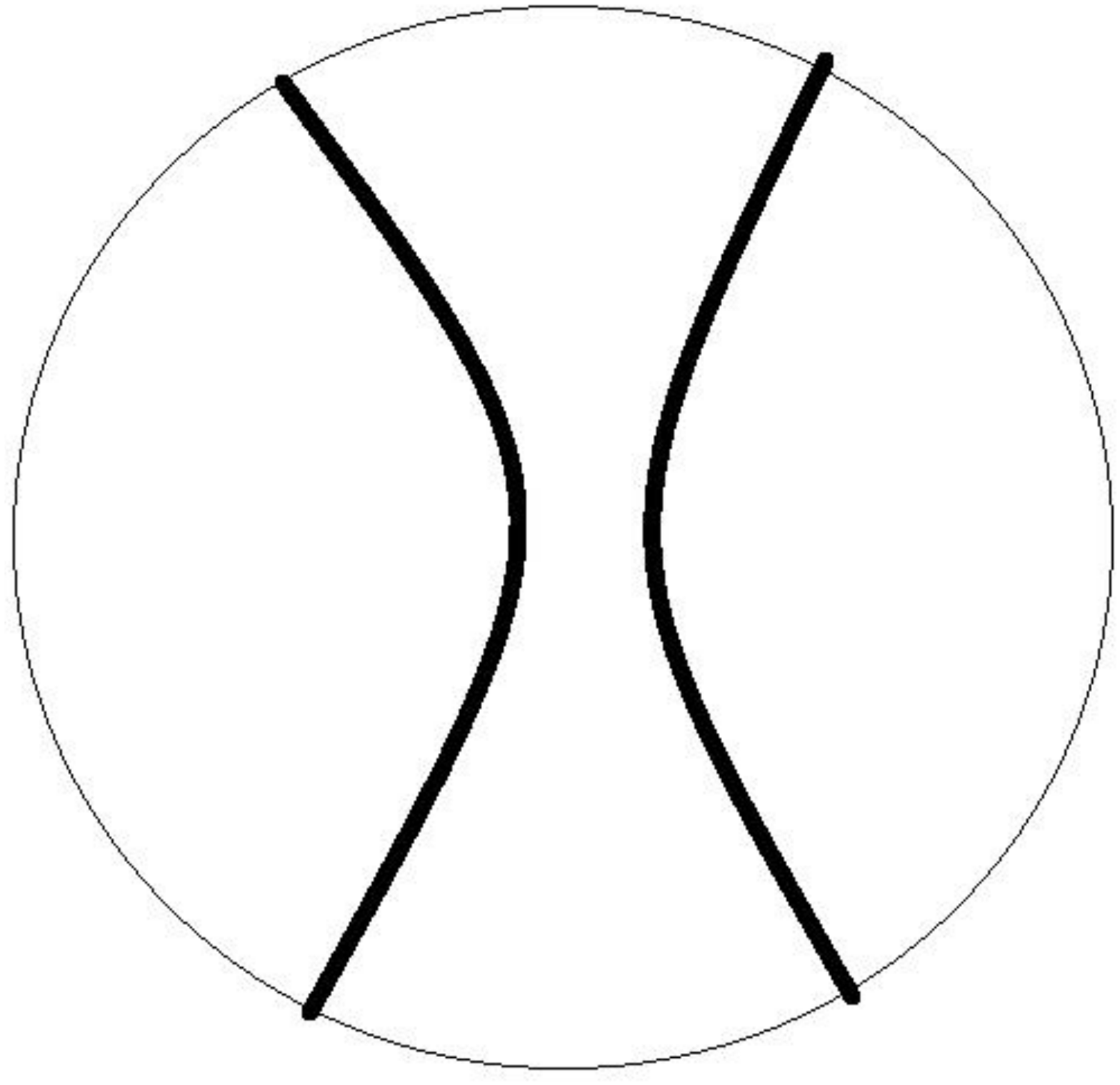}
		\caption{}
		\label{fig_Kinfi}
	\end{minipage}
		\begin{minipage}{0.25\hsize}
		\centering
		\includegraphics[width=2cm]{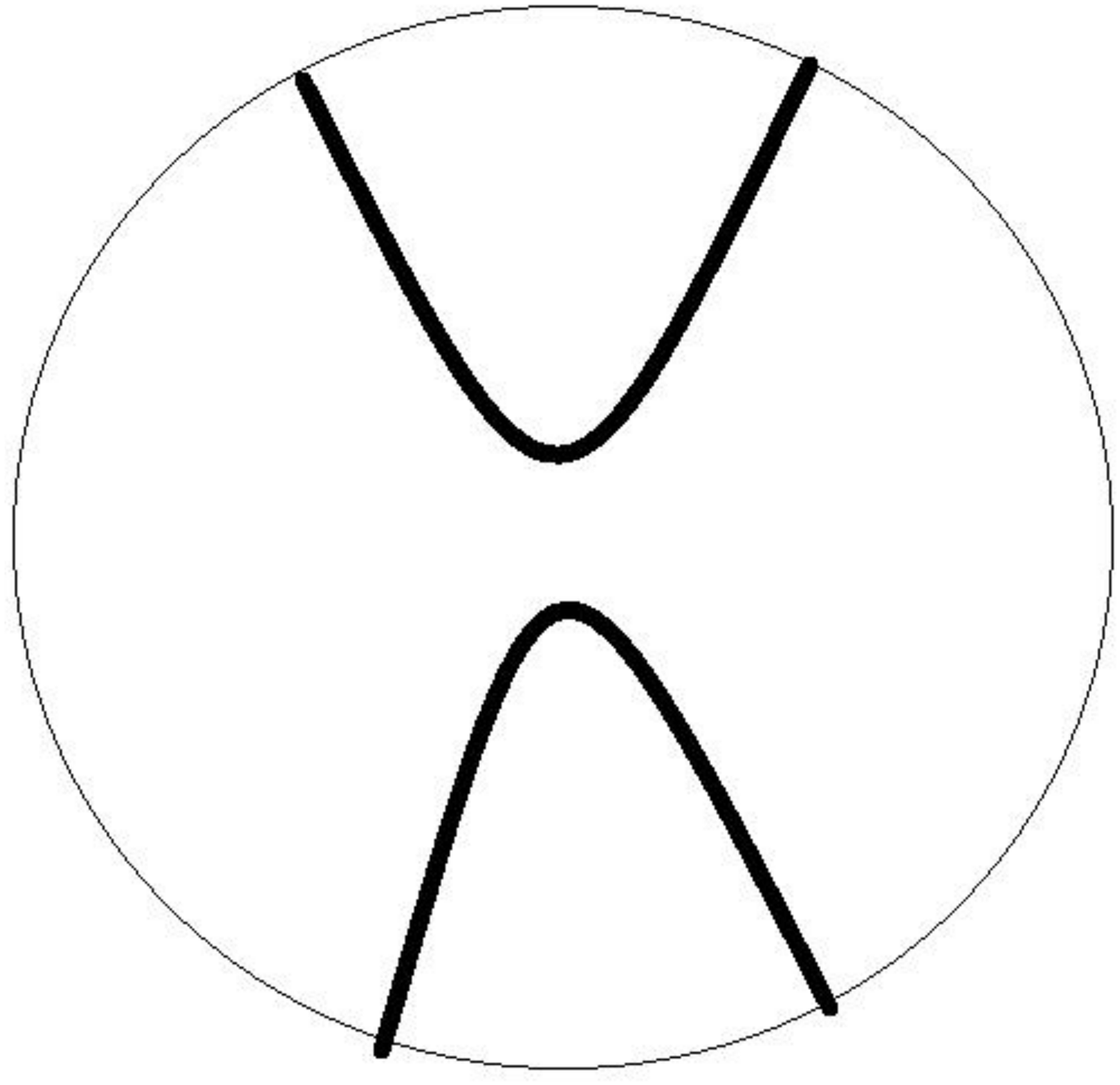}
		\caption{}
		\label{fig_K0}
	\end{minipage}
	\end{tabular}
\end{figure}

\end{df}

We define Kauffman bracket skein modules.

\begin{df}[Kauffman bracket skein module]
Let $J$ be a finite subset of $\partial \Sigma$.
We define  $\skein{\Sigma,J}$  to be the quotient of 
the free module $\Q [A,\gyaku{A}] \mathcal{T} (
\Sigma,J)$ by the skein relation, i.e., 
by the submodule of $\Q [A,\gyaku{A}] \mathcal{T} (
\Sigma,J)$ generated by 
\begin{equation*}
\shuugou{-T_1 +A T_\infty+\gyaku{A} T_0| (T_1,T_\infty,T_0)
\textit{is a Kauffman triple}} \cup \shuugou{T \boxtimes 
\mathcal{O} +(A^2+A^{-2})T|T \in
\mathcal{T}(\Sigma,J)}
\end{equation*}
 where $\mathcal{O} \in \mathcal{T}(\Sigma)$ is the trivial knot.
Following \cite{Turaev},
the element of $\skein{\Sigma,J}$ represented by $T \in \mathcal{T}(\Sigma,J)$
is denoted by $[T]$.
We simply denote $\skein{\Sigma, \emptyset}$ by $\skein{\Sigma}$.
\end{df}

In \cite{Mu2012}, Muller also defined the skein modules for a surface with boundary.
We, however, do not need
`the boundary skein relation' and `the value of a contractible arc'.

Let $J$ and $J'$ be two finite subset of $\partial \Sigma$ 
satisfying $J \cap J' =\emptyset$.
The $\Q[A,\gyaku{A}]$ bilinear homomorphism $\boxtimes :
\skein{\Sigma,J} \times \skein{\Sigma,J'} \to \skein{\Sigma,J \cup J'}$ is defined by
$[T] \boxtimes [T'] \defeq [T \boxtimes T']$ for $T \in \mathcal{T} (\Sigma,J)$ and
$T' \in \mathcal{T} (\Sigma,J')$.
The skein module $\skein{\Sigma}$ is the associative algebra over
 $\Q[A,\gyaku{A}]$ with product
 defined by $ab =a \boxtimes b$ for $a$ and $b \in \skein{\Sigma}$.
The skein module $\skein{\Sigma,J}$ is the $\skein{\Sigma}$-bimodule given
by $av =a \boxtimes v$ and
$va =v \boxtimes a$ for $a \in \skein{\Sigma}$ and $v \in \skein{\Sigma,J}$.
For $v \in \skein{\Sigma,J}$, $v' \in \skein{\Sigma,J'}$ and $a \in \skein{\Sigma}$,
we have $(va)\boxtimes v' =v \boxtimes (av')$.

\subsection{Some Poisson-like structure on $\skein{\Sigma}$}
\label{subsection_Poisson_structure}
In this subsection, we define a Lie bracket of $\skein{\Sigma}$ and
an action $\sigma$ of $\skein{\Sigma}$ on $\skein{\Sigma,J}$.

Let $J$ be a finite subset of $\partial \Sigma$. We denote by $\mathcal{E}_0(\Sigma,J)$
the set of $1$-dimensional submanifolds of
$\Sigma$ with boundary $J$ and no inessential components.
Here a  connected $1$-dimensional submanifold of $\Sigma$ is inessential
if it is a boundary of a disk in $\Sigma$.
We denote the set of isotopy classes in $\mathcal{E}_0(\Sigma,J)$ by
 $\mathcal{T}_0(\Sigma,J)$.

\begin{thm}
\label{thm_przy}
Let $J$ be a finite subset of $\partial \Sigma$.
The skein module $\skein{\Sigma,J}$ is the $\Q[A,A^{-1}]$-free module with basis 
$\mathcal{T}_0(\Sigma,J)$.
\end{thm}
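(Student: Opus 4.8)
The plan is to show that the obvious $\Q[A,\gyaku{A}]$-linear surjection from the free module with basis $\mathcal{T}_0(\Sigma,J)$ onto $\skein{\Sigma,J}$ is in fact an isomorphism, by producing an explicit left inverse realised as a Kauffman state sum valued in the free module itself. Write $V \defeq \Q[A,\gyaku{A}]\mathcal{T}_0(\Sigma,J)$ and let $\pi : V \to \skein{\Sigma,J}$ be the map sending a class $T \in \mathcal{T}_0(\Sigma,J)$ to $[T]$. It suffices to prove that $\pi$ is bijective.

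First I would prove surjectivity of $\pi$, i.e.\ that $\mathcal{T}_0(\Sigma,J)$ generates $\skein{\Sigma,J}$. Given $T \in \mathcal{T}(\Sigma,J)$, choose a tangle diagram $d$ and induct on the number of crossings. If $d$ has a crossing, the crossing together with its two smoothings forms a Kauffman triple, so the skein relation $-T_1 + A T_\infty + \gyaku{A} T_0 = 0$ expresses $[T]$ as a $\Q[A,\gyaku{A}]$-combination of the two smoothed tangles, each with one fewer crossing, hence in the image of $\pi$ by induction. If $d$ has no crossings it is an embedded $1$-submanifold; every component bounding a disk is inessential and may be deleted at the cost of a factor $-(A^2+A^{-2})$ by the relation $[T \boxtimes \mathcal{O}] = -(A^2+A^{-2})[T]$, after which the remaining diagram lies in $\mathcal{E}_0(\Sigma,J)$ and represents an element of $\mathcal{T}_0(\Sigma,J)$. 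Thus $\pi$ is surjective.

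The substance is injectivity, for which I construct the left inverse. For a generic diagram $d$ I set
\[
\Phi(d) \defeq \sum_{s} A^{e(s)}\bigl(-(A^2+A^{-2})\bigr)^{k(s)}\,\kukakko{D_s}_0 ,
\]
the sum running over all Kauffman states $s$ (a choice of $A$- or $\gyaku{A}$-smoothing at each crossing); here $D_s$ is the resulting crossingless $1$-submanifold, $e(s)$ is the number of $A$-smoothings minus the number of $\gyaku{A}$-smoothings, $k(s)$ is the number of inessential (disk-bounding) components of $D_s$, and $\kukakko{D_s}_0 \in \mathcal{T}_0(\Sigma,J)$ is the isotopy class of the union of the essential components of $D_s$. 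The decisive step is to check that $\Phi(d)$ depends only on $T=\kukakko{E}$, not on the chosen diagram: this is the classical verification that the Kauffman bracket is invariant under the moves RI, RII, RIII of the preceding Proposition and under isotopy of $\Sigma$, now carried out with values in $V$ instead of $\Q[A,\gyaku{A}]$. The point requiring care is the bookkeeping of inessential components: the small circle created in the RII computation must be counted as inessential and contribute exactly the factor $-(A^2+A^{-2})$, and since deleting an inessential component never changes whether the remaining components bound disks, the class $\kukakko{D_s}_0$ is well defined on crossingless diagrams.

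Granting this, $\Phi$ descends to $\mathcal{T}(\Sigma,J)$ and extends $\Q[A,\gyaku{A}]$-linearly to the free module on tangles. It annihilates both families of skein relators — the relator $-T_1 + AT_\infty + \gyaku{A}T_0$ by the very definition of the state sum at that crossing, and the relator $T \boxtimes \mathcal{O} + (A^2+A^{-2})T$ by the factor recording the extra inessential circle — so it induces $\Phi : \skein{\Sigma,J} \to V$. Finally, for $T \in \mathcal{T}_0(\Sigma,J)$ a diagram with no crossings and no inessential components has a single state with no circles to delete, whence $\Phi([T]) = T$; that is, $\Phi \circ \pi = \id_V$. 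Therefore $\pi$ is injective, hence an isomorphism, and $\skein{\Sigma,J}$ is $\Q[A,\gyaku{A}]$-free with basis $\mathcal{T}_0(\Sigma,J)$. I expect the Reidemeister invariance of $\Phi$, and in particular the consistent treatment of inessential components across the three moves, to be the main obstacle; the spanning argument and the identity $\Phi \circ \pi = \id_V$ are routine.
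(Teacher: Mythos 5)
Your strategy is the right one, and it is in substance the proof the paper relies on: the paper gives no argument of its own for this theorem, citing Przytycki \cite{skeinmodule} for the case $J=\emptyset$ and asserting that the relative case ``is proved in a similar way.'' Przytycki's argument is precisely your construction --- a state-sum left inverse $\Phi$ to the projection $\pi$ from the free module on crossingless, inessential-component-free diagrams, with well-definedness checked against the diagram moves --- so your proposal fills in what the paper leaves to the reference, and does so uniformly in $J$. The surjectivity step and the identity $\Phi\circ\pi=\id$ are, as you say, routine.

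There is, however, one point at which your write-up as stated would fail, and it is exactly the step you flag as the main obstacle. The Kauffman bracket state sum is \emph{not} invariant under the classical Reidemeister move I: adding a positive (resp.\ negative) kink multiplies it by $-A^{3}$ (resp.\ $-A^{-3}$). So ``the classical verification that the Kauffman bracket is invariant under RI, RII, RIII'' does not exist for the unframed RI; only RII and RIII invariance is classical. Indeed, if the unframed RI were an allowed move on tangle diagrams, the theorem itself would be false: the skein relations force $[T]=-A^{3}[T]$ for every $T$, so each would-be basis element would satisfy the torsion relation $(1+A^{3})[T]=0$, contradicting freeness over $\Q[A,\gyaku{A}]$. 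What rescues the argument is that the tangles of Section \ref{section_def_tangles} are \emph{framed} --- they are images of strips $I\times(-\epsilon,\epsilon)$ and annuli $S^1\times(-\epsilon,\epsilon)$ --- and consequently the move the paper's Proposition calls ``RI'' is the framed version (cancellation of a pair of opposite kinks), not the classical kink removal. Under that move your $\Phi$ \emph{is} invariant, because the two kinks contribute the canceling factors $-A^{3}$ and $-A^{-3}$. So your verification list should be: RII, RIII, the framed RI, and isotopies of $\Sigma$; and in the identity $\Phi\circ\pi=\id_V$ the elements of $\mathcal{T}_0(\Sigma,J)$ must be read as tangles with the blackboard (surface) framing. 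With that amendment, and with your (correct) bookkeeping of inessential circles in the RII computation, the proof goes through.
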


In the case when $J=\emptyset$,
this is proved by
Przytycky \cite{skeinmodule}.
For the general case, it is proved in a similar way to \cite{skeinmodule}.

\begin{cor}
We have $\skein{S^1 \times I} =\Q[A,\gyaku{A}] [l]$ where
$l$ is the element represented by the link whose diagram is
$S^1 \times \shuugou{\frac{1}{2}}$.
\end{cor}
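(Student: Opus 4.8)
The plan is to deduce this directly from Theorem \ref{thm_przy} by classifying the relevant isotopy classes on the annulus $\Sigma = S^1 \times I$ and then matching them with powers of $l$ under the algebra structure. First I would apply Theorem \ref{thm_przy} with $J = \emptyset$, which presents $\skein{S^1 \times I}$ as the free $\Q[A, \gyaku{A}]$-module with basis $\mathcal{T}_0(S^1 \times I, \emptyset)$, the set of isotopy classes of closed $1$-submanifolds of the annulus having no inessential components.

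The core step is the topological classification of these submanifolds. An essential simple closed curve in the annulus is isotopic to the core $S^1 \times \shuugou{\frac{1}{2}}$, and a disjoint union of $n$ pairwise disjoint essential simple closed curves is isotopic to $n$ parallel copies of the core. Hence $\mathcal{T}_0(S^1 \times I, \emptyset) = \shuugou{c_n \mid n \geq 0}$, where $c_n$ denotes $n$ parallel copies of the core curve and $c_0$ is the empty tangle corresponding to $1$. I would next identify $c_n$ with $l^n$: since the product on $\skein{\Sigma}$ is $ab = a \boxtimes b$, the element $l^n$ is represented by $n$ copies of the core stacked at distinct heights in the $I$-direction, and projecting these to $\Sigma$ and applying a small isotopy makes them $n$ disjoint parallel copies of the core, so $[c_n] = l^n$. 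An easy induction on $n$ using associativity of $\boxtimes$ completes this identification.

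Combining these, $\skein{S^1 \times I}$ is the free $\Q[A, \gyaku{A}]$-module with basis $\shuugou{1, l, l^2, \ldots}$, which is precisely the polynomial ring $\Q[A, \gyaku{A}][l]$. The only real content is the annulus classification in the second paragraph, which is a standard fact about simple closed curves on the annulus; everything else is formal once Theorem \ref{thm_przy} is in hand. I do not expect any genuine obstacle, since both the basis and the multiplicative structure are already pinned down by the preceding theorem and the definition of $\boxtimes$.
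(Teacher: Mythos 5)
Your proof is correct and is exactly the derivation the paper intends: the corollary is stated as an immediate consequence of Theorem \ref{thm_przy}, with the implicit content being precisely your classification of $\mathcal{T}_0(S^1 \times I)$ as the parallel copies $c_n$ of the core and the identification $[c_n] = l^n$ via the stacking product. Nothing is missing, and no different route is taken.
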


\begin{cor}
Let $J$ be a finite subset of $\partial \Sigma$. 
The $\Q [A,\gyaku{A}]$-module homomorphism 
$-A+\gyaku{A} :\skein{\Sigma,J} \to \skein{\Sigma,J},x \mapsto (-A+\gyaku{A})x$
is an injective map.
\end{cor}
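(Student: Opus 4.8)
The plan is to deduce injectivity directly from the freeness statement of Theorem~\ref{thm_przy} together with the fact that the coefficient ring is an integral domain. By Theorem~\ref{thm_przy}, $\skein{\Sigma,J}$ is a free module over $R = \Q[A, \gyaku{A}]$ with basis $\mathcal{T}_0(\Sigma,J)$. First I would fix this basis and write an arbitrary element $x \in \skein{\Sigma,J}$ uniquely as a finite $R$-linear combination $x = \sum_i p_i b_i$, where each $p_i \in R$ and the $b_i \in \mathcal{T}_0(\Sigma,J)$ are distinct.

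Next I would record two elementary ring-theoretic facts about $R = \Q[A, \gyaku{A}]$. Since $R$ is the localization of the polynomial ring $\Q[A]$ at the multiplicative set of powers of $A$, it embeds in the field of rational functions and is therefore an integral domain. Moreover the element $-A + \gyaku{A} = \gyaku{A}(1 - A^2)$ is a nonzero element of $R$, hence is not a zero divisor.

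The conclusion then follows formally. Because the map $x \mapsto (-A + \gyaku{A})x$ is $R$-linear and the $b_i$ form a basis, we have $(-A + \gyaku{A}) x = \sum_i \big((-A + \gyaku{A}) p_i\big) b_i$, and this is zero if and only if $(-A + \gyaku{A}) p_i = 0$ for every $i$. As $R$ is a domain and $-A + \gyaku{A} \neq 0$, each $p_i = 0$, so $x = 0$ and the homomorphism is injective. I do not anticipate any genuine obstacle here; the one point worth checking is that Theorem~\ref{thm_przy} supplies freeness for an arbitrary finite $J \subset \partial\Sigma$, which it does, so the argument applies verbatim in the general case rather than only when $J = \emptyset$.
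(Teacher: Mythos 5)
Your proof is correct and is exactly the argument the paper intends: the corollary is stated as an immediate consequence of Theorem \ref{thm_przy}, since multiplication by the nonzero element $-A+\gyaku{A}$ of the integral domain $\Q[A,\gyaku{A}]$ is injective on any free $\Q[A,\gyaku{A}]$-module. No gap and no meaningful difference from the paper's (implicit) reasoning.
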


\begin{lemm}
\label{lemm_kouten_sa}
Let $J$ be a finite subset of $\partial \Sigma$.
Let $T_1$, $T_2$, $T_3$ and $T_4$ be four elements of 
$\mathcal{T}(\Sigma,J)$ presented by four diagrams
which are identical
except for some neighborhood of a point, 
where they differ as shown in Figure \ref{fig_K2},
Figure \ref{fig_K1}, Figure \ref{fig_Kinfi} and Figure \ref{fig_K0}
respectively.
Then we have $[T_1]-[T_2] =(A-\gyaku{A})([T_3]-[T_4])$.

\end{lemm}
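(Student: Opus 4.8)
The plan is to apply the defining skein relation twice, once to each of the two crossings $T_1$ and $T_2$, and then to subtract the resulting expressions; no geometry beyond reading off the Kauffman triples from the figures is needed.

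First I would observe that Figures \ref{fig_K2}, \ref{fig_Kinfi} and \ref{fig_K0} exhibit $(T_1,T_3,T_4)$ as a Kauffman triple: the three tangles agree outside a disk, where $T_1$ carries the crossing of Figure \ref{fig_K2} while $T_3$ and $T_4$ carry the two smoothings of Figures \ref{fig_Kinfi} and \ref{fig_K0}. Since $-T_1 + A T_\infty + \gyaku{A} T_0$ lies in the skein submodule, this gives
\begin{equation*}
[T_1] = A[T_3] + \gyaku{A}[T_4]
\end{equation*}
in $\skein{\Sigma,J}$. Next I would treat $T_2$ the same way. The crucial point is that the crossing of Figure \ref{fig_K1} is the sign-reversed crossing of Figure \ref{fig_K2}, so the roles of the two smoothings are interchanged: $(T_2,T_4,T_3)$ is a Kauffman triple, and the skein relation yields
\begin{equation*}
[T_2] = A[T_4] + \gyaku{A}[T_3].
\end{equation*}
Subtracting the two identities, the cross terms reorganize as
\begin{equation*}
[T_1] - [T_2] = (A-\gyaku{A})[T_3] + (\gyaku{A}-A)[T_4] = (A-\gyaku{A})([T_3]-[T_4]),
\end{equation*}
which is the claim.

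The only step requiring genuine care, and the one most likely to introduce a sign error, is the bookkeeping for $T_2$: one must check against the conventions fixed in the figures that passing from the crossing of Figure \ref{fig_K2} to that of Figure \ref{fig_K1} really swaps which smoothing is weighted by $A$ and which by $\gyaku{A}$. A useful sanity check is that the swapped pairing is forced: were $(T_2,T_3,T_4)$ the Kauffman triple instead, we would obtain $[T_2]=[T_1]$, which fails unless $A=\gyaku{A}$. Once this orientation bookkeeping is settled, everything else is a direct substitution into the relation defining $\skein{\Sigma,J}$, and the identity holds for every finite $J \subset \partial\Sigma$ with no further hypotheses.
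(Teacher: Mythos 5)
Your proof is correct and is essentially identical to the paper's: the paper also applies the defining skein relation to each crossing, obtaining $[T_1]=A[T_3]+\gyaku{A}[T_4]$ and $[T_2]=\gyaku{A}[T_3]+A[T_4]$, and subtracts. Your extra bookkeeping about the swapped roles of the smoothings for the reversed crossing (with the sanity check that otherwise $[T_1]=[T_2]$) is exactly the point the paper leaves implicit in its one-line computation.
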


\begin{proof}
We have 
\begin{equation*}
[T_1]-[T_2]=(A[T_3]+A^{-1}[T_4])-(\gyaku{A}[T_3]+A[T_4])=
(A-\gyaku{A})([T_3]-[T_4]).
\end{equation*}
\end{proof}

In Definition \ref{definition_Lie_bracket},
we introduce a Lie bracket in $\skein{\Sigma}$
by using the following proposition.

\begin{prop}
\label{tensor_sa}
Let $J$ and $J'$ be two finite subsets of $\partial \Sigma$ satisfying $J \cap J' =\emptyset$. We have $v \boxtimes v' - v' \boxtimes v \in (A-\gyaku{A})
\skein{\Sigma, J\cup J'}$ for $v \in \skein{\Sigma,J}$ and $v' \in \skein{\Sigma,J'}$.

\end{prop}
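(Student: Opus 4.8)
The plan is to reduce the statement to the case where $v$ and $v'$ are (classes of) single tangles $T \in \mathcal{T}(\Sigma,J)$ and $T' \in \mathcal{T}(\Sigma,J')$, since both $\boxtimes$ operations are $\Q[A,\gyaku{A}]$-bilinear and the target submodule $(A-\gyaku{A})\skein{\Sigma,J\cup J'}$ is a $\Q[A,\gyaku{A}]$-submodule. By Theorem \ref{thm_przy} it even suffices to treat $v \in \mathcal{T}_0(\Sigma,J)$ and $v' \in \mathcal{T}_0(\Sigma,J')$, so I may assume $T$ and $T'$ have no crossings among their own strands; all crossings in a diagram of $T \boxtimes T'$ then occur between a strand of $T$ and a strand of $T'$. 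Recall that in $T \boxtimes T'$ the tangle $T$ sits in the upper slab (via $e_1$) and $T'$ in the lower slab (via $e_2$), whereas in $T' \boxtimes T$ the heights are swapped; thus $T\boxtimes T'$ and $T'\boxtimes T$ have the \emph{same} underlying diagram on $\Sigma$ but with every crossing reversed (each over-crossing of a $T$-strand becomes an under-crossing and vice versa).

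The heart of the argument is then a crossing-by-crossing comparison. First I would fix generic representatives so that the projected diagram of $T$ and the projected diagram of $T'$ meet transversally in finitely many double points $p_1,\dots,p_m$, each a genuine $T$-over-$T'$ crossing. I want to interpolate between $T\boxtimes T'$ and $T'\boxtimes T$ by switching these crossings one at a time. For a single crossing switch, the two tangles differ exactly as $T_1$ and $T_2$ in Lemma \ref{lemm_kouten_sa} (over-crossing versus under-crossing at one point, everything else identical), so their difference equals $(A-\gyaku{A})([T_3]-[T_4])$, which lies in $(A-\gyaku{A})\skein{\Sigma,J\cup J'}$. Summing the telescoping differences along any sequence of single switches connecting the two diagrams shows $[T\boxtimes T'] - [T'\boxtimes T] \in (A-\gyaku{A})\skein{\Sigma,J\cup J'}$.

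Concretely, I would set up a telescoping sum. Let $W_0 = T\boxtimes T'$ and let $W_k$ be the diagram obtained from $W_0$ by switching the first $k$ of the crossings $p_1,\dots,p_m$, so that $W_m = T'\boxtimes T$ up to isotopy. Then
\begin{equation*}
[T\boxtimes T'] - [T'\boxtimes T] = \sum_{k=1}^m \bigl( [W_{k-1}] - [W_k] \bigr),
\end{equation*}
and each summand is the difference of two tangles differing only by a single crossing switch, hence by Lemma \ref{lemm_kouten_sa} lies in $(A-\gyaku{A})\skein{\Sigma,J\cup J'}$. Since this submodule is closed under addition, so does the whole sum.

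The main obstacle I anticipate is purely bookkeeping rather than conceptual: one must verify that switching one crossing at a time does indeed carry the diagram of $T\boxtimes T'$ to that of $T'\boxtimes T$, and that at each intermediate stage $W_k$ is a legitimate tangle diagram representing an element of $\mathcal{T}(\Sigma,J\cup J')$ whose remaining geometry matches the hypotheses of Lemma \ref{lemm_kouten_sa} (the two branches at $p_k$ are an over/under pair and everything else agrees). The only subtlety is making sure the reduction to $\mathcal{T}_0$ representatives is compatible with the $\boxtimes$ product, i.e. that crossings between $T$ and $T'$ are the \emph{only} crossings present so that Lemma \ref{lemm_kouten_sa} applies cleanly at each step; once that is set up, the telescoping sum finishes the proof immediately.
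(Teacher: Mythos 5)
Your proposal is correct and follows essentially the same route as the paper: the paper also telescopes through diagrams obtained by switching the mutual crossings of $d$ and $d'$ one at a time and applies Lemma \ref{lemm_kouten_sa} at each switch to produce the factor $(A-\gyaku{A})$. The only difference is your preliminary reduction to crossingless representatives via Theorem \ref{thm_przy}, which the paper skips since self-crossings of $d$ and of $d'$ agree in both products and never need to be switched anyway.
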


\begin{proof}
Let $T$ be an element of $\mathcal{T}(\Sigma,J)$ and
$T'$ an element of $\mathcal{T}(\Sigma,J')$.
Choose tangle diagrams $d$ and $d'$ presenting $T$ and $T'$, respectively, 
such that the intersections of $d$ and $d'$ consist of transverse double points
$P_1, P_2, \cdots, P_m$.
For $i=1,2, \cdots, m$, let $d(1,i)$ and $d(-1,i)$ be two tangle diagrams
satisfying the following contitions.
\begin{itemize}
\item The two tangle diagrams $d(1,i)$ and $d(-1,i)$ equal
$d \cup d'$ with the same height-information as
$d$ and $d'$ except for the neighborhoods of the intersections
of $d$ and $d'$.
\item The branches of $d(1,i)$ and $d(-1,i)$ in the neighborhood of $P_j$
belonging to $d'$ are overcrossings for
$j=1, \cdots, i-1$.
\item The branches of $d(1,i)$ and $d(-1,i)$ in the neighborhood of $P_j$
belonging to $d$ are overcrossings for 
$j=i+1, \cdots,m$.
\item The two tangle diagrams $d(1,i)$ and $d(-1,i)$  are as shown in Figure
\ref{fig_d_and_d_1}  and Figure \ref{fig_d_and_d_2}, respectively, 
in the neighborhood of $P_i$.

\end{itemize}

\begin{figure}[htbp]
	\begin{tabular}{cc}
	\begin{minipage}{0.33\hsize}
		\centering
		\includegraphics[width=3cm]{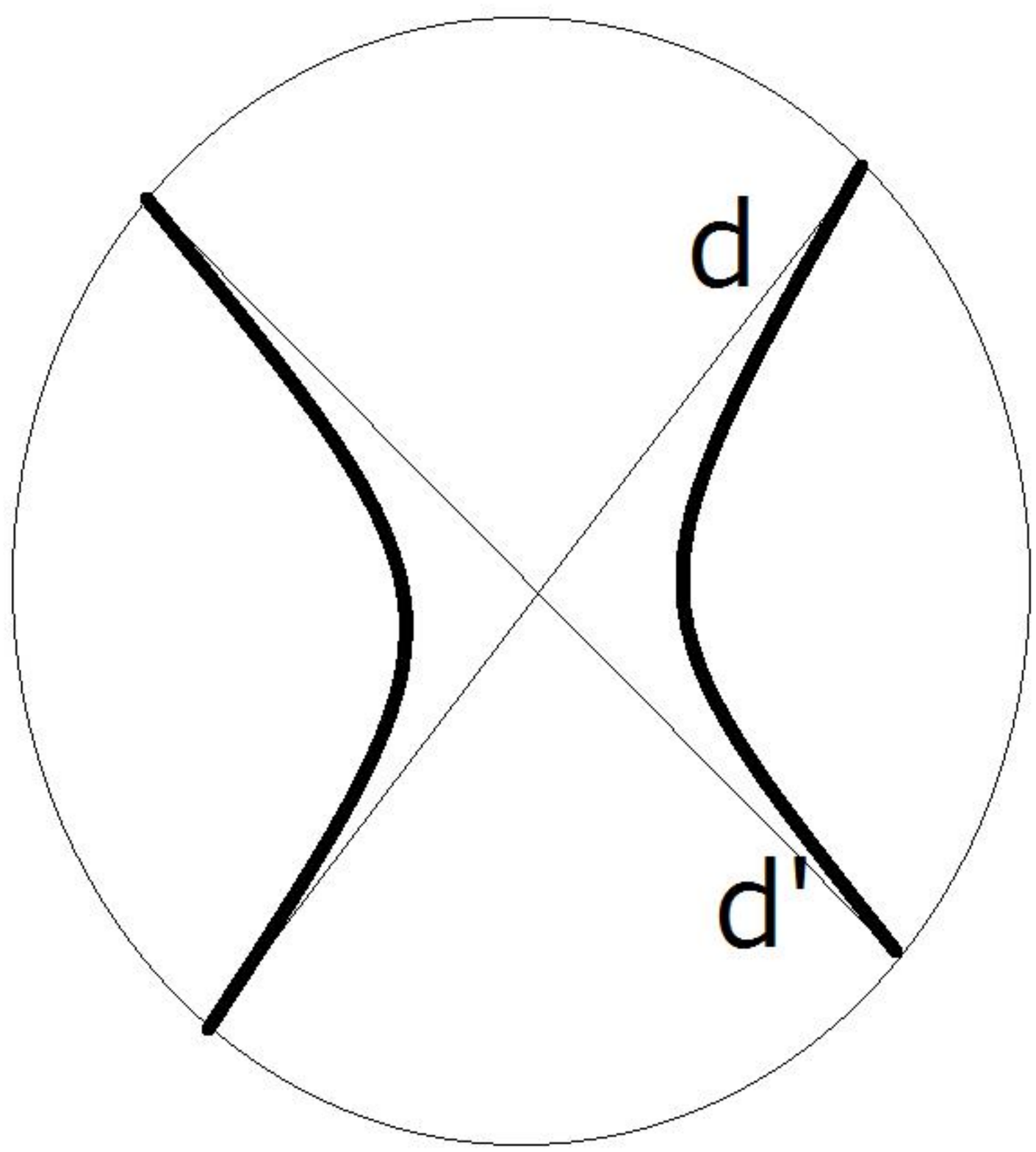}
		\caption{}
		\label{fig_d_and_d_1}
	\end{minipage}
     \begin{minipage}{0.33\hsize}
		\centering
		\includegraphics[width=3cm]{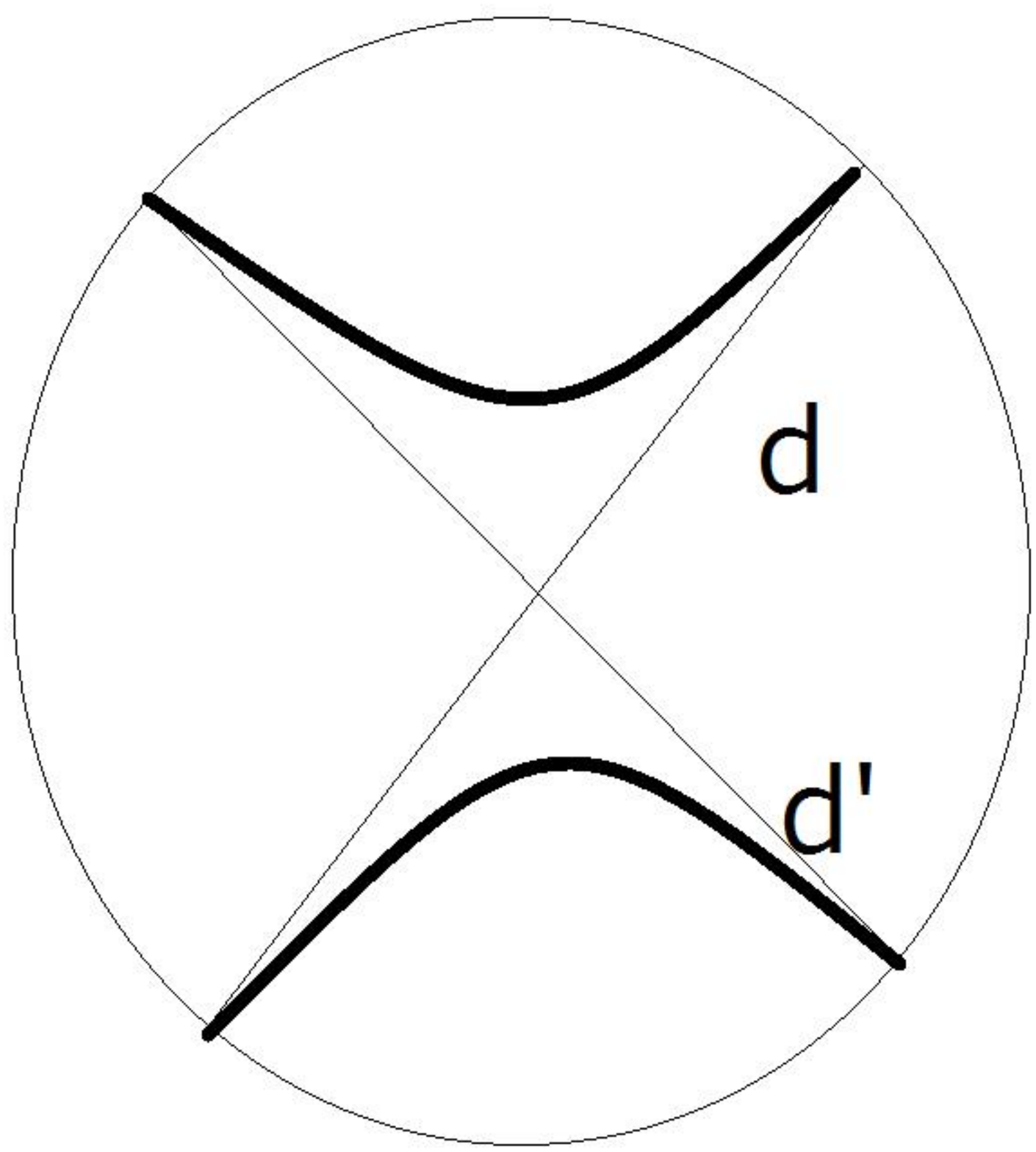}
		\caption{}
		\label{fig_d_and_d_2}
	\end{minipage}
	\end{tabular}
\end{figure}
We denote by $T(1,i)$ a tangle presented by $d(1,i)$ and by $T(-1,i)$ a 
tangle presented by $d(-1,i)$. 
Using lemma \ref{lemm_kouten_sa}, we have 
\begin{equation*}
[T]\boxtimes [T']-[T']\boxtimes[T] =(A-\gyaku{A}) \sum^m_{i=1}([T(1,i)]-[T(-1,i)]).
\end{equation*}

This proves the proposition.

\end{proof}

\begin{df}\label{definition_Lie_bracket}
Let $J$ be a finite subset of $\partial \Sigma$.
We define a bracket $[,]$ of $\skein{\Sigma}$ by
\begin{equation*}
[x,y] \defeq \frac{1}{-A+\gyaku{A}}(xy-yx)
\end{equation*}
for $x$ and $y \in \skein{\Sigma}$.
We define an action $\sigma$ of $\skein{\Sigma}$ on $\skein{\Sigma,J}$ by
\begin{equation*}
\sigma(x)(v) \defeq \frac{1}{-A+\gyaku{A}}(xv-vx)
\end{equation*}
for $x \in \skein{\Sigma}$ and $v \in \skein{\Sigma,J}$.
\end{df}

It is easy to prove the following proposition.

\begin{prop}
Let $J$ be a finite subset of $\partial \Sigma$. 
The bracket $[,]:\skein{\Sigma} \times \skein{\Sigma} \to \skein{\Sigma}$ makes
$\skein{\Sigma}$ a Lie algebra. The action $\sigma : \skein{\Sigma} \times \skein{\Sigma
,J} \to \skein{\Sigma,J}$ makes $\skein{\Sigma,J}$ a $\skein{\Sigma}$-module when we 
regard $\skein{\Sigma}$ as a Lie algebra.
Furthermore, for $x$, $y$ and $z \in \skein{\Sigma}$ and $v \in \skein{\Sigma,J}$, 
we have the Leibniz rules:
\begin{align*}
&[xy,z]=x[y,z]+[x,z]y, \\
&\sigma(xy)(v)=x\sigma(y)(v)+\sigma(x)(v)y, \\
&\sigma(x)(yv)=[x,y]v +y \sigma(x)(v), \\
&\sigma(x)(vy)=\sigma(x)(v)y+ v[x,y].
\end{align*}
\end{prop}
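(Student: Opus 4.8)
The plan is to reduce every assertion to two ingredients already in hand: the associativity of the product $\boxtimes$ on $\skein{\Sigma}$, and the Corollary above stating that multiplication by $-A+\gyaku{A}$ is injective on $\skein{\Sigma}$ and on $\skein{\Sigma,J}$. First I would verify that $[,]$ and $\sigma$ are well-defined. Proposition \ref{tensor_sa}, applied with $J=J'=\emptyset$ for the bracket and with one of the two finite sets empty for the action, shows that $xy-yx$ lies in $(A-\gyaku{A})\skein{\Sigma}$ and that $xv-vx$ lies in $(A-\gyaku{A})\skein{\Sigma,J}$. Hence $[x,y]$ and $\sigma(x)(v)$ exist as preimages under multiplication by $-A+\gyaku{A}$, and by injectivity of that multiplication these preimages are unique; bilinearity over $\Q[A,\gyaku{A}]$ then follows from the bilinearity of $\boxtimes$ together with uniqueness.

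For the Lie algebra axioms, antisymmetry is immediate from $xy-yx=-(yx-xy)$. For the Jacobi identity I would multiply the cyclic sum $[[x,y],z]+[[y,z],x]+[[z,x],y]$ by $(-A+\gyaku{A})^2$; unwinding the definitions expresses the result as a signed sum of the six triple products, in which each monomial occurs once with each sign and therefore cancels. Since the square of an injective map is injective, vanishing of $(-A+\gyaku{A})^2$ times the cyclic sum forces the Jacobi identity itself. The module relation $\sigma([x,y])(v)=\sigma(x)(\sigma(y)(v))-\sigma(y)(\sigma(x)(v))$ is handled by the same device: multiplying through by $(-A+\gyaku{A})^2$ turns it into an identity among triple products acting on $v$, which holds by associativity.

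The four Leibniz rules are verified by clearing the single factor $-A+\gyaku{A}$. For example, $(-A+\gyaku{A})[xy,z]=xyz-zxy$, whereas $(-A+\gyaku{A})(x[y,z]+[x,z]y)$ expands to $x(yz-zy)+(xz-zx)y=xyz-zxy$; equality of the two together with injectivity yields the first rule, and the remaining three follow by the same short bookkeeping, using $xv=x\boxtimes v$ and $vx=v\boxtimes x$ for the two rules involving $v\in\skein{\Sigma,J}$.

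I expect the only genuine point to settle carefully is the well-definedness step, since every subsequent identity is purely formal once division by $-A+\gyaku{A}$ is legitimate and unambiguous. Concretely, the whole proposition hinges on Proposition \ref{tensor_sa} placing the relevant commutators in the image of $-A+\gyaku{A}$, and on the injectivity Corollary guaranteeing that the quotient is a genuine, unique element of the skein module; after that, no geometric input is required and the axioms collapse to associativity.
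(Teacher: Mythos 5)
Your proposal is correct, and there is nothing in the paper to compare it against: the paper states only ``It is easy to prove the following proposition'' and omits the argument entirely. Your write-up supplies exactly the intended details --- well-definedness and uniqueness of $[x,y]$ and $\sigma(x)(v)$ via Proposition \ref{tensor_sa} together with the injectivity of multiplication by $-A+\gyaku{A}$, after which antisymmetry, Jacobi, the Lie-module identity $\sigma([x,y])=\sigma(x)\sigma(y)-\sigma(y)\sigma(x)$, and all four Leibniz rules follow by clearing one or two factors of $-A+\gyaku{A}$ and cancelling monomials using associativity of $\boxtimes$ and the bimodule axioms; I verified the cancellations and they all hold as you describe.
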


Let $J$ and $J'$ be two finite subsets of $\partial \Sigma$.
We have 
\begin{equation*}
\sigma(x)(v \boxtimes v')=\sigma(x)(v)\boxtimes v' +v \boxtimes \sigma(x)(v')
\end{equation*}
for $x \in \skein{\Sigma}$, $v \in \skein{\Sigma,J}$ and $v' \in \skein{\Sigma,J'}$.

\subsection{Filtrations and completions}
\label{subsection_filtrations_and_completions}
We introduce filtrations of Kauffman bracket skein modules and define 
the completed Kauffman bracket skein modules.

We define an augmentation map $\epsilon : \skein{\Sigma} \to \Q$
by $A \mapsto -1$ and $[L] \mapsto (-2)^{\zettaiti{L}}$
for $L \in \mathcal{T} (\Sigma)$ where $\zettaiti{L}$ is the number of 
components of $L$. 

\begin{prop}
The augmentation map $\epsilon$ is well-defined.
\end{prop}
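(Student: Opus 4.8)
The plan is to verify that the defining relations of $\skein{\Sigma}$ are respected by the proposed assignment $A \mapsto -1$, $[L] \mapsto (-2)^{\zettaiti{L}}$. Concretely, $\epsilon$ is first defined on the free module $\Q[A,\gyaku{A}]\mathcal{T}(\Sigma)$ by sending $A$ to $-1$ and each generator $L$ to $(-2)^{\zettaiti{L}}$, which is unambiguous since $\zettaiti{L}$ depends only on the isotopy class of $L$. To descend to the quotient $\skein{\Sigma}$, I must check that $\epsilon$ annihilates every generator of the submodule imposed by the skein relation, namely the elements $-T_1 + A T_\infty + \gyaku{A} T_0$ for each Kauffman triple $(T_1, T_\infty, T_0)$ and the elements $T \boxtimes \mathcal{O} + (A^2 + A^{-2}) T$ for each $T$.

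First I would handle the trivial-loop relation, which is the easier of the two. Evaluating $\epsilon$ on $T \boxtimes \mathcal{O} + (A^2 + A^{-2})T$ and using $A \mapsto -1$ gives $A^2 + A^{-2} \mapsto 1 + 1 = 2$. Since $\mathcal{O}$ adds exactly one component to $T$, we have $\zettaiti{T \boxtimes \mathcal{O}} = \zettaiti{T} + 1$, so $\epsilon(T \boxtimes \mathcal{O}) = (-2)\epsilon(T)$ and the whole expression evaluates to $(-2)\epsilon(T) + 2\epsilon(T) = 0$, as required.

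The main obstacle is the Kauffman-triple relation, because the three tangles $T_1$, $T_\infty$, $T_0$ generally have differing numbers of components. The hard part is the bookkeeping: near the distinguished point the three diagrams look like a crossing, the horizontal smoothing, and the vertical smoothing, and exactly one of the two smoothings $T_\infty$, $T_0$ merges two strands into one (or splits one into two) relative to $T_1$, while the other preserves the component count of $T_1$. Writing $n = \zettaiti{T_1}$, the two smoothings then have component numbers $\{n, n\pm 1\}$ in some order, and I would carefully argue that $\epsilon(-T_1 + A T_\infty + \gyaku{A}T_0)$ evaluates to $-(-2)^n + (-1)(-2)^{n+1} + (-1)(-2)^{n-1}$ in the merging configuration. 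Substituting $A = \gyaku{A} = -1$ and factoring out $(-2)^{n-1}$ reduces this to $(-2)^{n-1}\bigl(-(-2) - (-2)^2 - 1\bigr) = (-2)^{n-1}(2 - 4 + (-1)\cdot(-2)^{\,\cdot})$, which I would confirm equals zero by the identity $(-2)^2 - (-2) - 2\cdot(-2)^0 = 0$, i.e. the fact that $-1$ is a root of the Kauffman-bracket loop relation. I would treat both orientations of the triple (merge versus split) symmetrically and note that the transverse-double-point local model guarantees these are the only two cases.

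Finally I would remark that, because $\skein{\Sigma}$ is generated as a $\Q[A,\gyaku{A}]$-module by the classes $[L]$ and $\epsilon$ is already defined to be $\Q[A,\gyaku{A}]$-linear with $A \mapsto -1$, well-definedness on all of $\skein{\Sigma}$ follows once the two families of relations are killed; no further compatibility needs checking. The essential input throughout is simply that $-1$ is the evaluation at which $A^2 + A^{-2} + 2 = 0$, which is precisely why the Kauffman loop value becomes $-2$ and makes $(-2)^{\zettaiti{L}}$ the correct functorial assignment.
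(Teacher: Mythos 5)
Your overall strategy---define $\epsilon$ on the free module $\Q[A,\gyaku{A}]\mathcal{T}(\Sigma)$ and check that it annihilates the two families of defining relations---is the same as the paper's, and your treatment of the trivial-loop relation is correct. But the heart of the proof is the case analysis of how the two smoothings of a crossing change the number of components, and yours is wrong. You claim that exactly one of $T_\infty$, $T_0$ changes the component count of $T_1$ while the other preserves it, so that the counts are $\{n,n\pm1\}$. That is true only when the crossing is a self-crossing of a single component of $T_1$ (and there the change is always a split, $+1$, never a merge). When the crossing involves two \emph{distinct} components, \emph{both} smoothings merge them, so $T_\infty$ and $T_0$ both have $n-1$ components. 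The correct and exhaustive list of cases, which is exactly what the paper records, is
$(\zettaiti{T_1},\zettaiti{T_\infty},\zettaiti{T_0}) \in \{(n,n-1,n-1),\ (n,n+1,n),\ (n,n,n+1)\}$,
and in each of these one checks $(-2)^{\zettaiti{T_1}}+(-2)^{\zettaiti{T_\infty}}+(-2)^{\zettaiti{T_0}}=0$, which is what $\epsilon(-T_1+AT_\infty+\gyaku{A}T_0)=0$ amounts to after setting $A=-1$.

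Moreover, your arithmetic fails even on its own terms. The expression you write for the ``merging configuration,'' $-(-2)^n+(-1)(-2)^{n+1}+(-1)(-2)^{n-1}$, corresponds to counts $(n,n+1,n-1)$---a pattern that never occurs, since the two smoothings can never differ by two components---and it equals $(-2)^{n-1}(2-4-1)=-3(-2)^{n-1}\neq 0$; the ``identity'' $(-2)^2-(-2)-2\cdot(-2)^0=0$ you invoke to rescue it is false (the left-hand side is $4$). This is not a cosmetic slip: if a pattern such as $(n,n,n-1)$ could occur, $\epsilon$ would genuinely fail to descend, because $(-2)^n+(-2)^n+(-2)^{n-1}=-3(-2)^{n-1}\neq 0$. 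So the proposition is true precisely because the three patterns listed above are the only possible ones, and establishing that fact---by examining how the two strands at the crossing are joined outside the crossing (same component or different components)---is the step your proof is missing.
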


\begin{proof}
Let $T_1$, $T_\infty$ and $T_0$ be three elements of 
$\mathcal{T} (\Sigma)$ such that $(T_1,T_\infty, T_0)$ is a
Kauffman triple. There are three cases,
\begin{align*}
&\zettaiti{T_1}-1 = \zettaiti{T_\infty} = \zettaiti{T_0} ,\\
&\zettaiti{T_1} = \zettaiti{T_\infty}-1 = \zettaiti{T_0} ,\\
&\zettaiti{T_1} = \zettaiti{T_\infty} = \zettaiti{T_0}-1 .
\end{align*}
In each case, we have $\epsilon ([T_1]-A[T_\infty]-\gyaku{A}[T_0]) =0$.
For $T \in \mathcal{T}(\Sigma)$, we have $\epsilon ([T \boxtimes
 \mathcal{O}]+(A^2+A^{-2})T]) =0$.
This proves the lemma.
\end{proof}

\begin{lemm}
\label{lemm_skein_kernel}
We have $[\skein{\Sigma},\skein{\Sigma}] \subset \ker \epsilon$.
\end{lemm}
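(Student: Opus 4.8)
The plan is to reduce the statement to generators and then reuse the explicit commutator formula already obtained in the proof of Proposition \ref{tensor_sa}. First I would record two linearity facts. The bracket $[\cdot,\cdot]$ is $\Q[A,\gyaku{A}]$-bilinear: the product $\boxtimes$ is bilinear, and dividing by the scalar $-A+\gyaku{A}$ is well-defined and linear thanks to the injectivity corollary. Also $\epsilon$ satisfies $\epsilon(\lambda z)=\lambda|_{A=-1}\,\epsilon(z)$ for $\lambda \in \Q[A,\gyaku{A}]$, so that $\ker\epsilon$ is a $\Q[A,\gyaku{A}]$-submodule. Writing arbitrary $x,y \in \skein{\Sigma}$ as $\Q[A,\gyaku{A}]$-combinations of classes $[T]$ with $T \in \mathcal{T}(\Sigma)$, and using that $[\skein{\Sigma},\skein{\Sigma}]$ is the submodule generated by the brackets $[[T],[T']]$, it therefore suffices to prove $\epsilon([[T],[T']])=0$ for $T,T' \in \mathcal{T}(\Sigma)$.

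For such $T,T'$ I would invoke the computation inside the proof of Proposition \ref{tensor_sa}: with the crossings $P_1,\dots,P_m$ of chosen diagrams $d,d'$ and the resolved tangles $T(1,i),T(-1,i)$, one has $[T]\boxtimes[T']-[T']\boxtimes[T]=(A-\gyaku{A})\sum_{i=1}^m([T(1,i)]-[T(-1,i)])$. Dividing by $-A+\gyaku{A}=-(A-\gyaku{A})$, again legitimate by injectivity, yields the clean identity
\[
[[T],[T']]=-\sum_{i=1}^m\left([T(1,i)]-[T(-1,i)]\right),
\]
so that $\epsilon([[T],[T']])=-\sum_{i=1}^m\left(\epsilon([T(1,i)])-\epsilon([T(-1,i)])\right)$.

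The heart of the argument is then the geometric observation that, by their very definition (Figure \ref{fig_d_and_d_1} versus Figure \ref{fig_d_and_d_2}), the diagrams of $T(1,i)$ and $T(-1,i)$ are identical away from $P_i$ and differ there only by switching which branch is the overcrossing. A crossing change leaves the underlying immersed curve unchanged, hence does not alter the number of connected components of the link; thus $\zettaiti{T(1,i)}=\zettaiti{T(-1,i)}$ and $\epsilon([T(1,i)])=(-2)^{\zettaiti{T(1,i)}}=(-2)^{\zettaiti{T(-1,i)}}=\epsilon([T(-1,i)])$. Each summand vanishes, so $\epsilon([[T],[T']])=0$, and by the reduction above $[\skein{\Sigma},\skein{\Sigma}]\subset\ker\epsilon$.

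Finally let me flag where the real content lies. One cannot shortcut this by applying $\epsilon$ directly to the defining relation $(-A+\gyaku{A})[x,y]=xy-yx$: evaluating the scalar $-A+\gyaku{A}$ at $A=-1$ gives $0$, so that route recovers only the tautology $\epsilon(xy)=\epsilon(yx)$ and says nothing about $\epsilon([x,y])$. The essential step is therefore to retain the explicit leading representative of the commutator from Proposition \ref{tensor_sa} and to use that $\epsilon$ depends on a link only through its component count, which is a crossing-change invariant. I expect no further obstacle beyond the bookkeeping in the bilinearity reduction and the care needed in the interaction of $\epsilon$ with scalars.
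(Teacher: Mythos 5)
Your skeleton matches the paper's proof: reduce to generators, reuse the commutator formula coming out of Proposition \ref{tensor_sa}, and show $\epsilon$ kills each term $[T(1,i)]-[T(-1,i)]$. (The paper reduces to knots, i.e.\ algebra generators, whereas you keep arbitrary $T,T'\in\mathcal{T}(\Sigma)$ as module generators; either reduction works, and yours avoids any appeal to the Leibniz rule.) However, the justification you give at the step you yourself call the heart of the argument is wrong. The tangles $T(1,i)$ and $T(-1,i)$ are \emph{not} related by a crossing change at $P_i$: Figures \ref{fig_d_and_d_1} and \ref{fig_d_and_d_2} depict the two \emph{smoothings} of the crossing $P_i$ (as in Figures \ref{fig_Kinfi} and \ref{fig_K0}), not the two overcrossing choices. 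This is forced by the formula you quote: the factor $(A-\gyaku{A})$ arises precisely because Lemma \ref{lemm_kouten_sa} converts each crossing switch in the telescoping sum into $(A-\gyaku{A})$ times a difference of smoothings. If $T(1,i)$ and $T(-1,i)$ were the two crossing-switched diagrams, then $\sum_{i=1}^m([T(1,i)]-[T(-1,i)])$ would telescope back to $[T]\boxtimes[T']-[T']\boxtimes[T]$, and the identity of Proposition \ref{tensor_sa} would read $(1-A+\gyaku{A})\left([T]\boxtimes[T']-[T']\boxtimes[T]\right)=0$; since $\skein{\Sigma}$ is a free $\Q[A,\gyaku{A}]$-module, this would make $\skein{\Sigma}$ commutative, which is false. (Consistently, the paper's own proof remarks that $T(1,i)$ and $T(-1,i)$ are knots when $T,T'$ are knots --- impossible for a crossing change applied to a two-component link, but automatic for a smoothing.)

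The gap is local and fixable, because the statement you need is still true, just for a different reason. Each $P_i$ is by construction a crossing between a strand of $d$ and a strand of $d'$, hence between two \emph{distinct} components of the underlying link, one from $T$ and one from $T'$. Either smoothing of such a crossing merges those two components into one and leaves the remaining components untouched, so $\zettaiti{T(1,i)}=\zettaiti{T(-1,i)}=\zettaiti{T}+\zettaiti{T'}-1$, whence $\epsilon([T(1,i)])=\epsilon([T(-1,i)])=(-2)^{\zettaiti{T}+\zettaiti{T'}-1}$ and each summand dies under $\epsilon$, exactly as you intended. This is the paper's remark in the special case $\zettaiti{T}=\zettaiti{T'}=1$, where both smoothings are knots. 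With "crossing changes preserve component count" replaced by "both smoothings of a crossing between distinct components merge them, hence have equal component count," your proof is complete and essentially coincides with the paper's; your closing observation that one cannot shortcut the argument by applying $\epsilon$ to $(-A+\gyaku{A})[x,y]=xy-yx$ is a correct and worthwhile point.
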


\begin{proof}
Since $\skein{\Sigma}$ is generated by the sets 
of elements represented by knots, it suffices to show that $[[T],[T']] \in \ker \epsilon$
for any two elements $T$ and $T'$ of $\mathcal{T} (\Sigma)$
satisfying $\zettaiti{T} =1$ and $\zettaiti{T'}=1$.
Choose tangle diagrams $d$ and $d'$ presenting $T$ and $T'$, respectively, 
such that the intersections of $d$ and $d'$ consist of transverse double points
$P_1, P_2, \cdots, P_m$.
For $i=1,2, \cdots, m$, let $d(1,i)$ and $d(-1,i)$ be two tangle diagrams
satisfying the following contitions.
\begin{itemize}
\item The two tangle diagrams $d(1,i)$ and $d(-1,i)$ equal
$d \cup d'$ with the same height-information as
$d$ and $d'$ except for the neighborhoods of the intersections
of $d$ and $d'$.
\item The branches of $d(1,i)$ and $d(-1,i)$ in the neighborhood of $P_j$
belonging to $d'$ are over crossings for
$j=1, \cdots, i-1$.
\item The branches of $d(1,i)$ and $d(-1,i)$ in the neighborhood of $P_j$
belonging to $d$ are over crossings for 
$j=i+1, \cdots,m$.
\item The two tangle diagrams $d$ and $d'$  are as shown in Figure
\ref{fig_d_and_d_1}  and Figure \ref{fig_d_and_d_2}, respectively, 
in the neighborhood of $P_i$.

\end{itemize}
We denote by $T(1,i)$ a tangle presented by $d(1,i)$ and by $T(-1,i)$ a 
tangle presented by $d(-1,i)$. 
Using lemma \ref{lemm_kouten_sa}, we have 
\begin{equation*}
[[T],[T']] =-\sum^m_{i=1}([T(1,i)]-[T(-1,i)]).
\end{equation*}
We remark  that $T(1,i)$ and $T(-1,i)$ are knots
for $i  \in \shuugou{1, \cdots, m}$.
We have $\epsilon(-\sum^m_{i=1}([T(1,i)]-[T(-1,i)]))
=0$.
This proves the proposition.

\end{proof}

Let $J$ be a finite subset of $\partial \Sigma$.
We define a filtration of $\skein{\Sigma}$ by
$F^n \skein{\Sigma}=(\ker \epsilon)^n$
and a filtration of $\skein{\Sigma,J}$
by $F^n \skein{\Sigma,J} =(F^n \skein{\Sigma}) \skein{\Sigma,J}$.

\begin{thm}
(1) Let $J$ be a finite subset of $\partial \Sigma$.
We have 
\begin{align*}
& F^n \skein{\Sigma} F^m \skein{\Sigma}  \subset F^{n+m} \skein{\Sigma}, \\
& F^n \skein{\Sigma}F^m \skein{\Sigma,J} \subset F^{n+m} \skein{\Sigma,J}, \\
& F^n \skein{\Sigma,J} F^m \skein{\Sigma} \subset F^{n+m} \skein{\Sigma,J},
\end{align*}
for $n$ and $m \in \Z_{\geq 0}$.

(2) we have $[F^i \skein{\Sigma},F^j \skein{\Sigma}] \subset
F^{\mathrm{max}(i+j-1,i,j)}\skein{\Sigma}$ and $\sigma (F^i \skein{\Sigma})(F^j \skein{\Sigma,J})
\subset F^{\mathrm{max}(i+j-1,i-1,j)}\skein{\Sigma,J}$ for $i$ and $j \in \Z{\geq 0}$.
\end{thm}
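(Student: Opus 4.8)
The plan is to reduce everything to two facts already in hand: $\epsilon$ is an algebra homomorphism with $\epsilon(A-\gyaku{A})=0$, so $\ker\epsilon$ is a two-sided ideal containing the central element $A-\gyaku{A}$; and the bracket and action $\sigma$ of Definition \ref{definition_Lie_bracket} are well defined (by Proposition \ref{tensor_sa} and the injectivity of $-A+\gyaku{A}$) and satisfy the Leibniz rules. Since all the operations are $\Q[A,\gyaku{A}]$-bilinear, I would test every inclusion on pure products $a_1\cdots a_i$ ($a_k\in\ker\epsilon$) and on $b_1\cdots b_j w$ ($b_k\in\ker\epsilon$, $w\in\skein{\Sigma,J}$), which span $F^i\skein{\Sigma}$ and $F^j\skein{\Sigma,J}$. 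The first two inclusions of (1) are then immediate from associativity and the left-module axiom, since $(\ker\epsilon)^n(\ker\epsilon)^m=(\ker\epsilon)^{n+m}$. The third inclusion is the first genuinely nontrivial point: I would move a right factor $a\in\ker\epsilon$ across $v$ using the rearrangement $va=av+(A-\gyaku{A})\sigma(a)(v)$ of the definition of $\sigma$. Writing an element of $F^n\skein{\Sigma,J}$ as $bv$ with $b\in(\ker\epsilon)^n$ gives $(bv)a=(ba)v+(A-\gyaku{A})b\,\sigma(a)(v)$, and both $ba$ and $(A-\gyaku{A})b$ lie in $(\ker\epsilon)^{n+1}$ (the latter because $A-\gyaku{A}$ is central and in $\ker\epsilon$), so $(bv)a\in F^{n+1}\skein{\Sigma,J}$. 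This settles $m=1$, and an induction on $m$, reassociating the right action, gives the general case.

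For the bracket estimate in (2) I would start from $[\skein{\Sigma},\skein{\Sigma}]\subset\ker\epsilon$ (Lemma \ref{lemm_skein_kernel}), which says $[x,b]\in\ker\epsilon$ for all $x$ and all $b\in\ker\epsilon$. Expanding $[x,b_1\cdots b_j]=\sum_k b_1\cdots b_{k-1}[x,b_k]b_{k+1}\cdots b_j$ by the Leibniz rule in the second slot and using $[x,b_k]\in\ker\epsilon$ shows $[F^0\skein{\Sigma},F^j\skein{\Sigma}]\subset F^j\skein{\Sigma}$. Expanding once more in the first slot, $[a_1\cdots a_i,y]=\sum_k a_1\cdots a_{k-1}[a_k,y]a_{k+1}\cdots a_i$ with $[a_k,y]\in F^j\skein{\Sigma}$, gives $[F^i\skein{\Sigma},F^j\skein{\Sigma}]\subset F^{i+j-1}\skein{\Sigma}$ for $i\geq1$. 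Combined with the antisymmetry $[x,y]=-[y,x]$, these yield the symmetric bound $F^{\max(i+j-1,i,j)}\skein{\Sigma}$.

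The action estimate is handled in the same spirit with the three Leibniz rules for $\sigma$, split along the exponent $\max(i+j-1,i-1,j)$. For $i=0$, expanding $\sigma(x)(b_1\cdots b_j w)$ by $\sigma(x)(yv)=[x,y]v+y\sigma(x)(v)$ and using $[x,b_k]\in\ker\epsilon$ gives, by induction on $j$, $\sigma(F^0\skein{\Sigma})(F^j\skein{\Sigma,J})\subset F^j\skein{\Sigma,J}$. For $j=0$, expanding $\sigma(a_1\cdots a_i)(w)$ by $\sigma(xy)(v)=x\sigma(y)(v)+\sigma(x)(v)y$ produces the right-multiplication $\sigma(a_1)(w)\,(a_2\cdots a_i)$, and here part (1) is exactly what is needed: $F^0\skein{\Sigma,J}\,F^{i-1}\skein{\Sigma}\subset F^{i-1}\skein{\Sigma,J}$, so $\sigma(F^i\skein{\Sigma})(F^0\skein{\Sigma,J})\subset F^{i-1}\skein{\Sigma,J}$. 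Finally, for $i,j\geq1$ I would peel the factors $b_k$ off one at a time via $\sigma(x)(yv)=[x,y]v+y\sigma(x)(v)$, feeding in the bracket bound $[x,b_k]\in F^i\skein{\Sigma}$ from the previous paragraph together with the $j=0$ case, to reach $F^{i+j-1}\skein{\Sigma,J}$.

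I expect the main difficulty to be conceptual bookkeeping rather than any single hard computation. The off-by-one exponents come entirely from the interplay of $\epsilon(A-\gyaku{A})=0$ with the divisibility of commutators by $A-\gyaku{A}$ (Proposition \ref{tensor_sa}): commuting a right factor past a left one gains one filtration degree (the $+1$ in (1)), while the $-1$ in the $\sigma$-bound records the degree lost when a commutator is divided by $A-\gyaku{A}$ inside the definition of $\sigma$. The crucial organizing principle is to arrange every induction so that one only ever multiplies by $A-\gyaku{A}$ and never divides by it; this lets the argument avoid proving any delicate divisibility statement within the filtration, routing the one genuinely asymmetric estimate (the $j=0$ case of the $\sigma$-bound) through the third inclusion of part (1) instead.
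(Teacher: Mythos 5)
Your proposal is correct and takes essentially the same approach as the paper: part (1) rests on the commutation relation $va = av + (A-\gyaku{A})\sigma(a)(v)$, i.e.\ on Proposition \ref{tensor_sa} together with $A-\gyaku{A}\in\ker\epsilon$, and part (2) on the Leibniz rules combined with Lemma \ref{lemm_skein_kernel}. The paper's own proof is only a two-line sketch of this strategy, and your inductions (including correctly routing the $j=0$ case of the $\sigma$-estimate through the third inclusion of part (1)) supply exactly the details it leaves implicit.
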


\begin{proof}
In order to show, for $i$ and $j \in \Z_{\geq 0}$,
\begin{align*}
F^i \skein{\Sigma} F^j \skein{\Sigma,J} \subset F^{i+j} \skein{\Sigma,J}, \\
F^j \skein{\Sigma,J} F^i \skein{\Sigma} \subset F^{i+j} \skein{\Sigma,J},
\end{align*}
it suffices to prove
\begin{equation*}
(\ker \epsilon) \skein{\Sigma,J} = \skein{\Sigma,J} (\ker \epsilon),
\end{equation*}
which is obvious by Proposition \ref{tensor_sa}.
This proves (1).

Using the Leibniz rule,
Lemma \ref{lemm_skein_kernel} show, for $i$ and $j \in \Z{\geq 0}$, 
\begin{align*}
[F^i \skein{\Sigma},F^j \skein{\Sigma}] &\subset
F^{\mathrm{max}(i+j-1,i,j)} \skein{\Sigma}, \\
\sigma (F^i \skein{\Sigma})(F^j \skein{\Sigma,J})
&\subset F^{\mathrm{max}(i+j-1,i-1,j)}\skein{\Sigma,J}.
\end{align*}
This proves (2).

\end{proof}

Let $J$ be a finite subset of $\partial \Sigma$.
We define an action of $\mathcal{M}(\Sigma)$ on $\skein{\Sigma,J}$
by $\xi [T] =[\xi T]$ for $\xi \in \mathcal{M}(\Sigma)$ and
$T \in \mathcal{T}(\Sigma,J)$.
We have 
\begin{align*}
&\xi (F^n \skein{\Sigma}) =F^n \skein{\Sigma}, \\ 
&\xi (F^n \skein{\Sigma,J}) = F^m \skein{\Sigma,J} \\
\end{align*}
for $\xi \in \mathcal{M} (\Sigma)$ and
$n \in \Z_{\geq 0}$.

\begin{rem}
We have $\mathrm{dim}_{\Q} (F^n \skein{\Sigma,J}/F^{n+1} \skein{\Sigma,J}) < \infty$.
The proof will appear in \cite{TsujiCSAII}.
\end{rem}

Let $J$ be a finite subset of $\partial \Sigma$.
We consider the topology on $\skein{\Sigma}$ induced by
the filtration $\filtn{F^n \skein{\Sigma}}$, and denote its completion
by $\widehat{\skein{\Sigma}} \defeq
\underleftarrow{\lim}_{i \rightarrow \infty}\skein{\Sigma}/
F^i \skein{\Sigma}$.
We call $\widehat{\skein{\Sigma}}$ the completed skein algebra.
We also consider the topology on $\skein{\Sigma,J}$ induced by
the filtration $\filtn{F^n \skein{\Sigma,J}}$, and denote its completion
by $\widehat{\skein{\Sigma,J}} \defeq
\underleftarrow{\lim}_{i \rightarrow \infty}\skein{\Sigma,J} /
F^i \skein{\Sigma,J}$.
We call $\widehat{\skein{\Sigma,J}}$ the completed skein module.
The completed skein algebra $\widehat{\skein{\Sigma}}$ 
has a filtration $\widehat{\skein{\Sigma}} = F^0 \widehat{\skein{\Sigma}} \supset 
F^1 \widehat{\skein{\Sigma}} \supset F^2 \widehat{\skein{\Sigma}} \supset \cdots$
such that $\widehat{\skein{\Sigma}}/F^n \widehat{\skein{\Sigma}} \simeq
\skein{\Sigma}/F^n \skein{\Sigma}$ for $n \in \Z_{\geq 0}$.
The completed skein module $\widehat{\skein{\Sigma,J}}$ 
also has a filtration $\widehat{\skein{\Sigma,J}} = F^0 \widehat{\skein{\Sigma,J}} \supset 
F^1\widehat{\skein{\Sigma,J}} \supset F^2 \widehat{\skein{\Sigma,J}} \supset \cdots$
such that $\widehat{\skein{\Sigma,J}}/F^n \widehat{\skein{\Sigma,J}} 
\simeq
\skein{\Sigma,J}/F^n \skein{\Sigma,J}$ for $n \in \Z_{\geq 0}$.
We remark that the completed skein algebra $\widehat{\skein{\Sigma}}$ is 
an associative $\Q [[A+1]]$-algebra and that
the completed skein module $\widehat{\skein{\Sigma,J}}$
is a $\Q[[A+1]]$-module.
The set $\shuugou{(\skein{\Sigma,J},\filtn{F^n\skein{\Sigma,J}})|J \subset \partial \Sigma,
\sharp J < \infty}$ is denoted by $\Theta (\Sigma)$.

We denote by $\check{\mathcal{M}} (\Sigma)
\subset  \mathcal{M} (\Sigma)$ the subset 
consisting of  elements $\xi$ satisfying the condition
that, for any finite subset $J$ of $\partial{\Sigma}$, any non-negative integer $m$
and any element $v \in F^m \skein{\Sigma,J}$,
there exists a non-negative integer
$N$ such that $j \geq N \Rightarrow (1-\xi)^j 
(v) \in F^{m+1} \skein{\Sigma,J}$.

For $\xi \in \check{\mathcal{M}}(\Sigma)$
 and a finite subset $J$ of $\partial \Sigma$ ,
 a $\Q[[A+1]]$-module homomorphism
$\log(\xi) :\widehat{\skein{\Sigma , J}} \to \widehat{\skein{\Sigma ,J}}$ is defined by
$\log(\xi)(v) = \sum_{i=1}^\infty \frac{-1}{i} (\id - \xi)^i (v)$.
For $\xi \in \check{\mathcal{M}}(\Sigma)$, $x \in \widehat{\skein{\Sigma}}$
 and $z \in \widehat{\skein{\Sigma,J}}$,
since $\xi(xz)=\xi(x)\xi(z)$ and $\xi(zx)=\xi(z)\xi(x)$,
$\log(\xi)$ satisfies the Leibniz rule
\begin{align*}
&\log (\xi)(xz) =\log (\xi)(x)z +x\log (\xi)(z), \\
&\log (\xi)(zx) = \log (\xi)(z)x+z\log(\xi)(x).
\end{align*}

\begin{df}
\label{definition_quantization}
For $\xi \in \check{M}(\Sigma,\partial \Sigma)$, an element $ x_\xi \in 
\widehat{\skein{\Sigma}}$ 
has a skein representative of $\xi$ by $((\skein{\Sigma},\filtn{\skein{\Sigma}}), 
\Theta (\Sigma))$ if we have 
\begin{equation*}
\log (\xi) = \sigma (x_\xi) : \widehat{\skein{\Sigma,J}} \to \widehat{\skein{\Sigma,J}},
\end{equation*}
in other words
\begin{equation*}
\xi (\cdot) =\exp (\sigma(x_\xi)): \widehat{\skein{\Sigma,J}} \to \widehat{\skein{\Sigma,J}},
\end{equation*}
for  a finite subset $J$ of $\partial \Sigma$ .
\end{df}

\section{Dehn twists}
\label{section_Dehn_twists}

In this section we show the following theorem.

\begin{thm}
Let $\Sigma$ be a compact connected oriented surface and $c$
a simple closed curve.
We also denote by $c$ an element of $\skein{\Sigma}$
represented by a knot presented by the simple closed curve $c$.
Then we have $t_c \in \check{\mathcal{M}} (\Sigma)$, and
 $\frac{-A+\gyaku{A}}{4\log(-A)} (\arccosh  (-\frac{c}{2}))^2 \in
\widehat{\skein{\Sigma}}$ 
has a skein representative of $t_c \in \check{\mathcal{M}}(\Sigma
)$ by $((\skein{\Sigma},\filtn{F^n\skein{\Sigma}}), \Theta (\Sigma))$
in the sense of Definition \ref{definition_quantization}.
Here $\frac{-A+A^{-1}}{4 \log (-A)}$ is an element of $ \Q [[A+1]]$
and  $(\arccosh (-\frac{c}{2}))^2$
is $ \sum_{i=0}^\infty \frac{i! i!}{(i+1) (2i+1)!} (1-\frac{c^2}{4})^{i+1} \in
\Q  [[c+2]]$.
\end{thm}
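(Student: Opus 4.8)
The plan is to verify the formula by reducing to a local computation in an annular neighborhood of $c$ and then propagating it through the skein module via the action $\sigma$. First I would examine the geometry: the Dehn twist $t_c$ is supported in an annulus $N$ around $c$, so its effect on a tangle $T$ is concentrated on the arcs of $T$ crossing $c$. The key structural fact I want to exploit is the change-of-crossing identity of Lemma \ref{lemm_kouten_sa}, namely $[T_1]-[T_2]=(A-\gyaku{A})([T_3]-[T_4])$, together with the skein relation itself. Applying $t_c$ to a tangle and resolving the resulting crossings with $c$ should express $t_c[T]-[T]$ as a combination of skein elements whose crossing number with $c$ is controlled, which is what is needed to locate it in the filtration.

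The central step is to reduce to the case of an annulus. Property (2) of the preceding theorem shows $\sigma(F^i\skein{\Sigma})(F^j\skein{\Sigma,J}) \subset F^{\mathrm{max}(i+j-1,i-1,j)}\skein{\Sigma,J}$, which governs how powers of $\sigma(c)$ raise the filtration degree; combined with the fact that $c\in\ker\epsilon$ up to the scalar $-2$ (so that $c+2\in F^1\skein{\Sigma}$), this is what makes the power series in $(1-c^2/4)$ converge in the completed algebra and what will give $t_c\in\check{\mathcal{M}}(\Sigma)$. I would first establish $t_c\in\check{\mathcal{M}}(\Sigma)$ by showing directly that $(1-t_c)^j(v)$ raises filtration degree for $j$ large, using the crossing-resolution estimate above. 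Then, to identify the representative, I would compute $\log(t_c)=\sigma(x_c)$ on a single arc meeting $c$ transversally: the action of $\sigma(c^k)$ on such an arc inserts $k$ parallel copies of $c$, and by the Chebyshev-type recursion governing powers of the core curve in $\skein{S^1\times I}=\Q[A,\gyaku{A}][l]$, the operator $\sigma$ acts on the arc by a function of $c$ that one computes explicitly. Matching this function to $\frac{-A+\gyaku{A}}{4\log(-A)}(\arccosh(-\frac{c}{2}))^2$ is the analytic heart of the argument.

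The main obstacle, I expect, is precisely this last matching: one must show that the operator realizing $\log(t_c)$ on an arc transverse to $c$ equals $\sigma$ of the claimed power series. Concretely, the eigenvalue of $t_c$ on the part of $\skein{S^1\times I, J}$ with a fixed number of strands is a power of $-A$ determined by the twisting, while $\sigma(c)$ acts through multiplication by the core-curve element $l$; so the identity to prove becomes an equality between $\log$ of a power of $-A$ and a specific function of $l$ applied via $\sigma$. The substitution $-l/2=\cosh\theta$ linearizes the Chebyshev recursion, turning the twist eigenvalue into $\exp$ of a quadratic in $\theta$, and the factor $\frac{-A+\gyaku{A}}{4\log(-A)}$ is exactly the normalization converting the commutator bracket $\frac{1}{-A+\gyaku{A}}(xy-yx)$ into the twisting parameter $\log(-A)$. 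The bookkeeping needed to make this substitution rigorous in the completion, and to check that the local identity on one arc extends to all of $\skein{\Sigma,J}$ via the Leibniz rule $\sigma(x)(v\boxtimes v')=\sigma(x)(v)\boxtimes v'+v\boxtimes\sigma(x)(v')$, is where the real work lies.
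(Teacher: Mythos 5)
Your proposal follows essentially the same route as the paper: reduce to the annulus around $c$ via gluing maps, prove $t_c \in \check{\mathcal{M}}(\Sigma)$ by a filtration estimate resting on $l+2 \in F^1$, compute $\log(t)$ on a single transverse arc via the Chebyshev linearization $(T+1)_n(q+\gyaku{q})=(q+1)^n+(\gyaku{q}+1)^n$ matching it to $\frac{-A+\gyaku{A}}{4\log(-A)}(\arccosh(-\frac{l}{2}))^2$, and then globalize to many strands by the Leibniz rule $\sigma(x)(v\boxtimes v')=\sigma(x)(v)\boxtimes v'+v\boxtimes\sigma(x)(v')$. This is exactly the paper's decomposition into the maps $\varpi_{J'}$, Lemma \ref{lemm_Dehn_twist}(1), Lemma \ref{lemm_Dehn_twist_toy}, and the proof of Lemma \ref{lemm_Dehn_twist}(2).
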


We denote by $S^1 \defeq \R/\Z$,
by $c_l$ a simple closed curve $S^1 \times \shuugou{\frac{1}{2}}$ in
$S^1 \times I$, by $t$ the Dehn twist along $c_l$
and by $l$ an element of $\skein{S^1 \times I}$ represented by
the knot presented by $c_l$.
We fix an embedding $\iota :S^1 \times I \to \Sigma$ such that 
$\iota (c_l) =c$.

We assume that $\iota (S^1 \times I)$ separate $\Sigma$ into two surfaces
$\Sigma^1 $ and $\Sigma^2$.
For a finite set $J' \in S^1$, we consider the trilinear map
\begin{align*}
\varpi_{J'} : \skein{\Sigma^1, (J \cap \partial \Sigma^1) \cup \iota (J'  \times \shuugou{1})}
\times  \skein{S^1 \times I,J'\times \shuugou{0,1}} \\
\times \skein{\Sigma^2,(J \cap \partial \Sigma^1) \cup \iota (J' \times \shuugou{0})} 
\to \skein{\Sigma,J}
\end{align*}
defined by $\varpi_{J'} ([T_1],[T_2],[T_3]) =[T_1T_2T_3]$ for 
$T_1 \in \mathcal{T}(\Sigma^1, (J \cap \partial \Sigma^1) \cup \iota (J'  \times \shuugou{1}))$,
$T_2 \in \mathcal{T}(S^1 \times I,J'\times \shuugou{0,1})$ 
and $T_3 \in \mathcal{T} (
\Sigma^2,(J \cap \partial \Sigma^1) \cup \iota (J' \times \shuugou{0}))$.
Here we denote by $T_1T_2T_3$ the tangle presented by
$d_1 \cup \iota (d_2) \cup d_3$, respectively, where 
$d_1$, $d_2$ and $d_3$ present $T_1$, $T_2$ and $T_3$, respectively.
We remark that $d_1 \cup \iota (d_2) \cup d_3$ must be
smoothed out in the neighborhood of $\iota (S^1 \times \shuugou{0,1})$.
Then we have the followings.
\begin{itemize}
\item The set 
\begin{align*}
\bigcup_{J'} \varpi_{J'}(\skein{\Sigma^1, (J \cap \partial \Sigma^1) \cup \iota (J'  \times \shuugou{1})}
\times  \skein{S^1 \times I,J'\times \shuugou{0,1}}  \\
\times \skein{\Sigma^2,(J \cap \partial \Sigma^1) \cup \iota (J' \times \shuugou{0})})
\end{align*}
generates $\skein{\Sigma,J}$
as $\Q[A,\gyaku{A}]$-module.
\item The map $\varpi_{J'}$ preserves the filtrations, in other words,
\begin{align*}
 \varpi_{J'}(\skein{\Sigma^1, (J \cap \partial \Sigma^1) \cup \iota (J'  \times \shuugou{1})}
\times F^n  \skein{S^1 \times I,J'\times \shuugou{0,1}} \\
\times \skein{\Sigma^2,(J \cap \partial \Sigma^1) \cup \iota (J' \times \shuugou{0})}) 
\subset F^n \skein{\Sigma,J}.
\end{align*}
\item We have  $t_{c} \circ \varpi_{J'} = \varpi_{J'} \circ (\id ,t_{c_l},\id)$
and $\sigma(\iota(x)) \circ \varpi_{J'} = \varpi_{J'} \circ (\id,\sigma (x), \id)$
for $x \in \skein{S^1 \times I}$
\end{itemize}

We assume that $\Sigma \backslash \iota (S^1 \times (0,1))$ is a connected surface 
$\Sigma^1$.
For a finite set $J' \in S^1$, we consider the bilinear map
\begin{align*}
\varpi_{J'} : \skein{\Sigma^1, J \cup \iota (J'  \times \shuugou{0,1})}
\times  \skein{S^1 \times I,J'\times \shuugou{0,1}} 
\to \skein{\Sigma,J}
\end{align*}
defined by $\varpi_{J'} ([T_1],[T_2]) =[T_1T_2]$ for 
$T_1 \in \mathcal{T}(J \cup \iota (J'  \times \shuugou{0,1}))$ and
$T_2 \in \mathcal{T}(S^1 \times I,J'\times \shuugou{0,1})$.
Here we denote by $T_1T_2$ the tangle presented by
$d_1 \cup \iota (d_2) $, respectively, where 
$d_1$ and $d_2$ present $T_1$ and $T_2$, respectively.
We remark that $d_1 \cup \iota (d_2)$ must be
smoothed out in the neighborhood of $\iota (S^1 \times \shuugou{0,1})$.
Then we have the followings.
\begin{itemize}
\item The set 
\begin{align*}
\bigcup_{J'} \varpi_{J'}(\skein{\Sigma^1, J \cup \iota (J'  \times \shuugou{0,1})}
\times  \skein{S^1 \times I,J'\times \shuugou{0,1}})
\end{align*}
generates $\skein{\Sigma,J}$
as $\Q[A,\gyaku{A}]$-module.
\item The map $\varpi_{J'}$ preserves the filtrations, in other words,
\begin{align*}
 \varpi_{J'}(\skein{\Sigma^1, J \cup \iota (J'  \times \shuugou{0,1})}
\times F^n  \skein{S^1 \times I,J'\times \shuugou{0,1}} ) 
\subset F^n \skein{\Sigma,J}.
\end{align*}
\item We have $t_{c} \circ \varpi_{J'} = \varpi_{J'} \circ (\id ,t_{c_l})$
and $\sigma(\iota(x)) \circ \varpi_{J'} = \varpi_{J'} \circ (\id,\sigma (x))$
for $x \in \skein{S^1 \times I}$.
\end{itemize}
Hence, it suffices to show the following lemma. 

\begin{lemm}
\label{lemm_Dehn_twist}
Fix a positive integer $m$.
Choose points $p_1=\frac{1}{2m}$, $\cdots$, $p_i =
\frac{i}{2m}$, $\cdots$, $p_m =\frac{m}{2m}$ in $S^1$.
We denote by $r_i^0$ an element of
$\skein{S^1 \times I, \shuugou{(p_i,0),(p_i,1)}}$ represented by the tangle
presented by $\shuugou{p_i} \times I$.
Then we have the following.

\begin{itemize}
\item(1)We have $(t-1)^{2n+m}(r_1^0 \boxtimes r_2^0 
\boxtimes \cdots \boxtimes r_m^0) \in F^n\skein{S^1 \times I,\shuugou{p_1, \cdots,p_m}
\times \shuugou{0,1}}$. 
\item (2)We have
\begin{equation*}
\log (t) (r_1^0 \boxtimes r_2^0 
\boxtimes \cdots \boxtimes r_m^0) =
\sigma(\frac{-A+\gyaku{A}}{4\log(-A)}(\arccosh  (-\frac{l}{2}))^2)(r_1^0 \boxtimes r_2^0 
\boxtimes \cdots \boxtimes r_m^0
).
\end{equation*}
\end{itemize}
\end{lemm}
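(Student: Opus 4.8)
The plan is to reduce the whole statement to the one-arc case $m=1$ on the annulus skein module $M_1 \defeq \skein{S^1 \times I, \shuugou{(p,0),(p,1)}}$, and then reassemble for general $m$ using that $t$ is multiplicative and $\sigma(l)$ is a derivation with respect to $\boxtimes$. First I would fix an algebraic model of $M_1$: by Theorem \ref{thm_przy} it is free on $\shuugou{r^k \boxtimes l^s}$, where $r^k$ is the arc winding $k$ times, so that the Dehn twist is the shift $t(r^k)=r^{k+1}$ and the core circles $l^s$ are spectators. Resolving the single crossing of the core circle against an arc by the skein relation gives, as operators on $M_1$, that left and right multiplication by $l$ are $L=At+\gyaku{A}t^{-1}$ and $R=\gyaku{A}t+At^{-1}$, whence $\sigma(l)=\frac{1}{-A+\gyaku{A}}(L-R)=t^{-1}-t$. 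Since $\skein{S^1\times I}$ is commutative these operators commute, and for any polynomial $p$ one has $\sigma(p(l))=\frac{1}{-A+\gyaku{A}}(p(L)-p(R))$, extended to power series by continuity.

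For (1) I would first treat $m=1$. From the identity $At(l+2)=(At+1)^2$ on $M_1$ together with $At+1=A(t-1)+(A+1)$ I obtain the operator relation
\[
(t-1)^2 = \gyaku{A}t(l+2)-2\gyaku{A}(A+1)(t-1)-\gyaku{A}^2(A+1)^2 .
\]
Because $l+2$ and $A+1$ lie in $\ker\epsilon=F^1\skein{S^1\times I}$ and $t$ preserves the filtration, each summand carries $F^i$ into $F^{i+1}$, so $(t-1)^2(F^i)\subset F^{i+1}$ and by induction $(t-1)^M(r^0)\in F^{\lfloor M/2\rfloor}$. For general $m$ I would induct writing $w=r_1^0\boxtimes b$ with $b$ an $(m-1)$-fold product: since $t(a\boxtimes b)=t(a)\boxtimes t(b)$ and $t$ commutes with $t-1$,
\[
(t-1)^M(r_1^0\boxtimes b)=\sum_{j=0}^M\binom{M}{j}(t-1)^j(r_1^0)\boxtimes t^{\,j}(t-1)^{M-j}(b),
\]
and using $F^p\boxtimes F^q\subset F^{p+q}$ this yields $(t-1)^M(w)\in F^{\lfloor(M-m+1)/2\rfloor}$. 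Taking $M=2n+m$ gives exactly $(t-1)^{2n+m}(w)\in F^n$.

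For (2) I would again reduce to a single arc. The map $t$ is a $\boxtimes$-automorphism, so $\log(t)$ is a $\boxtimes$-derivation; and $\sigma(x)$ is a $\boxtimes$-derivation for every $x$ by the Leibniz rule for $\sigma$. Two derivations that agree on the $\boxtimes$-factors $r_i^0$ agree on their product, so it suffices to prove $\log(t)(r^0)=\sigma(g(l))(r^0)$ on $M_1$, where $g(l)=\frac{-A+\gyaku{A}}{4\log(-A)}(\arccosh(-\frac{l}{2}))^2$. By the formula above, the right-hand side is $\frac{1}{4\log(-A)}\big((\arccosh(-L/2))^2-(\arccosh(-R/2))^2\big)$. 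Now $-L/2=\cosh(\log(-A)+\log t)$, because $e^{\log(-A)+\log t}=-At$ and its inverse sum to $-At-\gyaku{A}t^{-1}=-L$; hence $\arccosh(-L/2)=\log(-A)+\log t$, and likewise $\arccosh(-R/2)=-\log(-A)+\log t$. Their difference of squares is $4\log(-A)\log t$, so the right-hand side equals $\log(t)$, as required.

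The formal steps (the binomial estimate and the difference of squares) are routine once this model is in place; the real work lies in justifying the model. The main obstacle is establishing the operator identities $L=At+\gyaku{A}t^{-1}$, $R=\gyaku{A}t+At^{-1}$ and $\sigma(l)=t^{-1}-t$ from the skein geometry, and then the convergence statements that license substituting the commuting operators $L,R,t$ into the power series $\arccosh$ and $\log$ and treating $\arccosh(\cosh y)=y$ as a valid identity in the completion $\widehat{M_1}$. This is precisely where the nilpotency-mod-filtration from part (1)—namely $l+2,\ A+1\in\ker\epsilon$ and $(t-1)^M(r^0)\in F^{\lfloor M/2\rfloor}$—is needed, to guarantee that $g(L)$, $\log t$ and $\sigma(g(l))$ are well-defined and that the formal identity relating them holds.
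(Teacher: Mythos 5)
Your proposal is correct and follows essentially the same route as the paper: the same reduction to the annulus module, the same key identity behind part (1) (your operator relation $(t-1)^2 = \gyaku{A}t(L+2)-2\gyaku{A}(A+1)(t-1)-\gyaku{A}^2(A+1)^2$ applied to $r^0$ is exactly the paper's $(t(r_i^0)-r_i^0)^2=-(l+2)t(r_i^0)+(A+1)t^2(r_i^0)+(\gyaku{A}+1)r_i^0$), and the same difference-of-squares computation for part (2), since the paper's Chebyshev manipulation $(T+1)_n(q+\gyaku{q})=(q+1)^n+(\gyaku{q}+1)^n$ with $T_j(l)$ acting on the arc as $(\pm A t)^j+(\pm\gyaku{A}\gyaku{t})^j$ is precisely your statement that $-L/2=\cosh(\log(-A)+\log t)$ and $-R/2=\cosh(-\log(-A)+\log t)$, followed in both cases by the Leibniz/derivation argument over the $\boxtimes$-factors. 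The convergence caveats you flag are handled no more rigorously in the paper, so your write-up matches it in both substance and level of detail.
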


\begin{proof}[Proof of Lemma \ref{lemm_Dehn_twist}(1)]
We need  a $\Q [A,\gyaku{A}]$-bilinear map
$(\cdot)  (\cdot) :\skein{S^1 \times I, J \times \shuugou{0,1}}
\times \skein{S^1 \times I,J \times \shuugou{0,1}}
\to \skein{S^1 \times I,J \times \shuugou{0,1}}$ defined by
$[T_1][T_2] =[T_1T_2]$
for any finite subset $J \subset S^1$.
Here we denote by $T_1  T_2$ the tangle presented by
$\mu_1 (D_1) \cup \mu_2 (D_2)$ where 
we choose tangle diagrams $D_1$ and $D_2$
presenting $T_1$ and $T_2$, respectively,
and embeding maps $\mu_1$ and $\mu_2 :S^1 \times I 
\to S^1\times I$ defined by $\mu_1(\theta,t) =
(\theta,\frac{t+1}{2})$ and $\mu_2(\theta,\frac{t}{2})$.
We remark that $\mu_1 (D_1) \cup \mu_2 (D_2)$ must be
smoothed out in the neighborhood of $c_l$.
By definition we have $(F^k \skein{S^1 \times I,J \times \shuugou{0,1}})
 (F^l \skein{S^1 \times I,J \times \shuugou{0,1}}) \subset F^{k+l}
\skein{S^1 \times I,J \times \shuugou{0,1}}$ for $k$ and $l \in \Z_{\geq 0}$
and $x  y =y  x$ for $x$ and $y \in \skein{S^1 \times I,\shuugou{p} \times \shuugou{0,1}}$
for $p \in S^1$.
For $i =1, \cdots,m$, we denote by $x_i \defeq r_1^0 \boxtimes 
\cdots \boxtimes r_{i-1}^0 \boxtimes t(r_i^0) \boxtimes r_{i+1}^0
\boxtimes \cdots \boxtimes r_m^0$
and by ${x_i}^{-1} \defeq  r_1^0 \boxtimes 
\cdots \boxtimes r_{i-1}^0 \boxtimes t^{-1}(r_i^0) \boxtimes r_{i+1}^0
\boxtimes \cdots \boxtimes r_m^0$.
We simply denote $\id  \defeq r_1^0 \boxtimes r_2^0 
\boxtimes \cdots \boxtimes r_m^0$.
We remark that $x_i  {x_i}^{-1} =\id$.

We have 
\begin{align*}
&(t-1)^{2n+m} (\id) =(x_1x_2 \cdots x_m-\id)^{ 2n+m}=(\sum_{i=1}^m x_1  x_2  \cdots   x_{i-1}  (x_{i-1}-\id))^{ 2n+m}. \\ 
\end{align*}
Since $(t(r_i^0)-r_i^0)^{ 2} =-(l+2) t(r_i^0) +(A+1)t^2 (r_i^0) +(A^{-1}+1)r_i^0
\in F^1 \skein{S^1 \times I, p_i \times \shuugou{0,1}}$,
we have $(t-1)^{2n+m} (\id) \in F^n \skein{S^1 \times I,\shuugou{P_1, \cdots , P_m}
\times  \shuugou{0,1}}$.
This proves the part (1) of the lemma.
\end{proof}

To prove Lemma \ref{lemm_Dehn_twist} (2), we need the following lemma.

\begin{lemm}\label{lemm_Dehn_twist_toy}
We have $\sigma(\frac{-A+\gyaku{A}}{4\log(-A)}(\arccosh  (-\frac{l}{2}))^2)(r_i^0)
=\log (t)(r_i^0)$ for $i=1, \cdots,m$.
\end{lemm}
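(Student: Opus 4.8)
The plan is to reduce the whole statement, via the operators already introduced, to an explicit computation on the annulus $S^1\times I$ with the two marked points $\shuugou{(p_i,0),(p_i,1)}$, where the Dehn twist is transparent. For $k\in\Z$ write $a_k\defeq t^k(r_i^0)$ for the essential arc joining the two marked points and winding $k$ times around the annulus, so that $a_0=r_i^0$ and $t(a_k)=a_{k+1}$; thus on the $\Q[A,\gyaku{A}]$-span of $\shuugou{a_k}_{k\in\Z}$ the twist $t$ is simply the (invertible) shift. By Theorem \ref{thm_przy} these arcs are part of a basis of $\skein{S^1\times I,\shuugou{(p_i,0),(p_i,1)}}$, the remaining basis elements carrying parallel copies of the core $l$, and I would first check that every operator occurring below preserves the closure $W$ of their span, so that the entire argument takes place inside $W$.

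First I would compute left and right multiplication by the core $l$ as operators on $W$. Since $c_l=S^1\times\shuugou{\frac12}$ meets $a_k$ transversely in a single point, stacking $l$ above $a_k$ gives a diagram with one crossing whose two smoothings each merge the loop into the arc (so that no core circle is produced); resolving by the skein relation yields $l\boxtimes a_k=A\,a_{k+1}+\gyaku{A}\,a_{k-1}$, and stacking $l$ below switches the crossing and gives $a_k\boxtimes l=\gyaku{A}\,a_{k+1}+A\,a_{k-1}$ (the assignment of $A$ versus $\gyaku{A}$, hence the overall sign, being dictated by the crossing conventions of Section \ref{section_def_tangles}). Writing $L$ and $R$ for these operators, I obtain
\begin{equation*}
L=A\,t+\gyaku{A}\,t^{-1}, \qquad R=\gyaku{A}\,t+A\,t^{-1},
\end{equation*}
which commute with one another and with $t$. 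Consequently $f(l)\boxtimes v=f(L)(v)$ and $v\boxtimes f(l)=f(R)(v)$ for any power series $f$, and therefore $\sigma(f(l))=\frac{1}{-A+\gyaku{A}}\bigl(f(L)-f(R)\bigr)$ on $W$.

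The heart of the matter is then a $\cosh$/$\arccosh$ identity for these operators. Put $g(l)\defeq(\arccosh(-\frac{l}{2}))^2$ and $\Theta\defeq\log(-A)+\log(t)$. Solving $\cosh\theta=-\frac{L}{2}$ at the operator level, the Ansatz $e^{\theta}=-A\,t$ gives $e^{\theta}+e^{-\theta}=-A\,t-\gyaku{A}\,t^{-1}=-L$, so $\arccosh(-\frac{L}{2})=\log(-A\,t)=\Theta$; since $(\arccosh(\cdot))^2$ is even it is an unambiguous power series in $\cosh\theta-1$, whence $g(L)=\Theta^2$. Likewise $e^{\phi}=-\gyaku{A}\,t$ gives $\arccosh(-\frac{R}{2})=\log(-\gyaku{A}\,t)=\log(t)-\log(-A)$, so $g(R)=(\log(t)-\log(-A))^2$. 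A difference of squares then gives
\begin{equation*}
g(L)-g(R)=\bigl(\log(-A)+\log(t)\bigr)^2-\bigl(\log(t)-\log(-A)\bigr)^2=4\log(-A)\,\log(t),
\end{equation*}
and combining with the formula for $\sigma(g(l))$ above and $x=\frac{-A+\gyaku{A}}{4\log(-A)}g(l)$,
\begin{equation*}
\sigma(x)=\frac{-A+\gyaku{A}}{4\log(-A)}\,\frac{1}{-A+\gyaku{A}}\bigl(g(L)-g(R)\bigr)=\frac{1}{4\log(-A)}\cdot 4\log(-A)\,\log(t)=\log(t)
\end{equation*}
on $W$; evaluating at $r_i^0=a_0$ is exactly the assertion of the lemma.

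The main obstacle is not the algebra but justifying these manipulations as convergent identities of operators on the completion $\widehat{W}$. I would show that $1-\frac{L^2}{4}=-\frac14(A\,t-\gyaku{A}\,t^{-1})^2$ (and the analogue for $R$) raises the filtration: this is exactly the content of part (1), which shows $(t-1)$ is topologically nilpotent on $r_i^0$, so the power series defining $\log(t)$, $\log(-A\,t)$ and $g(L)$, $g(R)$ each have finitely many terms modulo every $F^n$ and hence converge on $\widehat{W}$. I would also record that the scalar $\frac{-A+\gyaku{A}}{4\log(-A)}$ lies in $\Q[[A+1]]$ — both $-A+\gyaku{A}$ and $\log(-A)$ vanish to first order at $A=-1$, the zeros cancelling to leave constant term $\frac12$ — so that $x$ is a genuine element of $\widehat{\skein{S^1\times I}}$ and the division by $-A+\gyaku{A}$ is legitimate, $g(L)-g(R)=4\log(-A)\log(t)$ being visibly divisible by it. The one genuinely delicate point is the operator inversion $\arccosh\circ\cosh=\id$, which I would settle by checking that the two relevant formal power series are compositional inverses on the maximal-ideal neighbourhood in which $\cosh\Theta-1$ lives.
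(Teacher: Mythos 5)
Your proof is correct and follows essentially the same route as the paper's: both reduce to the annulus, realize left and right multiplication by $l$ on the arcs $t^k(r_i^0)$ as the shift operators $At+A^{-1}t^{-1}$ and $A^{-1}t+At^{-1}$, identify $(\arccosh(-\,\cdot\,/2))^2$ of these with $(\log(-A)\pm\log(t))^2$, and finish by a difference of squares after cancelling the scalar $\frac{-A+A^{-1}}{4\log(-A)}$. The only difference is packaging: where you use the ansatz $e^{\theta}=-At$ and operator functional calculus, the paper uses the Chebyshev polynomials $(T+1)_n$ and the identity $(T+1)_n(q+q^{-1})=(q+1)^n+(q^{-1}+1)^n$ with $q=-At$ (together with the stacking product on the annulus skein module), which is the same substitution in different notation.
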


For $n=0,1,\cdots$, we define the Chebyshev polynomial $T_n (X) \in \Z [X]$
by setting $T_0 (X) =2$ and $T_{n+1} (X) =XT_n(X)-T_{n-1} (X)$.
We denote
by $(T+1)_n(X) \defeq \sum_{i=0}^n \frac{n!}{i!(n-i)!}T_i(X)$. 
It is obvious that $(T+1)_n(q+\gyaku{q}) =(q+1)^n+(\gyaku{q}+1)^n$.
Since
\begin{equation*}
(T+1)_n(x) =(\sqrt{x+2})^n((\frac{\sqrt{x+2}-\sqrt{x-2}}{2})^n+(\frac{\sqrt{x+2}+\sqrt{x-2}}{2})^n),
\end{equation*}
we have the following proposition.

\begin{prop}
We have
\begin{align*}
(T+1)_{2n} (x) &\in (x+2)^n \Z [x], \\
(T+1)_{2n+1} (x)  &\in (x+2)^n \Z[x].
\end{align*}
\end{prop}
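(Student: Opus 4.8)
The plan is to eliminate the square roots in the displayed expression for $(T+1)_n(x)$ preceding the statement and reduce everything to the integrality and parity of the polynomials $T_n$. Writing $y = \sqrt{x+2}$, I would set
\[
A = \frac{\sqrt{x+2} - \sqrt{x-2}}{2}, \qquad B = \frac{\sqrt{x+2}+\sqrt{x-2}}{2},
\]
so that $A + B = y$ and $AB = \frac{(x+2)-(x-2)}{4} = 1$. Since $AB = 1$ we may write $B = \gyaku{A}$, and the defining recurrence of $T_n$ gives the standard identity $T_n(q + \gyaku{q}) = q^n + \gyaku{q}^n$ (this is exactly what is used just above to compute $(T+1)_n(q+\gyaku{q})$); applying it with $q = A$ yields $A^n + B^n = T_n(A + \gyaku{A}) = T_n(y)$. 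Hence the displayed expression collapses to the single clean identity
\[
(T+1)_n(x) = (\sqrt{x+2})^n\, T_n(\sqrt{x+2}) = y^n\, T_n(y).
\]

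Next I would record two elementary properties of $T_n$, both by induction from $T_0(X) = 2$, $T_1(X) = X$ and $T_{n+1}(X) = X T_n(X) - T_{n-1}(X)$: first, $T_n(X) \in \Z[X]$; second, $T_n(-X) = (-1)^n T_n(X)$, so $T_n$ is an even polynomial when $n$ is even and an odd polynomial when $n$ is odd. The inductive step for the parity statement is immediate, since $X T_n$ has the parity of $n+1$ while $T_{n-1}$ has the parity of $n-1 \equiv n+1 \pmod 2$.

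With these in hand the two cases follow directly. For the even index, $T_{2n}(y)$ is an even polynomial, hence a polynomial in $y^2 = x+2$ with integer coefficients, say $T_{2n}(y) = g(x+2)$ with $g \in \Z[x]$; then
\[
(T+1)_{2n}(x) = y^{2n}\, T_{2n}(y) = (x+2)^n\, g(x+2) \in (x+2)^n \Z[x].
\]
For the odd index, $T_{2n+1}(y)$ is an odd polynomial, hence of the form $y\, h(y^2) = \sqrt{x+2}\, h(x+2)$ with $h \in \Z[x]$; therefore
\[
(T+1)_{2n+1}(x) = y^{2n+1}\, T_{2n+1}(y) = y^{2n+2}\, h(x+2) = (x+2)^{n+1}\, h(x+2) \in (x+2)^{n+1}\Z[x] \subseteq (x+2)^n \Z[x].
\]
Note that the odd case in fact produces one more factor of $x+2$ than the statement demands.

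The only genuine point requiring care is the passage from the square-root identity to honest polynomial identities in $x$: one must check that after substituting $y = \sqrt{x+2}$ the even (resp. odd) part of $T_N$ really does become a polynomial in $x+2$ (resp. $\sqrt{x+2}$ times such a polynomial) with integer coefficients and no surviving radicals, which is precisely what the parity property guarantees. I expect this bookkeeping, together with the verification that $A^n + B^n$ is the genuine Chebyshev value $T_n(A + \gyaku{A})$, to be the main (though minor) obstacle; everything else is a routine induction. As a backup avoiding square roots entirely, I would keep the alternative of expanding $A^n + B^n$ by the binomial theorem, collecting the surviving even-index terms into $\frac{1}{2^{N-1}}\sum_j \binom{N}{2j}(x+2)^{N-j}(x-2)^j$, reading off the minimal power of $x+2$ (namely $n$ for $N = 2n$ and $n+1$ for $N = 2n+1$), and then upgrading this rational factorization to an integral one by dividing the integer polynomial $(T+1)_N(x)$ by the monic polynomial $(x+2)^n$ inside $\Z[x]$.
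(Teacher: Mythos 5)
Your proof is correct, and while it starts from the same displayed square-root identity that the paper uses, it finishes by a genuinely different mechanism. The paper's entire proof is the sentence preceding the statement: having noted $(T+1)_n(q+\gyaku{q})=(q+1)^n+(\gyaku{q}+1)^n$ and deduced the square-root identity, it leaves the reader to expand the two $n$-th powers binomially, so that odd powers of $\sqrt{x-2}$ cancel and
\begin{equation*}
(T+1)_n(x)=\frac{1}{2^{n-1}}\sum_{0\le 2j\le n}\binom{n}{2j}(x+2)^{n-j}(x-2)^{j},
\end{equation*}
whose lowest power of $x+2$ is $n-\lfloor n/2\rfloor$; one then still needs the small extra remark that $(x+2)^{k}\Q[x]\cap\Z[x]=(x+2)^{k}\Z[x]$ because $(x+2)^{k}$ is monic. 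That reading is exactly your ``backup'' argument. Your main route instead collapses the bracket first: since $AB=1$ and $A+B=\sqrt{x+2}$, the identity $T_n(q+\gyaku{q})=q^{n}+q^{-n}$ gives $A^{n}+B^{n}=T_n(\sqrt{x+2})$, hence $(T+1)_n(x)=(\sqrt{x+2})^{n}\,T_n(\sqrt{x+2})$, and the conclusion follows from the integrality and parity of the Chebyshev polynomials alone. This buys you three things: no binomial bookkeeping, integrality for free (no monic-division step, since $T_n\in\Z[X]$ throughout), and the sharper exponent $n+1$ in the odd case. Both routes share the caveat you correctly flag: the manipulations with $\sqrt{x\pm 2}$ must be read in a ring where they make sense (for instance substitute $x=y^{2}-2$ and work in $\Z[y][\sqrt{y^{2}-4}]$, or verify the identities as functions on $x\ge 2$ and use that polynomials agreeing at infinitely many points coincide); your parity observation is precisely what guarantees that no radical survives at the end.
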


We define a sequence $\shuugou{a_n}_{n \geq 2}$ by
$(\log (-x))^2 =\sum_{n=2}^\infty a_n (x+1)^n \in \Q [[x+1]]$.
We remark that $2(\arccosh   (-\frac{X}{2}))^2 =(\log (-T))^2 (X) \defeq
\sum_{n=2}^\infty a_n (T+1)_n (X) \in \Q [[X+2]]$.

\begin{proof}[Proof of Lemma \ref{lemm_Dehn_twist_toy}]
We have
\begin{align*}
&\sigma(\frac{-A+\gyaku{A}}{4\log(-A)}(\arccosh  (-\frac{l}{2}))^2)(r_i^0) \\
&=\sigma (\frac{-A+\gyaku{A}}{8 \log (-A)}(\log (-T))^2(l))(r_i^0) \\
&=\frac{1}{8 \log (-A)}((\log (-T))^2(l)(r_i^0)-(r_i^0)(\log (-T))^2(l)) \\
&=\frac{1}{8 \log (-A)}\sum_{k=1}^\infty (a_k (T+1)_k (l)r_i^0-a_k r_i^0(T+1)_k (l)) \\
&=\frac{1}{8 \log (-A)}\sum_{k=1}^\infty (a_k (-A r_i^1-r_i^0)^k
+a_k (-A^{-1} r_i^{-1}-r_i^0)^k-a_k (-A^{-1} r_i^1-r_i^0)^k-a_k (-A r_i^{-1}-r_i^0)^k) \\
&=\frac{1}{8 \log (-A)}(\log^2(-Ar_i^1) +\log^2(-\gyaku{A}\gyaku{r_i})-\log^2(-\gyaku{A}r_i^1)
-\log^2(-A \gyaku{r_i}) ) \\
&=\frac{1}{8 \log(-A)}(((2( \log (-A))^2r_i^0+2 \log(-A) \log r_i^1 + ( \log r_i^1)^2)) \\
&-2(( \log (-A))^2r_i^0-2 \log(-A) \log r_i^1 + ( \log r_i^1)^2)) \\
&=\log r_i^1=\log (t) (r_i^0).
\end{align*}
Here we denote by $r_i^1 \defeq t (r_i^0)$, by $\gyaku{r_i} \defeq \gyaku{t} (r_i^0)$,
$\log r_i^1 \defeq \sum_{i=1}^ \infty \frac{(-1)^{i-1}}{i} (r_i^1-r_i^0)$
and by $\log^2 ((-A)^{\epsilon_1} {r_i}^{\epsilon_2}) \defeq 
\sum_{n=2}^\infty a_n( (-A)^{\epsilon_1} {r_i}^{\epsilon_2}+r_i^0)^n$
for $\epsilon_1 \in \shuugou{-1,0,1}$
 and $\epsilon_2 \in \shuugou{-1,1}$.
This proves the lemma.
\end{proof}

\begin{proof}[Proof of Lemma \ref{lemm_Dehn_twist}(2)]
We have 
\begin{align*}
&\sigma(\frac{-A+\gyaku{A}}{4\log(-A)}(\arccosh  (-\frac{l}{2}))^2)(r_1^0 \boxtimes r_2^0 
\boxtimes \cdots \boxtimes r_m^0) \\
&=\sum_{i=1}^m r_{1}^0 \boxtimes \cdots
\boxtimes r_{i-1}^0 \boxtimes 
\sigma(\frac{-A+\gyaku{A}}{4\log(-A)}(\arccosh  (-\frac{l}{2}))^2)(r_i^0)
\boxtimes
r_{i+1}^0 \boxtimes \cdots \boxtimes r_{m}^0 \\
&=\sum_{i=1}^m r_{1}^0 \boxtimes \cdots
\boxtimes r_{i-1}^0 \boxtimes 
\log(t)(r_i^0)
\boxtimes
r_{i+1}^0 \boxtimes \cdots \boxtimes r_{m}^0 \\
& =\sum_{i=1}^m \log (x_i) \\
& =\log (x_1  x_2  \cdots  x_m) \\
&=\log(t) (\id).
\end{align*}
This proves the lemma.
\end{proof}

\begin{rem}
Let $\Sigma$ be a compact connected oriented surface with non-empty
connected boundary and let $\mathcal{I} (\Sigma) \subset  \mathcal{M} (\Sigma)$
be the Torelli group of $\Sigma$. Then we have

\begin{enumerate}
\item $\mathcal{I}(\Sigma) \subset  \check{\mathcal{M}} (\Sigma)$.
\item For any $\xi \in  \mathcal{I} (\Sigma) $,
there exists $x_{\xi} \in \widehat{\skein{\Sigma}}$ satisfying that
$x_{\xi}$ is a skein representative of $\xi \in \mathcal{I}(\Sigma) \subset
\check{\mathcal{M}}(\Sigma)
$ by $((\skein{\Sigma},\filtn{F^n\skein{\Sigma}}), \Theta (\Sigma))$
in the sense of Definition \ref{definition_quantization}.
\end{enumerate}

The proof will appear in \cite{TsujiCSAII}.
\end{rem}

\section{Filtrations}

\subsection{The filtrations depend only on the underlying $3$-manifold}

In this subsection, we prove the following theorem.
The proof of the theorem is analogous to that of
\cite{Ha2000} Proposition 6.10.

\begin{thm}
\label{thm_filtration_independent}
Let $\Sigma$ and $\Sigma'$ be two oriented compact connected surfaces,
 $J$ a finite subset of
$\partial \Sigma$ and $J'$ a finite subset of $\partial \Sigma'$
such that 
there exists a diffeomorphism $\xi : (\Sigma \times I ,J \times I)
 \to (\Sigma' \times I, J' \times I)$. 
Then we have $\xi (F^n \skein{\Sigma,J}) = F^n \skein{\Sigma' ,J'}$
for $n \geq 0$.
\end{thm}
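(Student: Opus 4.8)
The plan is to reduce the statement to an intrinsic, product-free description of the filtration and then to observe that such a description is manifestly invariant under diffeomorphisms of the underlying $3$-manifold. First I would record that the diffeomorphism $\xi:(\Sigma\times I, J\times I)\to(\Sigma'\times I, J'\times I)$ carries framed tangles with endpoints on $J\times I$ to framed tangles with endpoints on $J'\times I$, and sends Kauffman triples to Kauffman triples; hence it descends to a $\Q[A,\gyaku{A}]$-module isomorphism $\xi:\skein{\Sigma,J}\to\skein{\Sigma',J'}$. Because the augmentation $\epsilon$ sends every knot to $-2$ and $A$ to $-1$—data depending only on the number of components of a link and not on the surface—one has $\epsilon'\circ\xi=\epsilon$, and therefore $\xi(\ker\epsilon)=\ker\epsilon'$.

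The essential difficulty is that $F^n\skein{\Sigma,J}=(\ker\epsilon)^n\skein{\Sigma,J}$ is built from the associative product, which uses the $I$-coordinate, and $\xi$ need not be an algebra homomorphism (as the introduction already warns). Passing from $\xi(\ker\epsilon)=\ker\epsilon'$ to $\xi((\ker\epsilon)^n)=(\ker\epsilon')^n$ is therefore not formal. The way around this is the alternative description of the filtration furnished by Lemma~\ref{lemm_new_filtration}: I would show that $F^n\skein{\Sigma,J}$ is the $\Q$-span of the elements
\[
\Phi(L,T)\defeq(A+1)^{k}\sum_{L'\subseteq L}(-1)^{\zettaiti{L'}}(-2)^{-\zettaiti{L'}}[L'\cup T],
\]
where $T$ is a tangle with endpoints $J$, the link $L$ is a collection of closed components disjoint from $T$, and $k+\zettaiti{L}\geq n$. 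The inclusion $\supseteq$ comes from expanding the product of the $k$ factors $(A+1)$ and the $\zettaiti{L}$ factors $-\tfrac12[K_i]-[\emptyset]\in\ker\epsilon$ over the disjoint knots $K_i$, using that $[K_i][K_j]=[K_i\cup K_j]$ for disjoint stacked knots; the reverse inclusion, proved as in \cite{Ha2000} Proposition 6.10 with the help of Lemma~\ref{lemm_kouten_sa} and Lemma~\ref{lemm_skein_kernel}, shows that these alternating sums already exhaust $(\ker\epsilon)^n\skein{\Sigma,J}$.

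With this description in hand the theorem is immediate. The recipe producing $\Phi(L,T)$—choose a tangle, distinguish its closed components, and form the alternating sum over deletions of those components, weighted by a power of $(A+1)$—is a purely three-dimensional construction that refers only to $(\Sigma\times I, J\times I)$ and not to the product structure or the order of stacking. Consequently $\xi\bigl(\Phi(L,T)\bigr)=\Phi(\xi L,\xi T)$, which is again a generator of the same type, with the same number of distinguished components and the same power of $(A+1)$, hence lies in $F^n\skein{\Sigma',J'}$. Therefore $\xi(F^n\skein{\Sigma,J})\subseteq F^n\skein{\Sigma',J'}$, and applying the identical argument to the diffeomorphism $\gyaku{\xi}$ yields the opposite inclusion, giving the desired equality.

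I expect the single hard step to be the reverse inclusion in the description of $F^n$, namely proving that every element of $(\ker\epsilon)^n$ can be rewritten as a $\Q$-combination of the alternating sums $\Phi(L,T)$; this is exactly the content of Lemma~\ref{lemm_new_filtration}, and is where the analogy with Habiro's clasper filtration does the real work. Everything after that—the module isomorphism and the invariance of the generating set—is formal.
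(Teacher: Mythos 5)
Your overall route is the paper's own. Up to the unit scalar $2^{-\zettaiti{L}}$ (note that $(-1)^{\zettaiti{L'}}(-2)^{-\zettaiti{L'}}=2^{-\zettaiti{L'}}$), the span of your elements $\Phi(L,T)$ is exactly the auxiliary filtration $F^{\star n}\skein{\Sigma,J}$ that the paper introduces; your claimed identity $F^n\skein{\Sigma,J}=\mathrm{span}\,\Phi(L,T)$ is Lemma~\ref{lemm_new_filtration}; and your concluding step (the generating set is defined purely in terms of $(\Sigma\times I,J\times I)$, hence is permuted by $\xi$) is word-for-word the paper's proof of Theorem~\ref{thm_filtration_independent}.

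However, you have swapped which of the two inclusions is the hard one, and the argument you give for the genuinely hard inclusion does not work. The inclusion you call easy, namely $\Phi(L,T)\in(\ker\epsilon)^n\skein{\Sigma,J}$, you justify by expanding the product of the factors $-\tfrac{1}{2}[K_i]-1$ and using $[K_i][K_j]=[K_i\cup K_j]$; but that identity holds only when the closed components are split from one another and from $T$, i.e.\ when $L\cup T$ is isotopic to a stacked configuration $K_1\boxtimes\cdots\boxtimes K_m\boxtimes T$. For a general tangle the distinguished components are linked, $[L'\cup T]$ is not a product of anything in the skein module, and the expansion argument says nothing. This is precisely what the paper's Lemma~\ref{lemm_filtration_prepare} is for: a crossing change alters $\kukakko{(T,\bigcup_i K_i)}$ only by an element of $(A-\gyaku{A})F^{\star(n-1)}\skein{\Sigma,J}$, and since $A-\gyaku{A}=\gyaku{A}(A-1)(A+1)$ and, by induction on $n$, $F^{\star(n-1)}\skein{\Sigma,J}=F^{n-1}\skein{\Sigma,J}$, such errors lie in $F^n\skein{\Sigma,J}$; one may therefore unlink all distinguished components and reduce to the stacked case that your expansion does handle. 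Conversely, the inclusion you single out as hard --- that the $\Phi(L,T)$ exhaust $(\ker\epsilon)^n\skein{\Sigma,J}$ --- is the formal one: $\ker\epsilon$ is generated as an ideal by $A+1$ and the elements $[K]+2$ for knots $K$, and a generator $(A+1)^k([K_1]+2)\cdots([K_{n-k}]+2)[T]$ of $F^n\skein{\Sigma,J}$ is literally $2^{n-k}\,\Phi(L,T)$ for the stacked configuration. So your plan is correct in outline, but the Habiro-style crossing-change induction must be carried by the membership direction; as written, that step would fail for linked components.
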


To prove it, we need new filtrations of the Kauffman skein modules.

Let $\Q [A, \gyaku{A}]  \mathcal{T}
(\Sigma,J)$  be the free module with basis $\mathcal{T} (\Sigma,J)$ over
$\Q [A, \gyaku{A}]$ and  $\kukakko{ \cdot}$  the natural surjection
$\Q [A, \gyaku{A}]  \mathcal{T} (\Sigma,J) \to \skein{\Sigma,J}$.
For a tangle $ T \in  \mathcal{T} (\Sigma,J)$ and  closed components $L_1,L_2, 
\cdots, L_m$ of $T$, we denote by 
\begin{equation*} (T, \bigcup_{i=1}^m L_i)
\defeq  \sum_{j=0}^m \sum_{\shuugou{i_1,i_2, \cdots , i_j } \subset  \shuugou{1,2, \cdots, m}}
2^{m-j} T' \cup \bigcup_{h=1}^j L_{i_h} \in \Q[ A, \gyaku{A}] \mathcal{T} 
(\Sigma,J)
\end{equation*}
 where $T' \cup \bigcup_{i=1}^m L_i = T$.
We denote by $F^{\star 0} \skein{\Sigma,J}
\defeq \skein{\Sigma,J}$ and
by $F^{ \star n} \skein{\Sigma,J}$ the
$\Q [A, \gyaku{A}]$ submodule generated by 
$(A+1)F^{\star (n-1)}$ and the set of 
$\skein{\Sigma, J}$ consisting of the elements $\kukakko{(T, \bigcup_{i=1}^n K_i)}
\in \skein{\Sigma,J}$ for $T \in \mathcal{T} (\Sigma,J)$ and closed components
$K_1,K_2, \cdots, K_n$ of $T$ for $n \geq 1$.
Similarly, the filtration $\filtn{F^{\star n} \skein{\Sigma',J'}}$ is defined as 
$\filtn{F^{ \star n} \skein{\Sigma,J}}$.

\begin{lemm}
\label{lemm_filtration_prepare}
Let $E: D \sqcup  \coprod_{i=1}^n (S^1)_i \to \Sigma$ be an immersion
whose intersections  consist of 
transverse double points, and suppose that $E( \partial D) =J$.
Here we denote by $D= \coprod_{i'=1}^{\frac{\sharp J}{2}} I_{i'} \sqcup \coprod_{i''=1}^M
 (S^1)_{k+i''}$.
Fix an intersection $P$ of $E(D \sqcup \coprod_{i=1}^n (S^1)_i)$.
Let $d(1)$ and $d(2)$ be two tangle diagrams that equal $E(D \sqcup \coprod_{i=1}^m
(S^1)_i)$ with identical height-information except the neighborhood of $P$, where they 
looks  as in Figure \ref{fig_K2} and Figure \ref{fig_K1}, respectively.
We denote  by $T(1)$ and $T(2)$ the two tangles presented by $d(1)$ and $d(2)$,
 respectively.
For $i=1, \cdots, m$, let $K(1)_i$ be the component of $T(1)$ presented by 
$E((S^1)_i)$ and $K(2)_i$ the component of $T(2)$ presented by 
$E((S^1)_i)$.
Then we have $\kukakko{ (T(1), \bigcup_{i=1}^n K(1)_i)} - \kukakko{(T(2), \bigcup_{i=1}^n
K(2)_i)} \in (A+1) F^{\star (n-1)} \skein{\Sigma,J}$.

\end{lemm}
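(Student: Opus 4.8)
The plan is to reduce the crossing change at $P$ to the local skein relation of Lemma \ref{lemm_kouten_sa} and then reorganize the resulting sum so that it becomes visibly a marked generator of $F^{\star(n-1)}\skein{\Sigma,J}$. Writing $T''=E(D)$ for the unmarked skeleton and $T_S:=T''\cup\bigcup_{i\in S}K_i$ for $S\subseteq\{1,\dots,n\}$, I would first expand
\[
\kukakko{(T(1),\textstyle\bigcup_i K(1)_i)}-\kukakko{(T(2),\textstyle\bigcup_i K(2)_i)}
=\sum_{S\subseteq\{1,\dots,n\}}2^{\,n-\zettaiti{S}}\bigl(\kukakko{(T_S)(1)}-\kukakko{(T_S)(2)}\bigr),
\]
where $(T_S)(1),(T_S)(2)$ carry the crossing $P$ as in Figures \ref{fig_K2} and \ref{fig_K1}. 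Let $B\subseteq\{1,\dots,n\}$ be the set of marked indices whose component passes through $P$, so that $\zettaiti{B}\in\{0,1,2\}$. A term survives only when $P$ is actually present, i.e.\ when $B\subseteq S$; otherwise $(T_S)(1)=(T_S)(2)$ and the contribution cancels. For surviving terms Lemma \ref{lemm_kouten_sa} gives $\kukakko{(T_S)(1)}-\kukakko{(T_S)(2)}=(A-\gyaku{A})(\kukakko{(T_S)_\infty}-\kukakko{(T_S)_0})$, and since $A-\gyaku{A}=(A+1)(1-\gyaku{A})$ with $(1-\gyaku{A})\in\Q[A,\gyaku{A}]$, the whole difference equals $(A+1)(1-\gyaku{A})(Z_\infty-Z_0)$, where $Z_\bullet:=\sum_{S\supseteq B}2^{\,n-\zettaiti{S}}\kukakko{(T_S)_\bullet}$. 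As $F^{\star(n-1)}\skein{\Sigma,J}$ is a $\Q[A,\gyaku{A}]$-submodule, it then suffices to prove $Z_\infty-Z_0\in F^{\star(n-1)}\skein{\Sigma,J}$.

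Next I would reindex by $S=B\sqcup S'$ with $S'\subseteq\{1,\dots,n\}\setminus B$ and factor the local smoothing out of the sum. Writing $G_\bullet$ for the result of smoothing $T''\cup\bigcup_{i\in B}K_i$ at $P$ (the components $K_i$ with $i\notin B$ being disjoint from the neighborhood of $P$, hence untouched), one has $(T_S)_\bullet=G_\bullet\cup\bigcup_{i\in S'}K_i$ and therefore
\[
Z_\bullet=\sum_{S'\subseteq\{1,\dots,n\}\setminus B}2^{\,(n-\zettaiti{B})-\zettaiti{S'}}\kukakko{G_\bullet\cup\textstyle\bigcup_{i\in S'}K_i}
=\kukakko{\bigl(G_\bullet\cup\textstyle\bigcup_{i\notin B}K_i,\ \bigcup_{i\notin B}K_i\bigr)} .
\]
Thus $Z_\bullet$ is precisely an $(n-\zettaiti{B})$-fold marked generator, with marked components the untouched $K_i$ ($i\notin B$). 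When $\zettaiti{B}\le 1$ this already lies in $F^{\star(n-\zettaiti{B})}\skein{\Sigma,J}\subseteq F^{\star(n-1)}\skein{\Sigma,J}$, so $Z_\infty-Z_0$ is settled in these cases.

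The main obstacle is the case $\zettaiti{B}=2$, where $P$ joins two \emph{distinct} marked components $K_1,K_2$ and the naive generator above is only $(n-2)$-fold marked. Here I would invoke the topological fact that a crossing between two distinct components has \emph{both} smoothings fusing $K_1$ and $K_2$ into a single closed component $M_\bullet$, so that $G_\bullet=T''\cup M_\bullet$ acquires a new closed component available for marking. Applying the elementary marking identity $(R,\,M\cup\bigcup_i K_i)=(R,\bigcup_i K_i)+2\,(R\setminus M,\bigcup_i K_i)$ with $M=M_\bullet$ gives
\[
\kukakko{\bigl(G_\bullet\cup\textstyle\bigcup_{i\notin B}K_i,\ M_\bullet\cup\bigcup_{i\notin B}K_i\bigr)}=Z_\bullet+2W,
\]
where $W=\kukakko{(T''\cup\bigcup_{i\notin B}K_i,\ \bigcup_{i\notin B}K_i)}$ does not depend on the smoothing $\bullet$. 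The left-hand side is an $(n-1)$-fold marked generator, hence lies in $F^{\star(n-1)}\skein{\Sigma,J}$; subtracting the two smoothings cancels the correction $2W$ and yields $Z_\infty-Z_0=\kukakko{(G_\infty\cup\dots,M_\infty\cup\dots)}-\kukakko{(G_0\cup\dots,M_0\cup\dots)}\in F^{\star(n-1)}\skein{\Sigma,J}$. Combining the three cases completes the proof. The two points requiring care are checking that the $K_i$ with $i\notin B$ really remain closed components after smoothing (immediate, as they avoid the neighborhood of $P$) and that $M_\bullet$ is a single closed loop, which is exactly where the distinctness of $K_1,K_2$ in the case $\zettaiti{B}=2$ is used.
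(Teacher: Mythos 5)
Your proof is correct and takes essentially the same route as the paper's: the paper likewise splits into the cases where $P$ joins two distinct marked components, lies on a single marked component, or lies on the skeleton (your $\zettaiti{B}=2,1,0$), cancels the sublink terms in which the crossing is absent, applies Lemma \ref{lemm_kouten_sa} to the rest, and in the two-component case marks the fused component of the two smoothings to produce $(n-1)$-fold marked generators of $F^{\star(n-1)}\skein{\Sigma,J}$. The only difference is presentational: you expand the marked generators into weighted sublink sums and track the correction term $2W$ explicitly, while the paper states the resulting identities directly in each case.
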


\begin{proof}
We denote by
\begin{align*}
T'(1) \defeq T(1) \backslash (\bigcup_{i \in \shuugou{1, 
\cdots, n}} K(1)_i ), \\
T'(2) \defeq T(2) \backslash (\bigcup_{i \in \shuugou{1, 
\cdots, n}} K(2)_i ). \\
\end{align*}
Let $T(0)$ and $T( \infty)$ be two elements of  $\mathcal{T} (\Sigma,J)$ presented by 
 tangle diagrams $d(0)$ and $d( \infty)$ which equal $d(1)$ except $D$, 
where they are shown in Figure \ref{fig_K0} and Figure \ref{fig_Kinfi}, respectively.

There are three cases.
\begin{enumerate}
\item  $P$ is a crossing of $K(1)_k$ and $K(1)_l$ for some 
$1 \leq k < l \leq m$.
\item $P$ is one of the crossings of $K(1)_k$ and the crossings of 
$K(1)_k$ and $T'(1)$ for some $1 \leq k \leq n$.
\item $P$ is a crossing of $T'(1)$
\end{enumerate}

(1)We assume $P$ is a crossing of $K(1)_k$ and $K(1)_l$ for some 
$1 \leq k < l \leq m$.
Let $K(0)_{kl}$ be the component of $T(0)$ satisfying 
$T(0) \backslash K(0)_{kl} =T(1) \backslash (K(1)_k \cup K(1)_l)$
and $K(\infty)_{kl}$ be the component of $T( \infty)$ satisfying
$T( \infty) \backslash K( \infty)_{kl} = T( 1) \backslash
(K(1)_k \cup K(1)_l)$.
For $i \neq k,l$, let $K(0)_i$ be the component of $T(0)$ presented by 
$E((S^1)_i)$ and $K(\infty)_i$ the component of $T(\infty)$ presented by 
$E((S^1)_i)$.
We denote by 
\begin{align*}
T'(0) \defeq T(0) \backslash (\bigcup_{i \in \shuugou{1, 
\cdots, n} \backslash \shuugou{k,l}} K(0)_i \cup K(0)_{kl}) ,\\
T'( \infty) \defeq T( \infty) \backslash (\bigcup_{i \in \shuugou{1, 
 \cdots, n} \backslash \shuugou{k,l}} K(\infty)_i \cup K(\infty)_{kl}).
\end{align*}
For a subset $\shuugou{i_1, \cdots, i_j} \subset \shuugou{1, \cdots, n}$
not including $k$ or $l$, we have
\begin{equation*}
\kukakko{T'(1) \cup \bigcup_{h \in \shuugou{i_1, \cdots, i_j}} K(1)_{i_h}}
-\kukakko{T'(2) \cup \bigcup_{h \in \shuugou{i_1, \cdots,i_j}}K(2)_{i_h}}=0.
\end{equation*}
Using Lemma \ref{lemm_kouten_sa}, for a subset 
$\shuugou{i_1, \cdots,i_j} \subset \shuugou{1, \cdots, n}$ including
$k$ and $l$, we have
\begin{align*}
\kukakko{T'(1) \cup \bigcup_{h \in \shuugou{i_1, \cdots, i_j}} K(1)_{i_h}}
-\kukakko{T'(2) \cup \bigcup_{h \in \shuugou{i_1, \cdots,i_j}}K(2)_{i_h}} \\
=(A-\gyaku{A}) (\kukakko{T'(0) \cup \bigcup_{h \in 
\shuugou{i_1, \cdots, i_j}\backslash \shuugou{k,l}} K(0)_{i_h}
\cup K(0)_{kl}} \\
-\kukakko{T'(0) \cup \bigcup_{h \in 
\shuugou{i_1, \cdots, i_j}\backslash \shuugou{k,l}} K(0)_{i_h}
\cup K(0)_{kl}}).
\end{align*}
Hence we have
\begin{align*}
\kukakko{(T(1), \bigcup_{i \in \shuugou{1, \cdots, n}} K(1)_i)}
-\kukakko{(T(2), \bigcup_{i \in \shuugou{1, \cdots, n}} K(2)_i)} \\
=(A-\gyaku{A})(
\kukakko{(T(0), (\bigcup_{i \in \shuugou{1, \cdots, n} \backslash 
\shuugou{k,l}} K(0)_i )\cup K(0)_{kl})} \\
-\kukakko{(T(\infty), (\bigcup_{i \in \shuugou{1, \cdots, n} \backslash
\shuugou{k,l}} K(\infty)_i) \cup K(\infty)_{kl})}.
\end{align*}

(2)We assume $P$ is one of the crossings of $K(1)_k$ and the crossings of 
$K(1)_k$ and $T'(1)$ for some $1 \leq k \leq n$. For $i \neq k$,
let $K(0)_i$ be the component of $T(0)$ presented by 
$E((S^1)_i)$ and $K(\infty)_i$ the component of $T(\infty)$ presented by 
$E((S^1)_i)$.
Using Lemma \ref{lemm_kouten_sa}, we have 
\begin{align*}
&\kukakko{(T(1), \bigcup_{i \in \shuugou{1, \cdots, n}} K(1)_i)}
-\kukakko{(T(2), \bigcup_{i \in \shuugou{1, \cdots, n}} K(2)_i)} \\
&=\kukakko{(T(1) \cup K(1)_k, \bigcup_{i \in \shuugou{1, \cdots, n}
\backslash \shuugou{k}} K(1)_i)}
-\kukakko{(T(2) \cup K(2)_k, \bigcup_{i \in \shuugou{1, \cdots, n}
\backslash \shuugou{k}} K(2)_i)} \\
&=(A-\gyaku{A})(
\kukakko{(T(0), \bigcup_{i \in \shuugou{1, \cdots, n} \backslash 
\shuugou{k}} K(0)_i)}
-\kukakko{(T(\infty), \bigcup_{i \in \shuugou{1, \cdots, n} \backslash
\shuugou{k}} K(\infty)_i )}.
\end{align*}

(3)We assume that $P$ is a crossing of $T'(1)$.
For $i=1, \cdots, n$, let $K(0)_i$ be the component of $T(0)$ presented by 
$E((S^1)_i)$ and $K(\infty)_i$ the component of $T(\infty)$ presented by 
$E((S^1)_i)$.
Using Lemma \ref{lemm_kouten_sa}, we have 
\begin{align*}
&\kukakko{(T(1), \bigcup_{i \in \shuugou{1, \cdots, n}} K(1)_i)}
-\kukakko{(T(2), \bigcup_{i \in \shuugou{1, \cdots, n}} K(2)_i)} \\
&=(A-\gyaku{A})(
\kukakko{(T(0), \bigcup_{i \in \shuugou{1, \cdots, n}} K(0)_i)}
-\kukakko{(T(\infty), \bigcup_{i \in \shuugou{1, \cdots, n} } K(\infty)_i )}.
\end{align*}

We conclude that
$\kukakko{ (T(1), \bigcup_{i=1}^n K(1)_i)} - \kukakko{(T(2), \bigcup_{i=1}^n
K(2)_i)} \in (A+1) F^{\star (n-1)} \skein{\Sigma,J}$.
This proves the lemma.

\end{proof}

\begin{lemm}
\label{lemm_new_filtration}
Let $\Sigma$ be a compact connected oriented surface,
and $J$ a finite subset of $\partial \Sigma$.
We have $F^n \skein{\Sigma,J} = F^{\star n} \skein{\Sigma,J}$
for any non-negative integer $n$.
Furthermore, We have
\begin{equation*}
\sum_{L' \subset L} (-1)^{\zettaiti{L'}}(-2)^{-\zettaiti{L'}}[L'] \in (\ker \epsilon)^n
\end{equation*}
 for a link $L$ in $\Sigma \times I$ 
having components more than $n$,
where the sum is over all sublinks $L' \subset L$
including the empty link and we denote by $\zettaiti{L}$
the number of components of $L$.
In other words, for a link $L$ in $\Sigma \times I$,
$(-1)^{\zettaiti{L'}}(-2)^{-\zettaiti{L'}}[L'] \mod (\ker \epsilon)^n$
is a finite type invariant of order $n$ in the sense of Le
\cite{Le1996} (3.2).
\end{lemm}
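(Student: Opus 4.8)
The plan is to establish the equality of the two filtrations $F^n\skein{\Sigma,J} = F^{\star n}\skein{\Sigma,J}$ by proving two inclusions, and then to deduce the finite-type-invariant statement as an immediate corollary of the inclusion $F^{\star n}\skein{\Sigma,J}\subset F^n\skein{\Sigma,J}$. For the inclusion $F^{\star n}\skein{\Sigma,J}\subset F^n\skein{\Sigma,J}$, I would argue that each generator $\kukakko{(T,\bigcup_{i=1}^n K_i)}$ lies in $(\ker\epsilon)^n$. The key observation is that, up to the ambient-product structure $\boxtimes$, the expression $(T,\bigcup_{i=1}^n K_i)$ is (a version of) the product over the closed components $K_i$ of the elementary factors $(K_i + 2\,\mathcal{O}\text{-type terms})$, each of which represents the difference $[K_i]-(-2)[\,\cdot\,]$ that maps to $0$ under $\epsilon$; that is, each factor lies in $\ker\epsilon$. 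Since $F^n\skein{\Sigma}=(\ker\epsilon)^n$ and these factors multiply into $\skein{\Sigma,J}$, their product lands in $F^n\skein{\Sigma,J}$. The $(A+1)$-terms in the definition of $F^{\star n}$ are handled by noting $A+1\in\ker\epsilon$ (since $\epsilon(A)=-1$), so multiplication by $(A+1)$ raises filtration degree by one, matching the recursive clause $(A+1)F^{\star(n-1)}\subset F^{\star n}$.

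For the reverse inclusion $F^n\skein{\Sigma,J}\subset F^{\star n}\skein{\Sigma,J}$, I would proceed by induction on $n$, the case $n=0$ being the definition $F^{\star 0}=\skein{\Sigma,J}$. The inductive step requires showing that $(\ker\epsilon)^n\skein{\Sigma,J}\subset F^{\star n}\skein{\Sigma,J}$, and the natural generators of $\ker\epsilon$ are elements of the form $[K]-(-2)$ for a knot $K$, or $[L]-(-2)^{\zettaiti{L}}$ for a link $L$. The combinatorial identity to exploit is that the defining element $(T,\bigcup_{i=1}^n K_i)$ is precisely the alternating/binomial combination that expands a product of $n$ augmentation-kernel generators; concretely, one checks that modulo $F^{\star(n+1)}$ the bracket $\kukakko{(T,\bigcup K_i)}$ agrees with the product $\prod_i\bigl([K_i]+2\bigr)$ acting on the remaining tangle $T'$, so that the top-degree part of any element of $(\ker\epsilon)^n$ is captured by an $F^{\star n}$-generator. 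Lemma \ref{lemm_filtration_prepare} is the engine here: it shows that a crossing change between two diagrams (the $d(1)\leftrightarrow d(2)$ move) alters the generator only by an element of $(A+1)F^{\star(n-1)}\subset F^{\star n}$, so the value of $\kukakko{(T,\bigcup K_i)}$ in the associated graded is independent of the height-information on the chosen closed components. This invariance lets me freely resolve all crossings and identify $(\ker\epsilon)^n$-generators with $F^{\star n}$-generators.

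Once both inclusions give $F^n\skein{\Sigma,J}=F^{\star n}\skein{\Sigma,J}$, the finite-type statement follows by specializing to $J=\emptyset$ and to a link $L$ with more than $n$ components. Taking $T=L$ and choosing $n+1$ of its components (or rewriting the all-subsets sum), the element $\sum_{L'\subset L}(-1)^{\zettaiti{L'}}(-2)^{-\zettaiti{L'}}[L']$ is, up to the normalizing power of $(-2)$, exactly the bracket $\kukakko{(L,\bigcup_{i} K_i)}$ defined above — the binomial $2^{m-j}$ weights in the definition of $(T,\bigcup L_i)$ are the expansion of $\prod_i\bigl(1-(-2)^{-1}[\,\cdot\,]\bigr)$-type factors — so it lies in $F^{\star n}\skein{\Sigma}=F^n\skein{\Sigma}=(\ker\epsilon)^n$. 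The order-$n$ finite-type conclusion in the sense of Le then reads off directly.

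The hard part will be pinning down the exact sign/weight bookkeeping so that the combinatorial expression $(T,\bigcup_{i=1}^m L_i)=\sum_j\sum_{\{i_1,\dots,i_j\}}2^{m-j}\,T'\cup\bigcup K_{i_h}$ is correctly identified, in the associated graded, with a product of $n$ augmentation-kernel generators; in particular one must verify that the $2^{m-j}$ coefficients are precisely those produced by $\epsilon([L'])=(-2)^{\zettaiti{L'}}$ so that all lower-order terms cancel and exactly the degree-$n$ part survives. Getting the base of the induction to interface cleanly with Lemma \ref{lemm_filtration_prepare} — i.e. confirming that crossing changes never lower filtration degree and that the $(A+1)$-contributions assemble correctly — is where the care is needed, but it is routine once the dictionary between the two sets of generators is fixed.
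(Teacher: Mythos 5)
Your proposal assembles the right ingredients (the two inclusions, the product identity, Lemma \ref{lemm_filtration_prepare}, induction on $n$), but it attaches the machinery to the wrong inclusion, and this leaves a genuine gap in the direction $F^{\star n} \skein{\Sigma,J} \subset F^{n} \skein{\Sigma,J}$ --- exactly the direction on which your finite-type corollary rests. Your argument there is that $\kukakko{(T,\bigcup_{i=1}^n K_i)}$ is ``up to the ambient-product structure $\boxtimes$'' a product of factors $[K_i]+2 \in \ker\epsilon$. That identification is literally true only when $T$ is a split union: $\kukakko{(K_1 \boxtimes \cdots \boxtimes K_n \boxtimes T', \bigcup_i K_i)} = ([K_1]+2)\cdots([K_n]+2)[T']$. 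For a general $T$ the chosen closed components are linked with each other and with $T' = T \setminus \bigcup_i K_i$, so one must first unlink them by crossing changes; by Lemma \ref{lemm_filtration_prepare} each such change alters the bracket by an element of $(A+1)F^{\star(n-1)}\skein{\Sigma,J}$, and such an error is known to lie in $F^{n}\skein{\Sigma,J}$ only if one already knows $F^{\star(n-1)}\skein{\Sigma,J} = F^{n-1}\skein{\Sigma,J}$. So this inclusion needs both Lemma \ref{lemm_filtration_prepare} and the inductive hypothesis in the form of the \emph{equality} at level $n-1$; your proposal invokes neither at this point, and the induction you do set up is only for the opposite inclusion $F^n \subset F^{\star n}$, whose hypothesis ($F^{n-1} \subset F^{\star(n-1)}$) points the wrong way for absorbing these errors. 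This is precisely how the paper's proof runs: induct on the equality, and use Lemma \ref{lemm_filtration_prepare} together with $(A-\gyaku{A})F^{\star(n-1)} = (A-\gyaku{A})F^{n-1} \subset F^n$ to reduce a general generator to the split one.

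Conversely, the direction where you deploy the induction and the crossing-change lemma, $F^n \subset F^{\star n}$, does not need them: since $\skein{\Sigma}$ is generated as an algebra by knots, $\ker\epsilon$ is generated as an ideal by $A+1$ and the elements $[K]+2$ with $K$ a knot, so $F^n\skein{\Sigma,J}$ is spanned by elements $(A+1)^a([K_1]+2)\cdots([K_b]+2)[T]$ with $a+b=n$; the displayed identity realizes $([K_1]+2)\cdots([K_b]+2)[T]$ as the $F^{\star b}$-generator $\kukakko{(K_1\boxtimes\cdots\boxtimes K_b\boxtimes T, K_1\boxtimes\cdots\boxtimes K_b)}$, and the clause $(A+1)F^{\star(k-1)}\subset F^{\star k}$ finishes. (Your claim that the bracket agrees with the product ``modulo $F^{\star(n+1)}$'' is also an overstatement --- Lemma \ref{lemm_filtration_prepare} only gives agreement modulo $(A+1)F^{\star(n-1)}\subset F^{\star n}$ --- though this is harmless in that direction.) Your deduction of the finite-type statement, taking all $m>n$ components of $L$ so that the all-sublinks sum equals $2^{-m}\kukakko{(L,\bigcup_{i=1}^m K_i)}$, is correct once the inclusion $F^{\star n}\subset F^n$ has actually been established.
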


\begin{proof}
We prove the lemma by induction on $n$.
If $n=0$, we have $F^{\star 0}\skein{\Sigma,J} = F^0 \skein{\Sigma,J}
=\skein{\Sigma,J}$. We assume that
$n > 0 $ and $F^{n-1} \skein{\Sigma,J} = F^{\star (n-1)} \skein{\Sigma,J}$.
For any tangle $T \in \mathcal{T}(\Sigma,J)$ and knots $K_1, K_2,
 \cdots, K_n \in \mathcal{T} (\Sigma)$, we have
\begin{equation*}
(\kukakko{K_1} +2) (\kukakko{K_2}+2) \cdots (\kukakko{K_n} +2) \kukakko{T}
=\kukakko{(K_1 \boxtimes K_2 \boxtimes \cdots \boxtimes K_n \boxtimes
T, K_1 \boxtimes K_2 \boxtimes \cdots \boxtimes K_n)}.
\end{equation*}
Hence we have $F^{\star n}\skein{\Sigma,J} \supset F^n \skein{\Sigma,J}$.
Using Lemma \ref{lemm_filtration_prepare}, for any tangle $T$ and closed components
$K_1, K_2, \cdots ,K_n$ of $T$, we have
\begin{align*}
\kukakko{(K_1 \boxtimes K_2 \boxtimes \cdots \boxtimes K_n \boxtimes
T', K_1 \boxtimes K_2 \boxtimes \cdots \boxtimes K_n)}
-\kukakko{(T, \bigcup_{i=1}^n K_i)}  \\
\in (A-\gyaku{A}) F^{\star (n-1)} \skein{\Sigma,J}
=(A-\gyaku{A}) F^{n-1} \skein{\Sigma,J} \subset F^n \skein{\Sigma,J}.
\end{align*}
Here we denote by $T' \defeq T\backslash (\bigcup_{i=1}^n K_i)$.
Since $\kukakko{(K_1 \boxtimes K_2 \boxtimes \cdots \boxtimes K_n \boxtimes
T', K_1 \boxtimes K_2 \boxtimes \cdots \boxtimes K_n)}
=(\kukakko{K_1} +2) (\kukakko{K_2}+2) \cdots (\kukakko{K_n} +2) \kukakko{T'}
\in F^n \skein{\Sigma,J}$, we have  $\kukakko{(T, \bigcup_{i=1}^n K_i)} \in
F^n \skein{\Sigma,J}$. This proves the lemma.
\end{proof}

\begin{proof}[Proof of Theorem \ref{thm_filtration_independent}]
By the definition, we have $\xi (F^{\star n} \skein{\Sigma,J})
=F^{\star n} \skein{\Sigma', J'}$.
Using Lemma \ref{lemm_new_filtration}, we have
$\xi(F^{n} \skein{\Sigma,J}) =\xi (F^{\star n} \skein{\Sigma,J})
=F^{\star n} \skein{\Sigma', J'} =F^n \skein{\Sigma',J'}$.
This proves the theorem.
\end{proof}

In this paper, we define the Kauffman bracket $\mathcal{K} : \shuugou{
\textrm{unoriented framed links } S^3} \to \Q [A ,\gyaku{A}]$ by
$\kukakko{L} =\mathcal{K} (L)  \kukakko{\emptyset} \in \skein{I \times I}$
for $L \in \shuugou{
\mathrm{links \ \ in \ \ } S^3} =\mathcal{T} (I \times I)$.
For an unoriented framed link $L$ in $S^3$ and components
$K_1, K_2, \cdots, K_m$, we define 
$\mathcal{K}(L, \bigcup_{i=1}^m K_i) \defeq \sum_{j=0}^m
\sum_{\shuugou{i_1,i_2, \cdots, i_j} \subset \shuugou{1,2, \cdots, m}}
2^{m-j} \mathcal{K} (L' \cup \bigcup_{h=1}^j K_{i_h})$. Here 
we denote by $L' \defeq L \backslash \bigcup_{i=1}^m K_i$.

Using Lemma \ref{lemm_new_filtration}, we have the following corollary

\begin{cor}
\label{cor_Kauffman}
For an unoriented framed link $L$ in $S^3$ and some components
$K_1, K_2, \cdots, K_m$, we have 
$\mathcal{K}(L, \bigcup_{i=1}^m K_i) \in
(A+1)^m \Q [A , \gyaku{A}]$.
\end{cor}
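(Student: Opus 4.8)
The plan is to recognize $\mathcal{K}(L, \bigcup_{i=1}^m K_i)$ as nothing but the image, under the canonical isomorphism $\skein{I \times I} \cong \Q[A,\gyaku{A}]$, of an element that lies in $F^{\star m}\skein{I \times I}$ by construction, and then to invoke Lemma \ref{lemm_new_filtration} to transport this into $(\ker\epsilon)^m$.

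First I would pin down the ambient module. Since the disk $I \times I$ carries no essential simple closed curve, Theorem \ref{thm_przy} gives $\mathcal{T}_0(I\times I) = \shuugou{\emptyset}$, so that $\skein{I\times I}=\Q[A,\gyaku{A}]\kukakko{\emptyset}$ is free of rank one on the empty link, and under the resulting identification $\kukakko{L}\mapsto \mathcal{K}(L)$ by the very definition of the Kauffman bracket. Next I would compute the augmentation ideal in this setting: $\epsilon$ sends $A\mapsto -1$ and $\kukakko{\emptyset}\mapsto (-2)^0=1$, so under the identification $\epsilon$ becomes evaluation at $A=-1$; hence $\ker\epsilon=(A+1)\Q[A,\gyaku{A}]$ and therefore $(\ker\epsilon)^m=(A+1)^m\Q[A,\gyaku{A}]$.

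The heart of the matter is to match the two definitions. Comparing the defining formula for $\mathcal{K}(L,\bigcup_{i=1}^m K_i)$ with that of $(T,\bigcup_{i=1}^m L_i)$ and applying $\kukakko{\cdot}$ term by term together with the $\Q[A,\gyaku{A}]$-linearity of $\mathcal{K}$, I obtain $\kukakko{(L,\bigcup_{i=1}^m K_i)}=\mathcal{K}(L,\bigcup_{i=1}^m K_i)\,\kukakko{\emptyset}$. Since all the $K_i$ are closed components of $L$, the left-hand side lies in $F^{\star m}\skein{I\times I}$ straight from the definition of that filtration. Finally, Lemma \ref{lemm_new_filtration} yields $F^{\star m}\skein{I\times I}=F^m\skein{I\times I}=(\ker\epsilon)^m$, so combining with the computation of $\ker\epsilon$ I conclude $\mathcal{K}(L,\bigcup_{i=1}^m K_i)\in(A+1)^m\Q[A,\gyaku{A}]$.

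I do not anticipate a serious obstacle once Lemma \ref{lemm_new_filtration} is in hand; the corollary is essentially a restatement of that lemma in the language of the Kauffman bracket of links in $S^3$. The only points deserving a little care are verifying that a Laurent polynomial vanishing at $A=-1$ is divisible by $A+1$ with quotient again in $\Q[A,\gyaku{A}]$, so that $\ker\epsilon$ really is the principal ideal generated by $A+1$, and checking that the combinatorial bookkeeping in the two sums over sublinks of the distinguished components $K_1,\dots,K_m$ agrees exactly.
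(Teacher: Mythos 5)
Your proposal is correct and follows essentially the same route as the paper's own proof: identify $\skein{I\times I}$ as the rank-one free module $\Q[A,\gyaku{A}]\kukakko{\emptyset}$, observe $\ker\epsilon=(A+1)\Q[A,\gyaku{A}]\kukakko{\emptyset}$ so that $F^m\skein{I\times I}=(A+1)^m\Q[A,\gyaku{A}]\kukakko{\emptyset}$, equate this with $F^{\star m}\skein{I\times I}$ via Lemma \ref{lemm_new_filtration}, and use $\kukakko{(L,\bigcup_{i=1}^m K_i)}=\mathcal{K}(L,\bigcup_{i=1}^m K_i)\kukakko{\emptyset}$ together with the fact that the left-hand side lies in $F^{\star m}$ by definition. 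The only difference is that you make explicit some routine verifications (the computation of $\ker\epsilon$ as a principal ideal, the term-by-term matching of the two sublink sums) that the paper leaves implicit.
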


\begin{proof}
Since $\ker \epsilon = (A+1)^m \Q [A,\gyaku{A}]\kukakko{\emptyset}$, 
we have $F^m \skein{I \times I} =(A+1)^m \Q [A,\gyaku{A}] \kukakko{\emptyset}$.
It is clear that $(A+1)^m \Q [A, \gyaku{A}] \kukakko{\emptyset}
=F^m \skein{I\times I} = F^{\star m} \skein{ I\times I}$.
We have $\kukakko{(L, \bigcup_{i=1}^m K_i)} =
\mathcal{K} (L, \bigcup_{i=1}^m K_i) \kukakko{\emptyset}$.
This proves the corollary.
\end{proof}

\subsection{Filtrations are Hausdorff}

In this subsection, we prove the following theorem.

\begin{thm}
\label{thm_Hausdorff}
Let $\Sigma$ be a compact connected oriented surface
with non-empty boundary and $J$
a finite subset of $\partial \Sigma$. We have
$\cap_{i=1}^\infty F^n \skein{\Sigma,J}=0$,
in other words, the natural homomorphism $\skein{\Sigma,J}
\to \widehat{\skein{\Sigma,J}}$ is injective.
\end{thm}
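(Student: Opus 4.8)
The plan is to combine the diffeomorphism invariance of the filtration (Theorem \ref{thm_filtration_independent}) with Lickorish's theorem (Theorem \ref{thm_Lickorish}) exactly as announced in the introduction. Since $\partial \Sigma \neq \emptyset$, the $3$-manifold $\Sigma \times I$ is a handlebody, so by Theorem \ref{thm_filtration_independent} I may fix once and for all a standard embedding of $\Sigma \times I$ into $S^3$ with complementary handlebody $H^c$, and work with this model without changing the filtration $\filtn{F^n \skein{\Sigma,J}}$. The strategy is then to produce, for each choice of a ``dual'' element $y$ living in $H^c$ (with endpoints matching $J$ so that the arcs close up), a $\Q[A,\gyaku{A}]$-linear detection map $\phi_y : \skein{\Sigma,J} \to \Q[A,\gyaku{A}]$ sending $[T]$ to the Kauffman bracket $\mathcal{K}(T \cup y)$ of the resulting link in $S^3$, and to show simultaneously that each $\phi_y$ annihilates $\bigcap_n F^n \skein{\Sigma,J}$ and that the family $\shuugou{\phi_y}_y$ is jointly injective.

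First I would verify the filtration estimate $\phi_y(F^n \skein{\Sigma,J}) \subset (A+1)^n \Q[A,\gyaku{A}]$. By Lemma \ref{lemm_new_filtration} the module $F^n \skein{\Sigma,J} = F^{\star n}\skein{\Sigma,J}$ is generated by $(A+1)F^{\star(n-1)}$ together with the elements $\kukakko{(T, \bigcup_{i=1}^n K_i)}$. The image under $\phi_y$ of such a generator is exactly $\mathcal{K}(T\cup y, \bigcup_{i=1}^n K_i)$, where the $K_i$ are now $n$ components of the link $T \cup y$ in $S^3$; by Corollary \ref{cor_Kauffman} this lies in $(A+1)^n \Q[A,\gyaku{A}]$, and the $(A+1)F^{\star(n-1)}$ part contributes $(A+1)\cdot(A+1)^{n-1}$ by induction. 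Consequently, if $x \in \bigcap_n F^n \skein{\Sigma,J}$ then $\phi_y(x) \in \bigcap_n (A+1)^n \Q[A,\gyaku{A}] = 0$ for every $y$, the last equality because a nonzero Laurent polynomial has finite $(A+1)$-adic order.

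It then remains to invoke Lickorish's theorem to conclude that $\shuugou{\phi_y}$ separates points: writing $x$ in the basis $\mathcal{T}_0(\Sigma,J)$ of Theorem \ref{thm_przy}, the nondegeneracy of the $S^3$-pairing between the skein module of the handlebody $\Sigma \times I$ and that of its complement $H^c$ forces $x = 0$ once all $\phi_y(x)$ vanish. I expect the main obstacle to be precisely this last step in the relative setting $J \neq \emptyset$: Lickorish's theorem is phrased for closed skeins of a handlebody, whereas here the basis tangles carry arcs ending on $J \times \shuugou{\frac{1}{2}}$, so I must either promote the nondegenerate $S^3$-pairing to one that closes up these arcs through $H^c$ compatibly with the basis $\mathcal{T}_0(\Sigma,J)$, or reduce the relative case to the closed case by absorbing the endpoints $J$ into an enlarged surface and applying Theorem \ref{thm_filtration_independent} again. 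Matching the filtration estimate above with a genuinely nondegenerate pairing is the delicate point, since both the choice of dual elements $y$ and the bracket computation must be organized so that the leading $(A+1)$-adic term of $\phi_y(x)$ detects the lowest-order nonzero coefficient of $x$.
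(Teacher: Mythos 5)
Your overall strategy is the paper's: normalize the model using Theorem \ref{thm_filtration_independent}, pair skeins with skeins in the complementary handlebody in $S^3$, bound the image of $F^n$ by $(A+1)^n$ via Lemma \ref{lemm_new_filtration} and Corollary \ref{cor_Kauffman}, and invoke Lickorish's theorem for nondegeneracy; your filtration estimate is exactly the paper's Lemma \ref{lemm_dual_filtration}. However, the two places you flag as ``delicate'' are genuine gaps, and they are precisely where the paper does most of its work. First, even in the closed case $J=\emptyset$, joint injectivity of your maps $\phi_y$ is not an off-the-shelf consequence of Theorem \ref{thm_Lickorish}: that theorem describes the pairing only after specializing $A$ at a primitive $4r$-th root of unity $\gamma$, and only modulo the kernel of the induced form, with a basis whose indices are bounded in terms of $r$. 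To conclude that $\psi : \skein{\Sigma_{0,g+1}} \to \mathrm{Hom}_{\Q[A,\gyaku{A}]}(\skein{\Sigma_{0,g+1}},\Q[A,\gyaku{A}])$ is injective over the Laurent ring, the paper must (i) take $r$ large enough that the finitely many elements of $\mathcal{T}_0(\Sigma_{0,g+1})$ occurring in a given nonzero $x$ fall within Lickorish's bound, so that $\psi(x|_{A=\gamma}) \neq 0$ for a suitable $\gamma$, and (ii) pass from $\C$-coefficients back to $\Q$-coefficients by a linearity and density argument. Your proposal asserts this nondegeneracy without the bridging argument.

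Second, for $J \neq \emptyset$, of your two suggested routes the paper takes the second one (absorbing the endpoints into a larger surface), but this is not achieved by ``applying Theorem \ref{thm_filtration_independent} again'': gluing a band $I \times I$ to $\Sigma$ along two points $P_1, P_2 \in J$ gives an inclusion of surfaces, not a diffeomorphism of pairs, so that theorem says nothing about the induced map $i(P_1,P_2) : \skein{\Sigma,J} \to \skein{\Sigma(P_1,P_2), J \setminus \shuugou{P_1,P_2}}$. Compatibility of this map with the filtrations is immediate, but its injectivity is the substantive point: the paper proves it by exhibiting an explicit basis $\lambda(g,m)$ of $\mathcal{T}_0(\Sigma_{0,g+1},J)$ (Lemma \ref{lemm_lambda_surjective}, with injectivity coming from Theorem \ref{thm_Lickorish}(1)) and showing that closing up the $2m$ arcs carries this basis injectively into Lickorish's basis of $\skein{\Sigma_{0,g+m+1}}$. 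Only then does the composite $\eta$ of band closures embed $\skein{\Sigma,J}$ into $\skein{\tilde{\Sigma}}$ filtration-compatibly, so that the closed case (Lemma \ref{lemm_Hausdorff}) finishes the proof. Your alternative route (a relative $S^3$-pairing with dual elements $y$ whose arcs run through the complement) would in addition require a relative version of Lickorish's theorem that the cited reference does not provide, which is presumably why the paper avoids it.
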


We denote by $\mathbb{V}$ be the subset of ${\Z_{\geq 0}} \times \Z_{\geq 0}
\times \Z_{\geq 0} $
consisting of the triples $(a,b,c) \in \Z_{\geq 0}^3$ which satisfies 
$a+b+c \in 2 \Z_{\geq 0}$ and $\zettaiti{b-c} \leq a \leq b+c$.
For $(a,b,c) \in \mathbb{V}$, we denote the following right figure
by the following left figure.

\begin{picture}(300,160)
\put(0,0){\includegraphics[width=120pt]{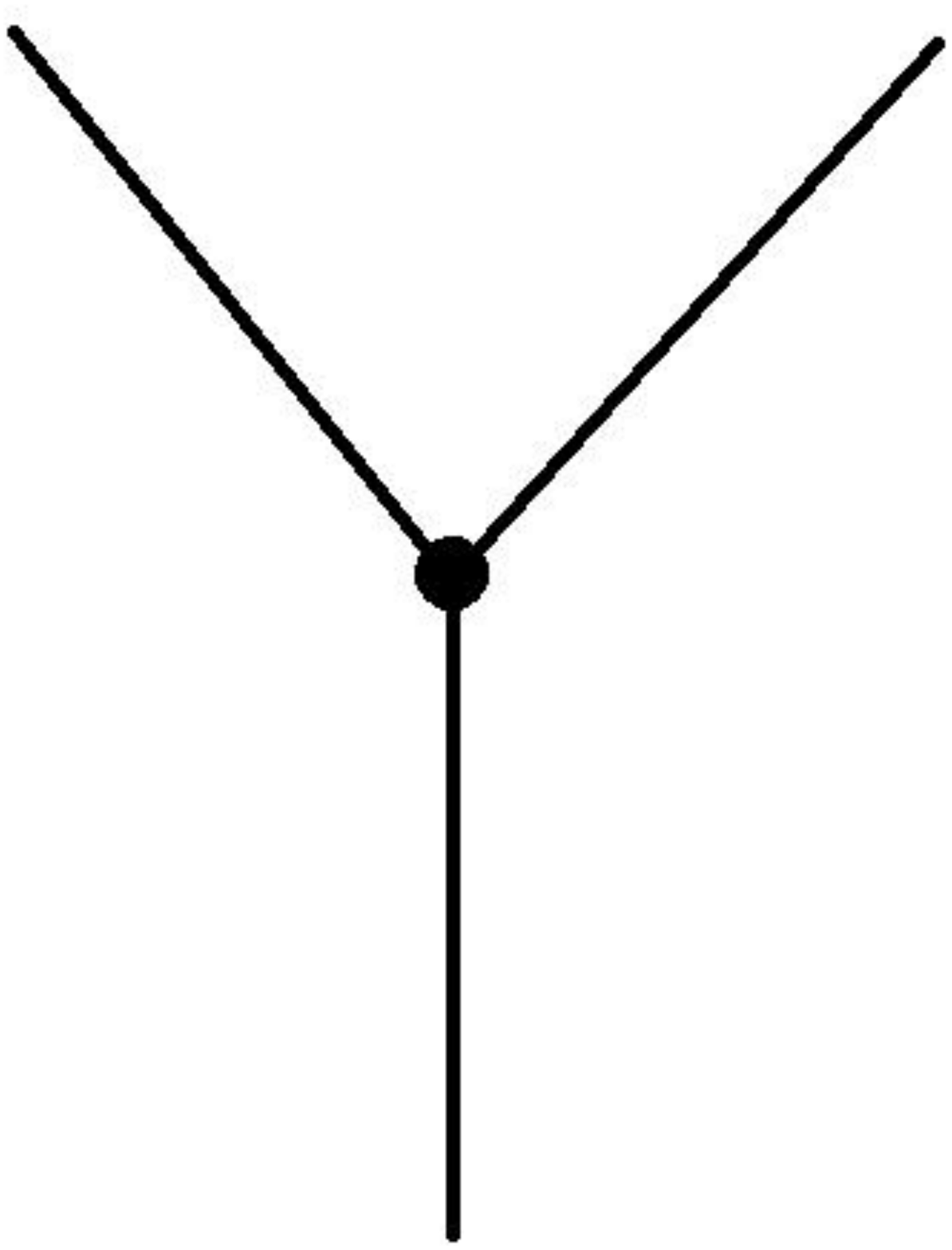}}
\put(120,0){\includegraphics[width=120pt]{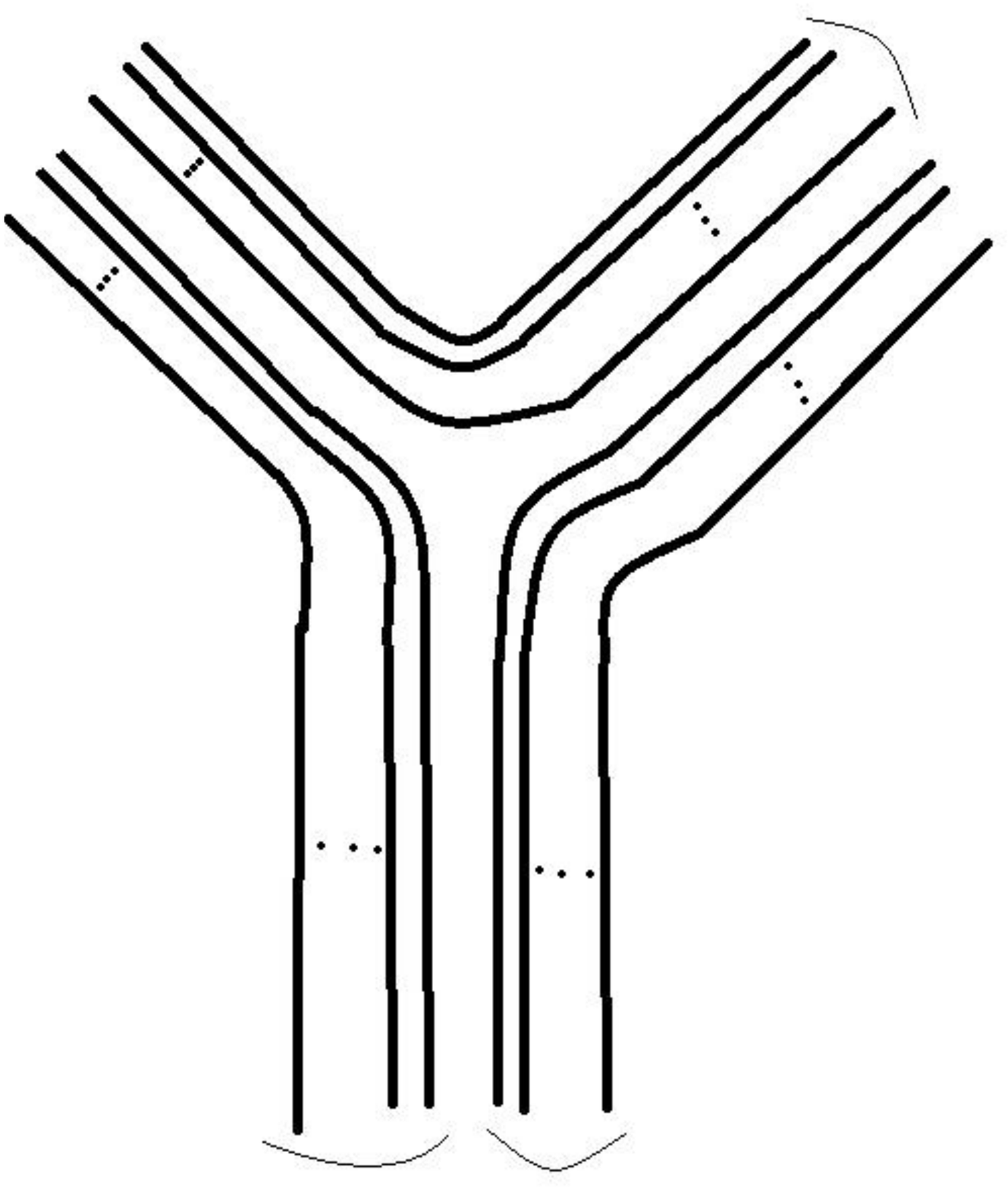}}
\put(60,40){$a$}
\put(30,120){$b$}
\put(75,120){$c$}
\put(124,10){$\frac{a+b-c}{2}$}
\put(190,10){$\frac{c+a-b}{2}$}
\put(220,130){$\frac{b+c-a}{2}$}
\end{picture}

Let $\Sigma_{0,g+1}$ be the surface $D^2 \backslash \coprod_{i=1}^g d_i$
where we denote by $D^2 = \shuugou{(x,y) \in \R^2 | x^2+y^2 \leq 1}$
and by $d_i$ the open disk $\shuugou{(x,y) \in \R^2 |x^2 + (y+1-\frac{i}{g+1})^2
< (\frac{1}{4g+4})^2}$ for $1 \leq i \leq g$.
We denote by $\mathbb{V} (g)$ the set of consisting of all
$(i_1,i_2, \cdots, i_{3g-3})$ which satifying
\begin{align*}
(i_{3j-3}, i_{3j-2}, i_{3j-1}), (i_{3j-1}. i_{3j}, i_{3j+1}) \in \mathbb{V} 
\end{align*}
for $j=1, \cdots ,g-1$. 
Here we define $i_0 \defeq  i_1$ and $i_{3g-2} \defeq
i_{3g-3}$. 
We denote by $\lambda(g,0) (i_1,i_2, \cdots ,i_{3g-3})$
the element of $\mathcal{T}(\Sigma_{0,g+1})$ presented by Figure \ref{figure_handlebody_basis}
for $(i_1,i_2, \cdots ,i_{3g-3}) \in \mathbb{V} (g)$.

\begin{figure}
\begin{picture}(240,320)
\put(0,0){\includegraphics[width=240pt]{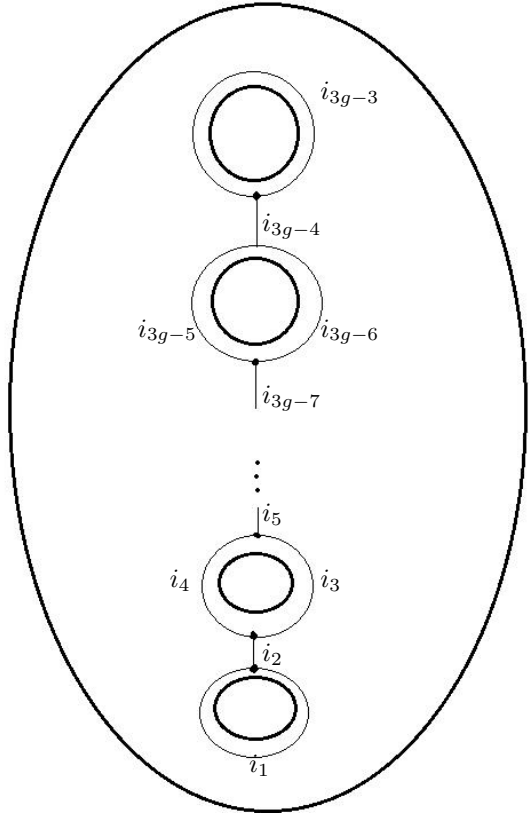}}
\put(120,15){$i_1$}
\put(125,57){$i_2$}
\put(147,85){$i_3$}
\put(90,85){$i_4$}
\put(125,110){$i_5$}
\put(125,155){$i_{3g-7}$}
\put(147,180){$i_{3g-6}$}
\put(78,180){$i_{3g-5}$}
\put(125,220){$i_{3g-4}$}
\put(147,270){$i_{3g-3}$}
\end{picture}
\caption{$\lambda(g,0) (i_1,i_2, \cdots ,i_{3g-3})$}
\label{figure_handlebody_basis}
\end{figure}

Fix an orientation preserving embedding $e_3: D^2 \times I \to S^3$
and a diffeomorphism $e_4 : \Sigma_{0,g+1} \times I \to 
\overline{S^3 \backslash e_3 (\Sigma_{0,g+1} \times I)}$
where we denote the closure of ${S^3 \backslash e_3 (\Sigma_{0,g+1} \times I)}$
by $\overline{S^3 \backslash e_3 (\Sigma_{0,g+1} \times I)}$.
Then we define a bilinear map 
$( \cdot, \cdot): \skein{\Sigma_{0,g+1}} \times \skein{\Sigma_{0,g+1}} \to \Q [A,\gyaku{A}]$
by $(\langle L_1 \rangle,\langle L_2 \rangle)=\mathcal{K} ( e_3(L_1) \cup e_4(L_2)
)$ for $L_1$ and $L_2 \in \mathcal{T} (\Sigma_{0,g+1})$.
The bilinear map induces 
$( \cdot, \cdot): \C \otimes \skein{\Sigma_{0,g+1}} \times 
\C \otimes \skein{\Sigma_{0,g+1}} \to \C [A,\gyaku{A}]$.
Here we denote by $\C [A,\gyaku{A}]$  the ring of Laurent polynomials over
$\C$. For a primitive $2r$-th root $\gamma$, the bilinear map induces
$( \cdot, \cdot): \mathcal{S}^\gamma (\Sigma_{0,g+1}) \times 
\mathcal{S}^\gamma (\Sigma_{0,g+1}) \to \C$ where we denote
by $\mathcal{S}^\gamma (\Sigma_{0,g+1}) \defeq 
\C \otimes \skein{\Sigma_{0,g+1}} /(A-\gamma)\C \otimes \skein{\Sigma_{0,g+1}}$.
The bilinear map induces the linear map
$\psi :\skein{\Sigma_{0,g+1}} \to \mathrm{Hom}_{\Q[A,\gyaku{A}]}(\skein{\Sigma_{0,g+1}},\Q
[A,\gyaku{A}])$ by $v \mapsto (u \mapsto (v,u))$. It induces the linear maps
\begin{align*}
\psi :\C \otimes \skein{\Sigma_{0,g+1}}  &\to
\mathrm{Hom}_{\C [A,\gyaku{A}]} (\C \otimes \skein{\Sigma_{0,g+1}},
\C [A,\gyaku{A}]), \\
\psi :\mathcal{S}^\gamma (\Sigma_{0,g+1}) &\to
\mathrm{Hom}_{\C} (\mathcal{S}^\gamma (\Sigma_{0,g+1}).
\end{align*}
We denote by $\cdot|_{A=\gamma}$
the quotient map $\C \otimes \skein{\Sigma_{0,g+1}} \to \mathcal{S}^\gamma
 (\Sigma_{0,g+1})$.

For a surface $\Sigma$ and any finite subset $J \subset \partial \Sigma$,
we recall that  we denote by $\mathcal{T}_0(\Sigma,J)$ the
set of isotopy classes of $1$-dimensional submanifolds of $\Sigma$
whose boundary is $J$ and that
$\skein{\Sigma,J} $ is freely generated by
$\mathcal{T}_0 (\Sigma,J)$ as a $\Q[A,\gyaku{A}]$-module.
 
\begin{thm}[Lickorish \cite{skeins_and_handlebodies}, P.347, Theorem]
\label{thm_Lickorish}
(1)The map $\lambda(g,0) :\mathbb{V} (g) \to \mathcal{T}_0 (\Sigma_{0,g+1})$ is
bijective.

(2)For a primitive $4r$-th root $\gamma$,
$\mathcal{S}^\gamma (\Sigma_{0,g+1}) / \ker \psi$ is a free $\C$-module with basis
\begin{align*}
\shuugou{\lambda(g,0) (i_1, \cdots , i_{3g-3}) |
(i_{3j-3}, i_{3j-2}, i_{3j-1}), (i_{3j-1}. i_{3j}, i_{3j+1}) \in \mathbb{V},  \\
 2r-4 \geq i_{3j-3}+i_{3j-2}+i_{3j-1}, 2r-4 \geq i_{3j-1}+i_{3j}+i_{3j+1}}.
\end{align*}
\end{thm}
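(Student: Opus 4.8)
The plan is to prove the two parts by different means: part (1) is a normal-form statement about multicurves on a planar surface, whose local engine is a matching lemma in a disc, while part (2) is a non-degeneracy computation for the pairing $(\cdot,\cdot)$ at the root of unity, carried out in the Temperley--Lieb/Jones--Wenzl calculus. Throughout I fix the trivalent spine $G$ of $\Sigma_{0,g+1}$ drawn in Figure \ref{figure_handlebody_basis}; its $2g-2$ vertices, $3g-3$ edges and $g+1$ complementary faces correspond, respectively, to the vertices of the colouring, to the labels $i_1,\dots,i_{3g-3}$, and to the $g+1$ boundary circles. A regular neighbourhood $N(G)$ is all of $\Sigma_{0,g+1}$, so every multicurve is carried by $G$, and $\lambda(g,0)$ is exactly the reconstruction of a carried multicurve from its edge weights, the trivalent-vertex figure recording that $\frac{a+b-c}{2}$ strands turn from the first incident edge to the second, and so on. The engine of part (1) is the following matching lemma in a disc $\Delta$ with three disjoint marked boundary arcs carrying $a$, $b$, $c$ endpoints: a crossingless system of disjoint arcs in $\Delta$, each joining two distinct marked arcs, exists if and only if $(a,b,c)\in\mathbb{V}$, and is then unique up to isotopy, with $\frac{a+b-c}{2}$, $\frac{c+a-b}{2}$, $\frac{b+c-a}{2}$ arcs joining the three pairs. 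I would prove it by solving the linear system $a=x+z$, $b=x+y$, $c=y+z$ for the corner multiplicities, the non-negativity of the solution being equivalent to the triangle inequalities and its integrality to the parity condition, and the uniqueness being that of a planar non-crossing matching.

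For part (1), surjectivity follows by isotoping a given $C\in\mathcal{T}_0(\Sigma_{0,g+1})$ into efficient position with respect to $G$ (transverse to the cocore arcs $\alpha_e$ of the edge-bands, minimizing $\sum_e|C\cap\alpha_e|$, with no closed or returning arcs inside any vertex-disc), setting $i_e\defeq|C\cap\alpha_e|$, and applying the matching lemma at each of the $2g-2$ vertex-discs: this forces $(i_e)\in\mathbb{V}(g)$ and $C=\lambda(g,0)((i_e))$. Injectivity is immediate because each $i_e$ is the geometric intersection number of $C$ with $\alpha_e$, hence an isotopy invariant. The point that needs care, and the main geometric obstacle, is that there is no residual Dehn-twist parameter: two multicurves with the same $(i_e)$ must be genuinely isotopic. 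I would obtain this from planarity — every edge-band is an untwisted disc $I\times I$ and every vertex-disc has a fixed cyclic order of its three sides — so the crossingless matchings supplied by the lemma glue across the bands in a unique, twist-free way, and assembling these local isotopies produces a global isotopy $C\simeq\lambda(g,0)((i_e))$.

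For part (2), by part (1) the elements $\lambda(g,0)(\vec i)$ freely generate $\skein{\Sigma_{0,g+1}}$, hence span $\mathcal{S}^\gamma (\Sigma_{0,g+1})$, and the pairing $(\kukakko{L_1},\kukakko{L_2})=\mathcal{K}(e_3(L_1)\cup e_4(L_2))$ is the evaluation in $S^3$ of the two cabled spines sitting as cores of complementary handlebodies. I would pass from $\lambda(g,0)(\vec i)$ to the idempotent-decorated spines $\bar\lambda(\vec i)$, obtained by inserting the Jones--Wenzl idempotent $f_{i_e}$ on each edge; the change of basis is unitriangular for the admissibility partial order, so the $\bar\lambda(\vec i)$ span the same space. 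Standard recoupling then diagonalizes the pairing in this basis: $(\bar\lambda(\vec i),\bar\lambda(\vec j))$ vanishes unless $\vec i=\vec j$, and the diagonal entry is a product over the vertices of colored theta-symbols $\theta(i_a,i_b,i_c)$ divided by a product over the edges of quantum integers. At the root of unity $\gamma$ these theta-symbols are non-zero precisely in the admissible window $i_a+i_b+i_c\le 2r-4$ at every vertex, and vanish otherwise because a quantum integer in the numerator hits zero; hence a $\bar\lambda(\vec i)$ whose colouring violates the bound at some vertex pairs to zero against every basis element and so lies in $\ker\psi$, whereas the Gram matrix restricted to the colourings satisfying all the bounds is diagonal with non-zero entries, hence non-degenerate. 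This is exactly the assertion that the listed labelings descend to a $\C$-basis of $\mathcal{S}^\gamma(\Sigma_{0,g+1})/\ker\psi$.

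The genuinely hard inputs are therefore the twist-freeness of the planar normal form in part (1) and, in part (2), the non-vanishing of the colored theta-symbols throughout the admissible range $i_a+i_b+i_c\le 2r-4$ at the $4r$-th root of unity; the latter is the computational heart, as it simultaneously controls $\ker\psi$ from above and separates the surviving colourings. Everything else is the bookkeeping of carrying multicurves on $G$ and of reducing cabled graphs by the Temperley--Lieb relations.
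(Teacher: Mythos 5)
Your part (1) is essentially sound: it is the standard normal-curve argument (efficient position with respect to the spine, the matching lemma in each vertex disc, weights as intersection numbers with cocore arcs), and its surjectivity half is in fact the same idea the paper itself gestures at — the paper proves surjectivity of $\lambda(g,m)$ by the innermost-bigon induction of Lemma \ref{lemm_lambda_surjective} and Proposition \ref{prop_zensha_junbi}, and says $\lambda(g,0)$ is handled the same way — while the paper gets injectivity by citing Lickorish rather than by your intersection-number argument. Bear in mind, though, that the paper does not prove this theorem at all: it is quoted from Lickorish \cite{skeins_and_handlebodies}, so the burden of your proposal falls entirely on part (2), and there it breaks.

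The gap in part (2) is concrete: the pairing $(\cdot,\cdot)$ evaluates $e_3(L_1)\cup e_4(L_2)$ in $S^3$, where $e_4$ parametrizes the handlebody \emph{complementary} to $e_3(\Sigma_{0,g+1}\times I)$ in a genus-$g$ Heegaard splitting of $S^3$. The spine of that complementary handlebody is the \emph{dual} graph — each of its handle cores is a meridian circle linking the corresponding handle of $e_3(\Sigma_{0,g+1}\times I)$ once — so the Gram matrix of Jones--Wenzl-decorated spines $\bar\lambda(\vec i)$ against $e_4(\bar\lambda(\vec j))$ is \emph{not} diagonal, and the theta-symbol formula you invoke does not apply. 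Already for $g=1$ (the annulus, so both handlebodies are solid tori), $(\bar\lambda(m),\bar\lambda(n))$ is the Kauffman bracket of the Hopf link with components colored $m$ and $n$, namely $(-1)^{m+n}[(m+1)(n+1)]$ with $[k]=(A^{2k}-A^{-2k})/(A^{2}-A^{-2})$; for instance $m=0$, $n=1$ gives $-(A^{2}+A^{-2})\neq 0$ at a primitive $4r$-th root of unity with $r\geq 3$. The diagonal Gram matrix with entries ``product of thetas over vertices divided by quantum integers over edges'' belongs to a different pairing, the one obtained by doubling the handlebody along the identity of its boundary, i.e.\ evaluating in the connected sum of $g$ copies of $S^1\times S^2$ — not to the Heegaard pairing in $S^3$. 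Consequently both halves of your argument collapse: the claim that colorings violating $i_a+i_b+i_c\leq 2r-4$ pair to zero needs the negligibility of $f_{r-1}$ (an encircling argument), not diagonality, and — more seriously — the linear independence of the surviving colorings is exactly the nondegeneracy of this genuinely non-diagonal matrix (for $g=1$, invertibility of the colored-Hopf-link $S$-matrix restricted to colors $0,\dots,r-2$), which is the actual content of Lickorish's theorem and is nowhere established in your proposal. Your unitriangular change of basis and the reduction of spanning to part (1) are fine; it is the Gram computation that must be replaced by the arguments Lickorish actually uses.
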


We remark that Lickorish gave another basis in \cite{skeins_and_handlebodies}.
Using Theorem in \cite{skeins_and_handlebodies}, we have $\lambda(g,0)$ is injective.
It is proved in a similar way to the proof of Lemma \ref{lemm_lambda_surjective}
in this paper that $\lambda(g,0)$ is surjective.

\begin{lemm}
(1)The $ \C [A, \gyaku{A}]$ module homomorphism
$\psi :\C \otimes \skein{\Sigma_{0,g+1}}  \to
\mathrm{Hom}_{\C [A,\gyaku{A}]} (\C \otimes \skein{\Sigma_{0,g+1}},
\C [A,\gyaku{A}])$
 is injective.

(2)The $\Q [A,\gyaku{A}]$ module homomorphism
\begin{equation*}
\psi :\skein{\Sigma_{0,g+1}} \to
\mathrm{Hom}_{\Q [A,\gyaku{A}]} (\skein{\Sigma_{0,g+1}}, 
\Q [A, \gyaku{A}])
\end{equation*}
 is injective.
\end{lemm}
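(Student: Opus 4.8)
The plan is to deduce injectivity from Lickorish's root-of-unity description of the pairing (Theorem \ref{thm_Lickorish}), after first reducing (2) to (1). Since $\skein{\Sigma_{0,g+1}}$ is a free $\Q[A,\gyaku{A}]$-module (Theorem \ref{thm_przy}), the natural map $\skein{\Sigma_{0,g+1}} \to \C \otimes \skein{\Sigma_{0,g+1}}$ is injective, and by construction the $\C$-pairing restricts to the $\Q$-pairing. Hence if $v \in \skein{\Sigma_{0,g+1}}$ satisfies $(v,u)=0$ for all $u \in \skein{\Sigma_{0,g+1}}$, then by $\C[A,\gyaku{A}]$-bilinearity $(1\otimes v, w)=0$ for all $w \in \C\otimes\skein{\Sigma_{0,g+1}}$, so $1\otimes v \in \ker\psi$ over $\C$; part (1) gives $1\otimes v=0$, whence $v=0$. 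So it suffices to prove (1).

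For (1), I would fix $v \in \C\otimes\skein{\Sigma_{0,g+1}}$ with $\psi(v)=0$ and assume $v\neq 0$, aiming for a contradiction. By Theorem \ref{thm_Lickorish}(1) together with Theorem \ref{thm_przy}, the family $\shuugou{\lambda(g,0)(\mathbf{i}) \mid \mathbf{i}\in\mathbb{V}(g)}$ is a $\C[A,\gyaku{A}]$-basis of $\C\otimes\skein{\Sigma_{0,g+1}}$, so I can write $v=\sum_{\mathbf{i}} c_{\mathbf{i}}(A)\,\lambda(g,0)(\mathbf{i})$ with only finitely many nonzero $c_{\mathbf{i}}\in\C[A,\gyaku{A}]$. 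Choosing $\mathbf{i}_0$ with $c_{\mathbf{i}_0}\neq 0$, the goal becomes to show $c_{\mathbf{i}_0}(\gamma)=0$ for infinitely many values $\gamma$, which forces the Laurent polynomial $c_{\mathbf{i}_0}$ to vanish.

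The key step is specialization. Because the pairing is defined over $\C[A,\gyaku{A}]$ and $\cdot|_{A=\gamma}$ is a ring map onto $\mathcal{S}^\gamma(\Sigma_{0,g+1})$, the identity $\psi(v)=0$ specializes to $(v|_{A=\gamma}, u|_{A=\gamma})=0$ for all $u$, i.e. $v|_{A=\gamma}\in\ker\psi$ and so maps to $0$ in $\mathcal{S}^\gamma(\Sigma_{0,g+1})/\ker\psi$. For each primitive $4r$-th root $\gamma$, take $r$ large enough that every $\mathbf{i}$ in the finite support of $v$ satisfies the inequalities $2r-4 \geq i_{3j-3}+i_{3j-2}+i_{3j-1}$ and $2r-4 \geq i_{3j-1}+i_{3j}+i_{3j+1}$ of Theorem \ref{thm_Lickorish}(2); this is possible since these bounds hold for any fixed tuple once $r$ is large. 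Then each $\lambda(g,0)(\mathbf{i})|_{A=\gamma}$ in the support is one of Lickorish's basis vectors of the quotient, so vanishing of the image of $v|_{A=\gamma}=\sum_{\mathbf{i}} c_{\mathbf{i}}(\gamma)\,\lambda(g,0)(\mathbf{i})|_{A=\gamma}$ forces $c_{\mathbf{i}}(\gamma)=0$ for every such $\mathbf{i}$, in particular $c_{\mathbf{i}_0}(\gamma)=0$. Letting $r$ range over all large integers produces infinitely many distinct $\gamma$, so $c_{\mathbf{i}_0}=0$, a contradiction.

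The part I expect to require the most care is the compatibility (naturality) of all the structures under $\cdot|_{A=\gamma}$: that $\psi$ over $\C[A,\gyaku{A}]$, the induced pairing on $\mathcal{S}^\gamma(\Sigma_{0,g+1})$, and the projection modulo $\ker\psi$ are all intertwined by specialization, so that $\psi(v)=0$ really does force the image of $v|_{A=\gamma}$ in the quotient to vanish. Once this is recorded, the remaining points are routine: that any fixed tuple eventually satisfies Lickorish's inequalities, and that distinct admissible small tuples give linearly independent classes in $\mathcal{S}^\gamma(\Sigma_{0,g+1})/\ker\psi$, which is exactly the content of Theorem \ref{thm_Lickorish}(2).
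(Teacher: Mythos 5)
Your proposal is correct, and it is worth separating the two halves when comparing it with the paper. For part (1) you are doing essentially what the paper does: specialize the pairing at primitive $4r$-th roots of unity and invoke Lickorish's description of $\mathcal{S}^\gamma(\Sigma_{0,g+1})/\ker\psi$ (Theorem \ref{thm_Lickorish}(2)). The only organizational difference is that the paper argues contrapositively with a single specialization --- it picks one $\gamma$ at which some coefficient of $x$ survives, obtains a witness $y\in\mathcal{S}^\gamma(\Sigma_{0,g+1})$ with $(x|_{A=\gamma},y)\neq 0$, and lifts $y$ back to $\C\otimes\skein{\Sigma_{0,g+1}}$ via the identification $\mathcal{S}^\gamma(\Sigma_{0,g+1})=\C\,\mathcal{T}_0(\Sigma_{0,g+1})\hookrightarrow \C[A,\gyaku{A}]\,\mathcal{T}_0(\Sigma_{0,g+1})$ --- whereas you use infinitely many $\gamma$ to force each coefficient Laurent polynomial to vanish; the two are interchangeable, and your write-up makes explicit the choice of large $r$ that the paper leaves implicit in the phrase ``Using Theorem \ref{thm_Lickorish}.'' For part (2), however, you take a genuinely different and simpler route. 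The paper deduces (2) from (1) by an approximation argument: given a complex witness $\sum_j(a_j+b_j\ii)c_j$ pairing nontrivially with $x$, it looks at the coefficient of a fixed power $A^k$ as a linear (hence continuous) map $\R^m\to\R$ and uses density of $\Q$ in $\R$ to produce a rational witness. You instead argue by pure bilinearity: if $v$ pairs to zero with every element of $\skein{\Sigma_{0,g+1}}$, in particular with every basis element of $\mathcal{T}_0(\Sigma_{0,g+1})$, then $\C[A,\gyaku{A}]$-bilinearity forces $1\otimes v$ to pair to zero with all of $\C\otimes\skein{\Sigma_{0,g+1}}$, so (1) gives $1\otimes v=0$, and injectivity of $\skein{\Sigma_{0,g+1}}\to\C\otimes\skein{\Sigma_{0,g+1}}$ (freeness, Theorem \ref{thm_przy}) gives $v=0$. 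Your reduction shows the real-analytic input (continuity, density) in the paper's proof of (2) is unnecessary, precisely because the rational basis $\mathcal{T}_0(\Sigma_{0,g+1})$ spans $\C\otimes\skein{\Sigma_{0,g+1}}$ over $\C[A,\gyaku{A}]$; both arguments prove the same statement, but yours does it with less machinery.
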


\begin{proof}
Let $x$ be an element of $\C \otimes \skein{\Sigma_{0,g+1}} \backslash \shuugou{0}$.
Using Theorem \ref{thm_Lickorish}, we have $\psi (x|_{A=\gamma}) \neq 0$ for some
 primitive $4r$-th root $\gamma$. In other words, we have
$(x|_{A=\gamma},y) \neq 0$ for some $y  \in \mathcal{S}^\gamma (\Sigma_{0,g+1})$. 
We regard $y$ as an element of $\C \otimes \skein{\Sigma_{0,g+1}}$ by 
$\mathcal{S}^\gamma (\Sigma_{0,g+1}) = \C \mathcal{T}_0 (\Sigma_{0,g+1}) \hookrightarrow 
\C [A,\gyaku{A}] \mathcal{T}_0 (\Sigma_{0,g+1}) =\C \otimes \skein{\Sigma_{0,g+1}}$.
Since $(x,y)|_{A=\gamma} =(x|_{A=\gamma},y) \neq 0$, we have
$(x,y) \neq 0$. This proves (1).

Let $x$ be an element of $ \skein{\Sigma_{0,g+1}} \backslash \shuugou{0}$.
We regard $x$ as an element of $\C \otimes \skein{\Sigma_{0,g+1}}$.
By (1), we have $(x,\sum_{j=1}^m (a_j+b_j \ii)c_j) \neq 0$
for some $a_j$ and 
$b_j \in \R$ and $c_j \in \mathcal{T}_0 (\Sigma_{0,g+1})$.
Let $k$ be an integer satisfying the coefficient of $A^k $
in $(x,\sum_{j=1}^m (a_j+b_j \ii)c_j)$ is not $0$.
We denote by $\omega (u_1, \cdots, u_m)$  the coefficient of $A^k$
in $(x, \sum_{j=1}^m u_j c_j)$ for $u_j \in \R$.
Then, $\omega :\R^m \to \R, (u_1, \cdots,u_m) \mapsto
\omega (u_1, \cdots, u_m)$ is a linear map.
Since $\omega$ is linear, $\omega$ is continuous.
By the definition, we have $\omega (a_1, \cdots, a_m) \neq 0$
or $\omega (b_1, \cdots, b_m) \neq 0$. By the density of $\Q$ in $ \R$, we have
$\omega (q_1, \cdots, q_m) \neq 0$ for some $q_1, \cdots, q_m \in \Q$.
Hence we obtain $(x, \sum_{j=1}^m q_j c_j) \neq 0$.
This proves (2).

\end{proof}

To prove Theorem \ref{thm_Hausdorff} in the case $J = \emptyset$,
we need the following lemma.

\begin{lemm}
\label{lemm_dual_filtration}
Let $\Sigma$ be a compact connected oriented surface
with non-empty boundary.
We have 
\begin{equation*}
\psi (F^k \skein{\Sigma}) =
\psi (F^{\star k} \skein{\Sigma})\subset
(A+1)^k \mathrm{Hom}_{\Q [A,\gyaku{A}]} (\skein{\Sigma}, 
\Q [A, \gyaku{A}]) 
\end{equation*}
for $k \in \Z_{ \geq 0}$.
\end{lemm}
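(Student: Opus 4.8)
The plan is to split the statement into its two parts. The equality $\psi(F^k \skein{\Sigma}) = \psi(F^{\star k} \skein{\Sigma})$ is immediate from Lemma \ref{lemm_new_filtration}, which already gives $F^k \skein{\Sigma} = F^{\star k} \skein{\Sigma}$. Thus the real content is the inclusion
\begin{equation*}
\psi(F^{\star k} \skein{\Sigma}) \subset (A+1)^k \mathrm{Hom}_{\Q[A,\gyaku{A}]}(\skein{\Sigma}, \Q[A,\gyaku{A}]),
\end{equation*}
which I would establish by induction on $k$ along the inductive definition of $\filtn{F^{\star n}\skein{\Sigma}}$. The base case $k=0$ is trivial.

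For the inductive step, recall that $F^{\star k}\skein{\Sigma}$ is generated as a $\Q[A,\gyaku{A}]$-module by $(A+1)F^{\star(k-1)}\skein{\Sigma}$ together with the elements $\kukakko{(T, \bigcup_{i=1}^k K_i)}$, where $T \in \mathcal{T}(\Sigma)$ and $K_1, \dots, K_k$ are closed components of $T$. On the first family the conclusion follows at once from the inductive hypothesis, since $\psi\bigl((A+1)x\bigr) = (A+1)\psi(x)$ lies in $(A+1)\cdot(A+1)^{k-1}\mathrm{Hom}_{\Q[A,\gyaku{A}]}(\skein{\Sigma},\Q[A,\gyaku{A}]) = (A+1)^k \mathrm{Hom}_{\Q[A,\gyaku{A}]}(\skein{\Sigma},\Q[A,\gyaku{A}])$. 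So everything reduces to showing that $(\kukakko{(T, \bigcup_{i=1}^k K_i)}, u) \in (A+1)^k \Q[A,\gyaku{A}]$ for all $u \in \skein{\Sigma}$, and by $\Q[A,\gyaku{A}]$-bilinearity it is enough to take $u = \kukakko{L_2}$ with $L_2 \in \mathcal{T}(\Sigma)$.

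The key step is to recognize this pairing as an honest Kauffman bracket in $S^3$. Using $(\kukakko{L_1},\kukakko{L_2}) = \mathcal{K}(e_3(L_1)\cup e_4(L_2))$ and expanding the $2$-power-weighted sum over sub-collections that defines $(T, \bigcup_{i=1}^k K_i)$, I would verify the identity
\begin{equation*}
\bigl(\kukakko{(T, \textstyle\bigcup_{i=1}^k K_i)}, \kukakko{L_2}\bigr) = \mathcal{K}\bigl(e_3(T)\cup e_4(L_2),\ \textstyle\bigcup_{i=1}^k e_3(K_i)\bigr).
\end{equation*}
Indeed, $e_3$ sends $T = T' \cup \bigcup_i K_i$ to a framed link in $S^3$ whose distinguished components are exactly $e_3(K_1),\dots,e_3(K_k)$, it commutes with forming the weighted sum over sub-collections of those components, and the tangle $L_2$ — pushed into the complementary handlebody by $e_4$ — rides along inside the fixed complement $L' = e_3(T')\cup e_4(L_2)$. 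Once this identity is secured, Corollary \ref{cor_Kauffman} applies directly to the link $e_3(T)\cup e_4(L_2)$ with its $k$ distinguished components $e_3(K_i)$ and yields membership in $(A+1)^k \Q[A,\gyaku{A}]$, completing the induction. The same reasoning works for any $\Sigma$ once $\Sigma \times I$ is fixed as one side of a Heegaard splitting of $S^3$ with complement realized by $e_4$, exactly as for $\Sigma_{0,g+1}$.

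The main obstacle is the bookkeeping in that identity: one must check termwise that the $2^{k-j}$-weighted sum over sub-collections $S \subset \shuugou{1,\dots,k}$ defining $(T,\bigcup_{i=1}^k K_i)$ in $\skein{\Sigma}$ is transported by the pairing to the corresponding sum defining $\mathcal{K}(\cdot,\bigcup\cdot)$ in $S^3$. This is routine after fixing $L'$ as the common complementary link, but it is the step where a missing factor of $2$ or a mismatch in which components are distinguished would break the argument, so it deserves explicit care rather than a wave of the hand.
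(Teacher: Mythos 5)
Your proposal is correct and takes essentially the same approach as the paper: the paper's own (very terse) proof likewise identifies the pairing of a generator $\kukakko{(T,\bigcup_{i=1}^k K_i)}$ against $\kukakko{L_2}$ with $\mathcal{K}(e_3(T)\cup e_4(L_2),\bigcup_{i=1}^k e_3(K_i))$ and invokes Corollary \ref{cor_Kauffman}; your induction over the generators of $F^{\star k}\skein{\Sigma}$ and the sub-collection bookkeeping simply make explicit what the paper leaves implicit. The only minor difference is that the paper handles general $\Sigma$ by reducing to $\Sigma_{0,g+1}$ via Theorem \ref{thm_filtration_independent}, whereas you choose a Heegaard-type embedding of $\Sigma\times I$ into $S^3$ directly, which amounts to the same thing.
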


\begin{proof}
By Theorem \ref{thm_filtration_independent},
it is sufficient to prove the lemma in the case $\Sigma = \Sigma_{0,g+1}$.
Let $L$ and $L'$ be links in 
$\Sigma_{0,g+1} \times I$ and $K_1, \cdots, K_k$ components of $L$.
By Corollary \ref{cor_Kauffman},
we have $\mathcal{K} (e_3(L) \cup e_4 (L'),\cup_{i=1}^k  e_3(K_i))
 \in (A+1)^k \Q [A,\gyaku{A}]$.
This proves the lemma.

\end{proof}

\begin{lemm}[A special case of Theorem \ref{thm_Hausdorff}]
\label{lemm_Hausdorff}
Let $\Sigma$ be a compact connected oriented surface
with non-empty boundary. We have
$\cap_{i=1}^\infty F^n \skein{\Sigma}=0$.
\end{lemm}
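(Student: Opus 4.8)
The plan is to reduce the general case to the model planar surface $\Sigma_{0,g+1}$ and then read off the conclusion from the nondegeneracy of the pairing $\psi$. First I would observe that since $\Sigma$ has non-empty boundary, $\pi_1(\Sigma)$ is free of some finite rank $g$ (namely $g = 2h+b-1$ if $\Sigma$ has genus $h$ and $b \geq 1$ boundary components), so that $\Sigma \times I$ is a genus-$g$ handlebody. As $\Sigma_{0,g+1} \times I$ is also a genus-$g$ handlebody, there is a diffeomorphism $\xi : \Sigma \times I \to \Sigma_{0,g+1} \times I$. By Theorem \ref{thm_filtration_independent} (applied with $J = J' = \emptyset$) this diffeomorphism satisfies $\xi(F^n \skein{\Sigma}) = F^n \skein{\Sigma_{0,g+1}}$ for every $n$, hence $\xi(\cap_n F^n \skein{\Sigma}) = \cap_n F^n \skein{\Sigma_{0,g+1}}$. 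Thus it suffices to prove the statement for $\Sigma = \Sigma_{0,g+1}$, where the machinery of $\psi$ and Lemma \ref{lemm_dual_filtration} is available.

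Next I would take $x \in \cap_n F^n \skein{\Sigma_{0,g+1}}$ and show $x = 0$ by dualizing. For each fixed $n$, Lemma \ref{lemm_dual_filtration} gives
\begin{equation*}
\psi(x) \in (A+1)^n \mathrm{Hom}_{\Q[A,\gyaku{A}]}(\skein{\Sigma_{0,g+1}}, \Q[A,\gyaku{A}]).
\end{equation*}
Evaluating on an arbitrary basis element $u \in \mathcal{T}_0(\Sigma_{0,g+1})$ yields $(x,u) = \psi(x)(u) \in (A+1)^n \Q[A,\gyaku{A}]$, and since this holds for every $n$, the value $(x,u)$ lies in $\cap_n (A+1)^n \Q[A,\gyaku{A}]$. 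Because $\Q[A,\gyaku{A}]$ is a Laurent polynomial ring in which $A+1$ is a non-unit that is not a zero divisor, every nonzero element has finite $(A+1)$-adic order, so this intersection is $0$. Therefore $(x,u) = 0$ for all $u$, i.e. $\psi(x) = 0$; by the injectivity of $\psi$ on $\skein{\Sigma_{0,g+1}}$ established above, $x = 0$. Running this through the diffeomorphism $\xi$ from the first paragraph gives $\cap_n F^n \skein{\Sigma} = 0$.

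I expect the substantive content to be entirely front-loaded into the two inputs already proved: the filtration-invariance under diffeomorphisms of $\Sigma \times I$ (Theorem \ref{thm_filtration_independent}) and the pair of facts about $\psi$ (its injectivity on $\skein{\Sigma_{0,g+1}}$ together with the containment $\psi(F^k) \subset (A+1)^k \mathrm{Hom}$ of Lemma \ref{lemm_dual_filtration}). Given these, the argument is formal. The one point requiring care is the reduction step: I must invoke the standard fact that the product of a compact connected surface with non-empty boundary and an interval is a handlebody whose genus equals the rank of its free fundamental group, so that $\Sigma \times I$ and $\Sigma_{0,g+1}\times I$ are genuinely diffeomorphic and Theorem \ref{thm_filtration_independent} applies. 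The non-emptiness of $\partial\Sigma$ is essential throughout, both for the handlebody model and for the validity of Lemma \ref{lemm_dual_filtration}.
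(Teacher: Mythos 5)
Your proposal is correct and follows essentially the same route as the paper: reduce to $\Sigma_{0,g+1}$ via Theorem \ref{thm_filtration_independent}, apply Lemma \ref{lemm_dual_filtration} to place $\psi(x)$ in $\bigcap_n (A+1)^n \mathrm{Hom}_{\Q[A,\gyaku{A}]}(\skein{\Sigma_{0,g+1}},\Q[A,\gyaku{A}]) = 0$, and conclude by injectivity of $\psi$. The only difference is that you spell out two steps the paper leaves implicit, namely the handlebody argument justifying the diffeomorphism $\Sigma \times I \cong \Sigma_{0,g+1} \times I$ and the evaluation on basis elements showing the intersection vanishes.
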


\begin{proof}
By Theorem \ref{thm_filtration_independent},
it is sufficient to prove the lemma in the case $\Sigma = \Sigma_{0,g+1}$.
By Lemma \ref{lemm_dual_filtration},
we have 
\begin{equation*}
\psi (\cap_{i=1}^\infty F^n \skein{\Sigma_{0,g+1}})
\subset \cap_{i=1}^\infty (A+1)^i \mathrm{Hom}_{\Q [A,\gyaku{A}]} (\skein{\Sigma}, 
\Q [A, \gyaku{A}]) =0.
\end{equation*}
Since $\psi$ is injective, we have
$\cap_{i=1}^\infty F^n \skein{\Sigma}=0$.
This proves the lemma.
\end{proof}

\bigskip

For $g \geq 1$ and $m \geq 1$, 
we denote by $\mathbb{V} (g,m)$ the set of consisting of
all $(i_1, i_2, \cdots, i_{2m-1}, j_1, j_2, \cdots, j_{3g-2})$ which 
satisfy $(i_{k-1},i_k,1) \in \mathbb{V}$ for $k=1, \cdots, 2m-1$,
$(j_{3k-2}, j_{3k-1}, j_{3k}), (j_{3k}, j_{3k+1}, j_{3k+2}) \in \mathbb{V}$
for $k=1, \cdots, g-1$ and $(i_{2n-1},j_1,j_2)$ where we define
by $i_0  \defeq 1$ and $ i_{3g-1} \defeq i_{3g-2}$.
Let $J$ be  a finite subset of $ \partial D^2 \subset \partial \Sigma_{0,g+1}$
satisfying $\sharp J =2m$.
We define a map $\lambda(g,m) :\mathbb{V} (g,m) \to
\mathcal{T}(\Sigma_{0,g+1},J)$ by 
$\lambda(g,m)(i_1, i_2, \cdots, i_{2m-1}, j_1, j_2, \cdots, j_{3g-2})$
looks as in Figure \ref{figure_tangle_basis} for any $(i_1, i_2, \cdots, i_{2m-1}, j_1, j_2, \cdots, j_{3g-2})
 \in \mathbb{V}(g,m)$.

\begin{figure}
\begin{flushleft}
\begin{picture}(200,240)
\put(0,-120){\includegraphics[width=360pt]{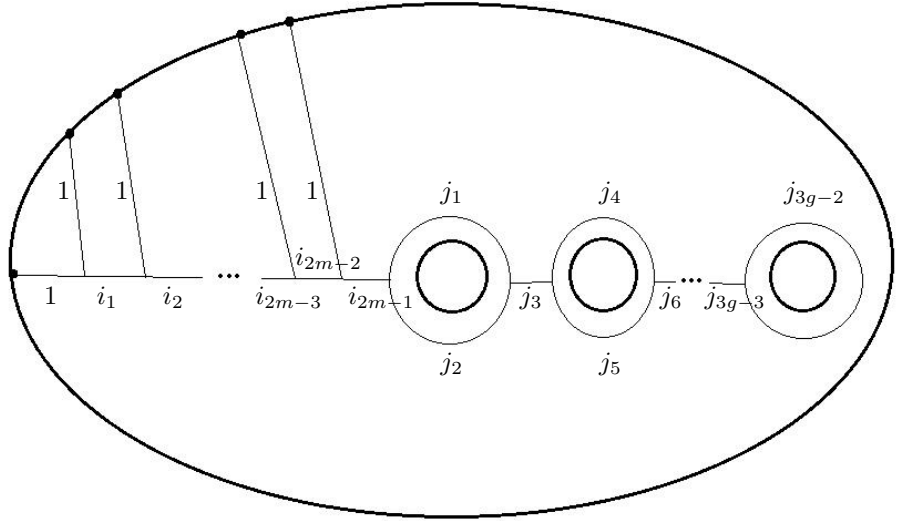}}
\put(20,95){$1$}
\put(40,95){$i_1$}
\put(65,95){$i_2$}
\put(100,95){$i_{2m-3}$}
\put(115,110){$i_{2m-2}$}
\put(135,95){$i_{2m-1}$}
\put(200,95){$j_3$}
\put(253,95){$j_6$}
\put(270,95){$j_{3g-3}$}
\put(170,135){$j_1$}
\put(230,135){$j_4$}
\put(300,135){$j_{3g-2}$}
\put(170,70){$j_2$}
\put(230,70){$j_5$}
\put(25,135){$1$}
\put(47,135){$1$}
\put(100,135){$1$}
\put(119,135){$1$}
\end{picture}
\end{flushleft}
\label{figure_tangle_basis}
\caption{$\lambda(g,m)(i_1, i_2, \cdots, i_{2m-1}, j_1, j_2, \cdots, j_{3g-2})$}
\end{figure}

\begin{lemm}
\label{lemm_lambda_surjective}
For $g \geq 1$ and $m \geq 1$, $\lambda (g,m):\mathbb{V} (g,m) \to
\mathcal{T}(\Sigma_{0,g+1},J)$ is surjective.
\end{lemm}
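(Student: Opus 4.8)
Since each value $\lambda(g,m)(i_1,\dots,i_{2m-1},j_1,\dots,j_{3g-2})$ is a crossingless $1$-submanifold with no inessential component, the image of $\lambda(g,m)$ already lies in $\mathcal{T}_0(\Sigma_{0,g+1},J) \subset \mathcal{T}(\Sigma_{0,g+1},J)$, so the plan is to show that it exhausts $\mathcal{T}_0(\Sigma_{0,g+1},J)$; the argument is the natural extension of the $m=0$ case (surjectivity of $\lambda(g,0)$), the new feature being the bookkeeping of the $2m$ boundary legs of colour $1$. First I would fix the cut system $C$ dual to the trivalent spine of Figure~\ref{figure_tangle_basis}: a disjoint family of properly embedded arcs, one transverse to each edge of the spine, chosen so that cutting $\Sigma_{0,g+1}$ along $C$ yields disjoint disks, each a regular neighbourhood of a single vertex. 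An interior trivalent vertex gives a hexagonal disk whose boundary meets $C$ in three arcs (the three edge-sides), and a vertex carrying a boundary leg gives a disk with two cut-sides together with one marked point of $J$ on its boundary.

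Given $T \in \mathcal{T}_0(\Sigma_{0,g+1},J)$, represented by an embedded $1$-submanifold with no inessential component, I would put $T$ into minimal position with respect to $C$, i.e. minimise the total intersection number $\sum_{c \subset C} \zettaiti{T \cap c}$ within the isotopy class. The point, established by the usual innermost-disk and bigon-removal argument and by the hypothesis that $T$ has no inessential component, is that in minimal position no component of $T$ can be isotoped off a cut arc; consequently, inside each disk piece, $T$ meets the boundary only in the cut-sides (and the marked points) and contains no arc returning to the same cut-side. I would then define the labels $i_k$ and $j_l$ as the geometric intersection numbers $\zettaiti{T \cap c}$ of $T$ with the corresponding dual arcs $c$.

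The heart of the matter is the local classification in the hexagonal pieces: on a disk whose boundary carries $a$, $b$, $c$ endpoints on its three cut-sides, a crossingless disjoint arc family with no side-returning arc exists and is unique up to isotopy rel boundary if and only if $(a,b,c) \in \mathbb{V}$, in which case it consists of $\frac{a+b-c}{2}$, $\frac{b+c-a}{2}$ and $\frac{c+a-b}{2}$ arcs joining the three pairs of cut-sides, exactly as in the trivalent figure. Here integrality of these three counts is the parity condition $a+b+c \in 2\Z_{\geq 0}$ and their non-negativity is the triangle inequality $\zettaiti{b-c} \leq a \leq b+c$. The analogous, simpler model on each marked-point disk, where one of the three colours is forced to be $1$, shows in the same way that $(i_{k-1},i_k,1) \in \mathbb{V}$, so that $i_k = i_{k-1} \pm 1$. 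Applying these models to every piece simultaneously proves that the tuple of labels lies in $\mathbb{V}(g,m)$ and that, on each piece, $T$ agrees up to isotopy rel $C$ with the corresponding piece of $\lambda(g,m)(i_1,\dots,j_{3g-2})$.

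Finally I would glue these rel-boundary isotopies along the arcs of $C$ to obtain a global isotopy $T \simeq \lambda(g,m)(i_1,\dots,j_{3g-2})$, which proves surjectivity; the gluing can be organised by induction on the number $g$ of holes, peeling off one hole along a cut arc and invoking the case of $\Sigma_{0,g}$. The main obstacle is the passage from minimal position to standard form in each piece: one has to exclude configurations in which a component wraps around a hole or runs parallel to a cut arc in a non-standard way, and it is precisely here that both the absence of inessential components and the careful bigon/return-arc removal are needed, and where the parity and triangle conditions defining $\mathbb{V}$ are forced rather than assumed.
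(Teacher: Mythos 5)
Your proof is correct and takes essentially the same route as the paper: the paper's Proposition \ref{prop_zensha_junbi} is exactly a bigon-removal argument with respect to the fixed dual system $\mathbb{L} = (\bigcup_q \mathbb{I}_q) \cup (\bigcup_r \mathbb{J}_r)$, which plays the role of your cut system $C$, with induction on the number of innermost bigons in place of your minimisation of total geometric intersection number. The only substantive difference is level of detail: the paper's base case (no bigons implies standard form $\lambda(g,m)(v)$) is asserted essentially by definition, whereas you derive it from the local hexagon classification that forces the parity and triangle conditions defining $\mathbb{V}$ — a useful elaboration, but not a different method.
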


\begin{proof}
We use the following proposition.
For any $L \in \mathcal{T}_0 (\Sigma_{0,g+1},J)$, there exists $\tilde{L}$
representing $L$ and satisfying the above conditions in
Proposition \ref{prop_zensha_junbi}.
This proves the lemma.

\end{proof}

Let $\mathbb{I}_1, \cdots, \mathbb{I}_{2m-1},
\mathbb{J}_1, \cdots, \mathbb{J}_{3g-2}
$ be one-dimensional submanifolds  of $\Sigma_{0,g+1}$
 as in Figure \ref{figure_mathbb_L}. We denote by$\mathbb{L} \defeq
(\bigcup_{q}^{2m-1} \mathbb{I}_q)
\cup (\bigcup_{r}^{3g-2} \mathbb{J}_r)$.

\begin{figure}
\begin{flushleft}
\begin{picture}(200,240)
\put(0,-120){\includegraphics[width=360pt]{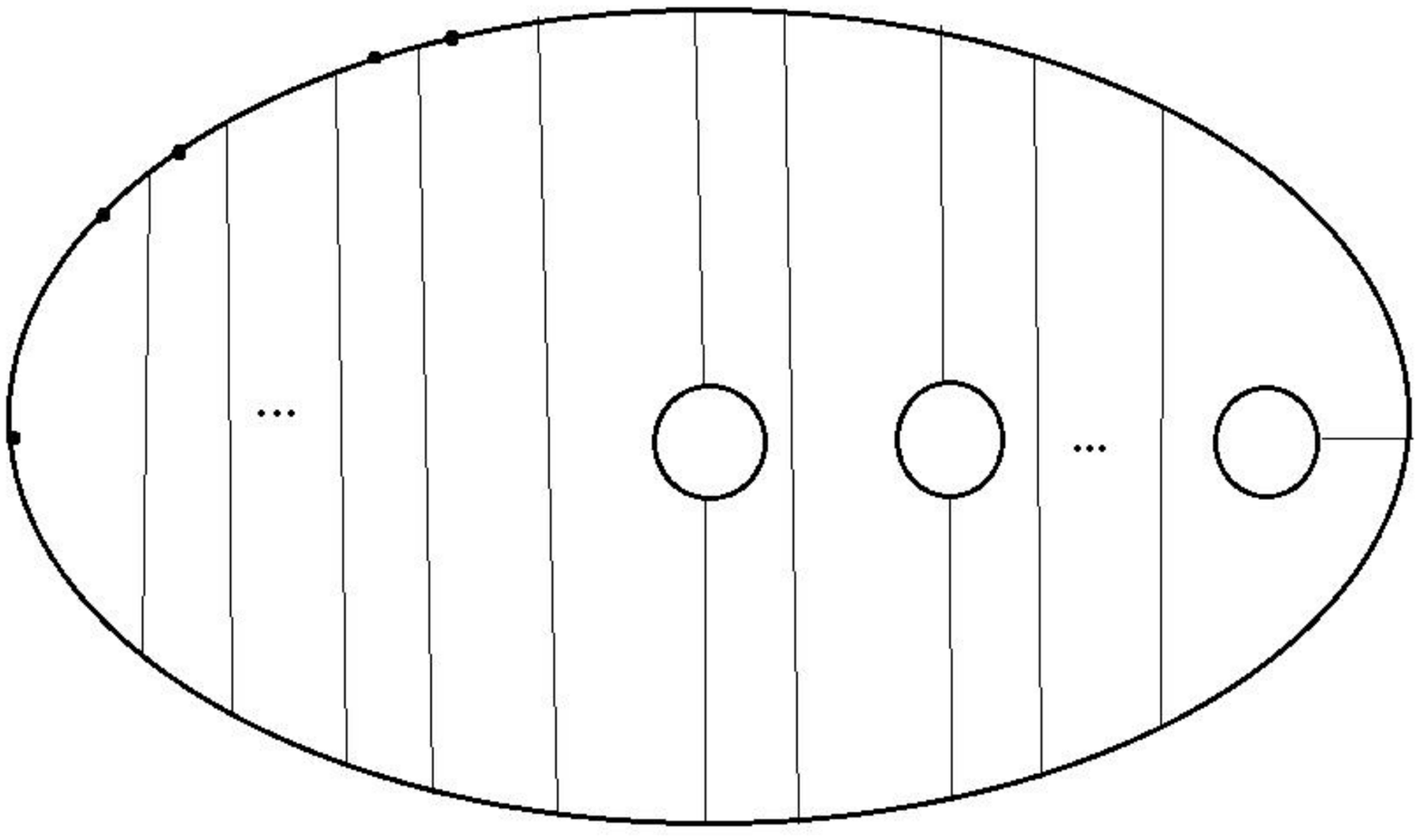}}
\put(40,95){$\mathbb{I}_1$}
\put(65,95){$\mathbb{I}_2$}
\put(85,95){$\mathbb{I}_{2m-3}$}
\put(108,110){$\mathbb{I}_{2m-2}$}
\put(135,95){$\mathbb{I}_{2m-1}$}
\put(195,95){$\mathbb{J}_3$}
\put(253,95){$\mathbb{J}_6$}
\put(270,95){$\mathbb{J}_{3g-3}$}
\put(170,135){$\mathbb{J}_1$}
\put(230,135){$\mathbb{J}_4$}
\put(317,117){$\mathbb{J}_{3g-2}$}
\put(170,70){$\mathbb{J}_2$}
\put(230,70){$\mathbb{J}_5$}
\end{picture}
\end{flushleft}
\caption{$\mathbb{L}$}
\label{figure_mathbb_L}
\end{figure}

We prove the following proposition by induction on $n$.

\begin{prop}[$n$]
\label{prop_zensha_junbi}
 Let $\tilde{L}$ be a one-dimensional submanifold of
$\Sigma_{0,g+1}$ satisfying the following conditions.
\begin{itemize}
\item There is no closed disk $d$ in 
$\Sigma_{0,g+1}$ such that $\partial d \subset \tilde{L}$.
\item We have $\partial \tilde{L} =J$.
\item The intersections $\tilde{L} \cap \mathbb{L}$
consist transverse double points.
\end{itemize}
We denote by $\alpha (\tilde{L}) $ the set consisting of all
$P \in \tilde{L} \cap \mathbb{L}$ satisfying the following condition
\begin{equation*}
e(\shuugou{(x,y) \in \partial D^2 |x \geq 0}) \subset \tilde{L},
e(\shuugou{(x,y) \in \partial D^2|x \leq 0}) \subset  \mathbb{L}, 
e(1,0) =P.
\end{equation*}
for some $e: D^2 \to \Sigma_{0,g+1}$,
Then we have $\sharp (\alpha (\tilde{L})) \leq n 
\Rightarrow L \in \lambda(g,m)(\mathbb{V}(g,m))$ where
we denote by $L$ the isotopy class of $\tilde{L}$.
\end{prop}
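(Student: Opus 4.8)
The plan is to prove Proposition~\ref{prop_zensha_junbi} by induction on $n$, carrying out a bigon-removal argument that isotopes $\tilde{L}$ into normal form with respect to the cut system $\mathbb{L}$. The geometric input is that $\mathbb{L}=(\bigcup_q \mathbb{I}_q)\cup(\bigcup_r \mathbb{J}_r)$ cuts $\Sigma_{0,g+1}$ into disks; consequently a $1$-submanifold meeting $\mathbb{L}$ with no reducible configuration is determined up to isotopy by its geometric intersection numbers with the arcs $\mathbb{I}_1,\dots,\mathbb{I}_{2m-1}$ and $\mathbb{J}_1,\dots,\mathbb{J}_{3g-2}$. These numbers are exactly the labels $(i_\bullet,j_\bullet)$ appearing in $\lambda(g,m)$, and the parity and triangle inequalities defining $\mathbb{V}(g,m)$ are precisely the matching conditions forced where three arcs bound a common region.

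For the base case $n=0$ I would show that $\alpha(\tilde{L})=\emptyset$ forces $\tilde{L}$ into normal form. Cutting along $\mathbb{L}$, each component of $\tilde{L}$ lies in a complementary disk as an arc joining two of its boundary pieces; an arc returning to the same boundary piece, or a component bounding a disk, would produce a reducible configuration, hence a point of $\alpha(\tilde{L})$, or violate the first hypothesis that $\tilde{L}$ has no inessential component. Reading off the intersection numbers of $L$ with the arcs of $\mathbb{L}$ gives a tuple in $\mathbb{V}(g,m)$ whose associated standard tangle $\lambda(g,m)(i_\bullet,j_\bullet)$ is isotopic to $\tilde{L}$, so $L\in\lambda(g,m)(\mathbb{V}(g,m))$.

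For the inductive step, assuming Proposition~\ref{prop_zensha_junbi} for $n-1$, I would take $\tilde{L}$ with $1\le\sharp\alpha(\tilde{L})\le n$ and pick $P\in\alpha(\tilde{L})$ together with an \emph{innermost} witnessing disk $e\colon D^2\to\Sigma_{0,g+1}$, chosen so that $e(D^2)$ meets $\tilde{L}\cup\mathbb{L}$ only along $e(\partial D^2)$. Pushing the arc $e(\shuugou{x\ge 0})\subset\tilde{L}$ across $e(D^2)$ past the arc $e(\shuugou{x\le 0})\subset\mathbb{L}$ is an isotopy of $\Sigma_{0,g+1}$ fixing $\partial\Sigma_{0,g+1}$ (hence fixing $J$) and creating no inessential component; it yields $\tilde{L}'$ representing the same class $L$ with the configuration at $P$ removed. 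The crucial bookkeeping claim is $\sharp\alpha(\tilde{L}')\le n-1$, and then Proposition~\ref{prop_zensha_junbi} for $n-1$ gives $L\in\lambda(g,m)(\mathbb{V}(g,m))$.

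The main obstacle is exactly this bookkeeping. One must arrange the witnessing disk to be innermost so that the isotopy is supported inside $e(D^2)$ and only removes intersections, and then verify that it does not create a new element of $\alpha$ elsewhere. I would secure innermostness by choosing, among all reducible disks, one whose interior contains the fewest points of $\tilde{L}\cup\mathbb{L}$, and arguing that a minimal such disk is embedded and disjoint from the rest of the configuration; the boundary half-disks along $\partial D^2$, where the endpoints $J$ lie, require a separate local model since there the move must keep the marked points of $J$ fixed. With innermostness in hand, the strict decrease of $\sharp\alpha$ is immediate and the induction closes; applied with $n=\sharp\alpha(\tilde{L})$, this is exactly what Lemma~\ref{lemm_lambda_surjective} needs.
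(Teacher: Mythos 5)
Your proposal follows essentially the same route as the paper: induction on $n = \sharp\alpha(\tilde{L})$, with the base case read off from the normal position of $\tilde{L}$ relative to $\mathbb{L}$, and the inductive step given by isotoping $\tilde{L}$ across a witnessing bigon disk to strictly decrease $\sharp\alpha$. Your added care about choosing an innermost disk and verifying that the move creates no new points of $\alpha$ fills in details the paper leaves implicit (the paper only requires $e(\shuugou{(x,y)\in\partial D^2 \mid x\le 0})\subset\mathbb{L}\setminus\tilde{L}$ and asserts $\sharp\alpha(\tilde{L}')<n$ from a figure), but the underlying argument is the same.
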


\begin{proof}
By definition, we have Proposition \ref{prop_zensha_junbi} ($0$).
We assume $n>0$ and Proposition \ref{prop_zensha_junbi} ($n-1$).
Let $\tilde{L}$ be a one-dimensional submanifold of $\Sigma_{0,g+1}$
satisfying the above conditions and $\sharp \alpha (\tilde{L}) =n$.
Since $\sharp \alpha (\tilde{L}) >0$, 
there exists an embedding $e: D^2 \to \Sigma_{0,g+1}$
such that $e(\shuugou{(x,y) \in \partial D^2 |x \geq 0}) \subset \tilde{L}$,
$e(\shuugou{(x,y) \in \partial D^2|x \leq 0}) \subset  \mathbb{L} \backslash \tilde{L}$.
Choose $\tilde{L}'$ a one-dimensional submanifold of 
$\Sigma_{0,g+1}$ which is $\tilde{L}$ except for the neighborhood of 
$e(D)$, where it looks as shown in the figure.

\begin{picture}(200,100)
\put(0,-50){\includegraphics[width=150pt]{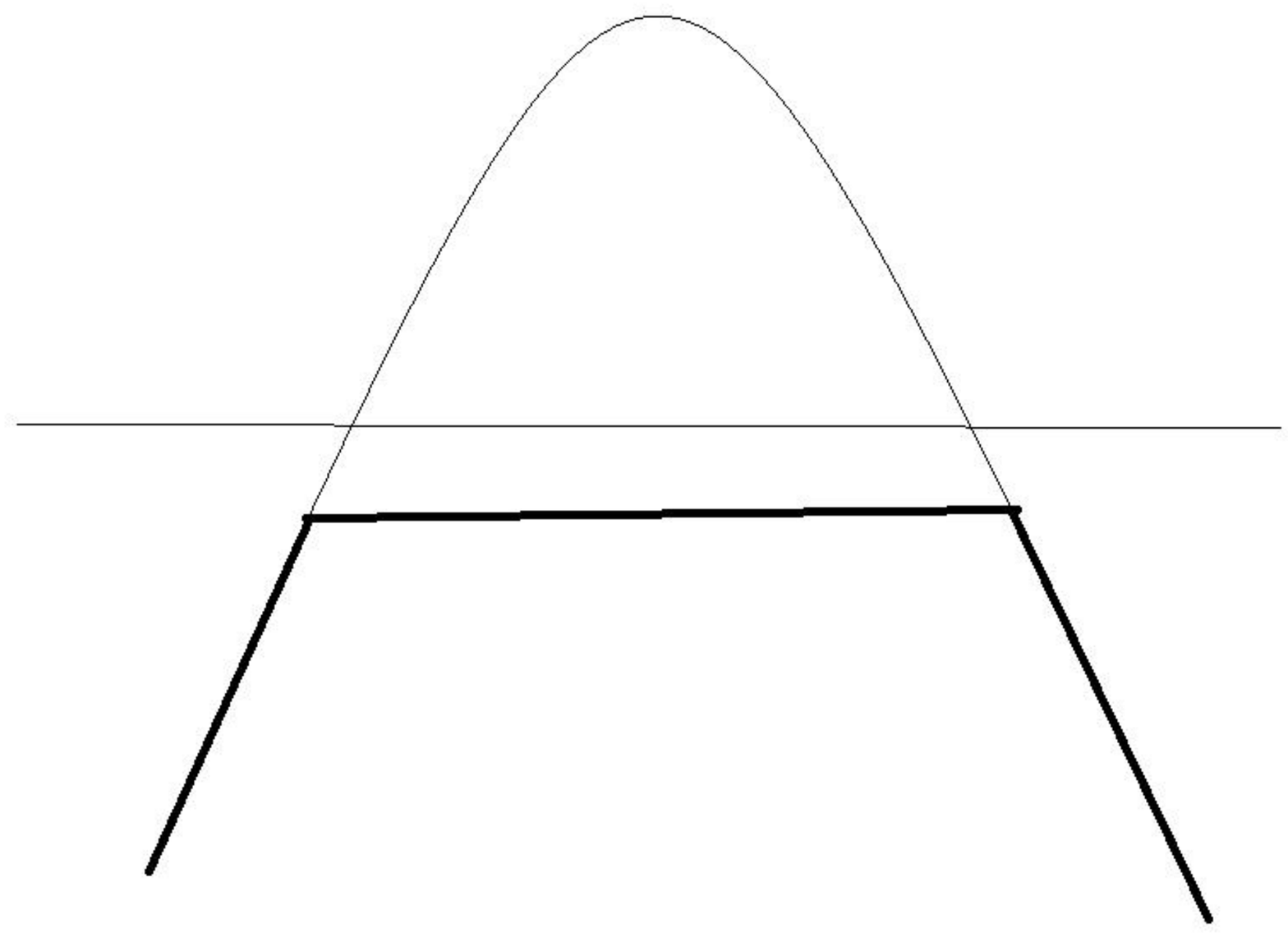}}
\put(73,63){$e(D)$}
\put(10,50){$\mathbb{L}$}
\put(57,70){$\tilde{L}$}
\put(43,20){$\tilde{L}'$}
\end{picture}

Since $\tilde{L} \simeq \tilde{L}'$ and $\sharp (\alpha(\tilde{L}')) <n$,
we have $L \in \lambda(g,m)(\mathbb{V}(g,m))$ where
we denote by $L$ the isotopy class of $\tilde{L}$.
This proves Proposition \ref{prop_zensha_junbi} ($n$) for any $n \geq 0$. 
\end{proof}

We define an injective map 
\begin{align*}
&\iota (g,m) :\mathbb{V} (g,m) \to \mathbb{V}(g+m) \\
&(i_1,i_2, \cdots,i_{2m-1}, j_1,j_2, \cdots,j_{3g-2}) \mapsto \\
&(1,i_1,1,i_2,i_3,1, \cdots,i_{2m-3},1,i_{2m-2},i_{2m-1},,j_1,j_2, \cdots,j_{3g-2}).
\end{align*}
for $m \geq 1$ and $g \geq 1$. 
Let $J$ be a finite subset of $\partial D^2 \subset \partial \Sigma_{0,g+1}$
satisfying $ \sharp J =2m$.
We define the $\Q[A,\gyaku{A}]$-module homomorphism 
$i(g,m):\skein{\Sigma_{0,g+1},J} \to
\skein{\Sigma_{0,g+m+1}}$ by
$\kukakko{\lambda(g,m)(v)} \to \kukakko{\lambda(g+m,0)(\iota(g,m)(v))}$.
Using Theorem \ref{thm_Lickorish}(1), we have the following proposition.

\begin{prop}
The $\Q[A,\gyaku{A}]$-module homomorphism 
$i(g,m):\skein{\Sigma_{0,g+1},J} \to
\skein{\Sigma_{0,g+m+1}}$
is well-defined and injective.
\end{prop}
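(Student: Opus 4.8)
The plan is to reduce the proposition to the single geometric assertion that $\lambda(g,m)\colon\mathbb{V}(g,m)\to\mathcal{T}_0(\Sigma_{0,g+1},J)$ is a \emph{bijection}; once this is in hand, both well-definedness and injectivity are formal consequences of freeness. Granting the bijection, Theorem \ref{thm_przy} shows that $\shuugou{\kukakko{\lambda(g,m)(v)}\mid v\in\mathbb{V}(g,m)}$ is a $\Q[A,\gyaku{A}]$-basis of $\skein{\Sigma_{0,g+1},J}$. The prescription $\kukakko{\lambda(g,m)(v)}\mapsto\kukakko{\lambda(g+m,0)(\iota(g,m)(v))}$ then defines $i(g,m)$ on a basis with no relations to verify, so it extends uniquely to a $\Q[A,\gyaku{A}]$-module homomorphism; this gives well-definedness. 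For injectivity, recall that $\lambda(g+m,0)$ is a bijection onto the basis $\mathcal{T}_0(\Sigma_{0,g+m+1})$ of $\skein{\Sigma_{0,g+m+1}}$ by Theorem \ref{thm_Lickorish}(1) together with Theorem \ref{thm_przy}, and that $\iota(g,m)$ is injective. Hence $v\mapsto\kukakko{\lambda(g+m,0)(\iota(g,m)(v))}$ carries the chosen basis injectively onto a subset of a basis; thus $i(g,m)$ sends a basis to a linearly independent family and is therefore injective.

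It remains to establish that $\lambda(g,m)$ is injective, for which Lemma \ref{lemm_lambda_surjective} already supplies surjectivity onto $\mathcal{T}_0(\Sigma_{0,g+1},J)$. The idea is to realize geometrically the combinatorial operation encoded by $\iota(g,m)$. I would fix a standard orientation-preserving embedding of $\Sigma_{0,g+1}$ into $\Sigma_{0,g+m+1}$ that attaches $m$ extra holes near $\partial D^2$, arranged so that the $2m$ points of $J$ are paired up by the new handle regions, together with a fixed arc system in the complementary region. Gluing this fixed arc system to a tangle representative yields an operation on isotopy classes $c\colon\mathcal{T}_0(\Sigma_{0,g+1},J)\to\mathcal{T}_0(\Sigma_{0,g+m+1})$. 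Comparing Figure \ref{figure_tangle_basis} with Figure \ref{figure_handlebody_basis}, the insertion of the labels $1$ performed by $\iota(g,m)$ is precisely the effect of routing each boundary strand once through its new handle, so that $c(\lambda(g,m)(v))=\lambda(g+m,0)(\iota(g,m)(v))$ for every $v\in\mathbb{V}(g,m)$.

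Given this identity the injectivity of $\lambda(g,m)$ is immediate: if $\lambda(g,m)(v)=\lambda(g,m)(v')$ in $\mathcal{T}_0(\Sigma_{0,g+1},J)$, then applying $c$ yields $\lambda(g+m,0)(\iota(g,m)(v))=\lambda(g+m,0)(\iota(g,m)(v'))$, whence $\iota(g,m)(v)=\iota(g,m)(v')$ by Theorem \ref{thm_Lickorish}(1) and finally $v=v'$ by injectivity of $\iota(g,m)$. Combined with Lemma \ref{lemm_lambda_surjective}, this makes $\lambda(g,m)$ bijective and completes the argument.

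The main obstacle is the middle step: constructing $c$ as a genuine, isotopy-invariant operation on $\mathcal{T}_0(\Sigma_{0,g+1},J)$ and verifying the identity $c\circ\lambda(g,m)=\lambda(g+m,0)\circ\iota(g,m)$. This amounts to tracking each representative multicurve through the attached handles up to isotopy and matching the admissibility conditions that cut out $\mathbb{V}(g,m)$ against those defining $\mathbb{V}(g+m)$, so that the trivalent-vertex constraints involving the inserted label $1$ correspond exactly across the handle attachment. Everything after that identity is formal linear algebra over $\Q[A,\gyaku{A}]$, so the diagrammatic bookkeeping of the handle insertion is where the real content lies.
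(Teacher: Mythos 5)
Your proposal is correct, and its external inputs --- Theorem \ref{thm_Lickorish}(1), the freeness of skein modules from Theorem \ref{thm_przy}, and the surjectivity Lemma \ref{lemm_lambda_surjective} --- are exactly the ones the paper relies on; the difference is in the logical organization, and your ordering is arguably the only non-circular way to complete the argument. The paper gives no proof beyond the sentence ``Using Theorem \ref{thm_Lickorish}(1), we have the following proposition,'' and then records the bijectivity of $\lambda(g,m)$ as a \emph{corollary} of the proposition. Read literally this order is delicate: since $\lambda(g+m,0)\circ\iota(g,m)$ sends distinct elements of $\mathbb{V}(g,m)$ to distinct basis elements of $\skein{\Sigma_{0,g+m+1}}$, well-definedness of the prescription $\kukakko{\lambda(g,m)(v)}\mapsto\kukakko{\lambda(g+m,0)(\iota(g,m)(v))}$ already forces $\lambda(g,m)$ to be injective, so no proof of the proposition can avoid establishing the corollary along the way. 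You resolve this by proving bijectivity of $\lambda(g,m)$ first, via the geometric gluing map $c$ (attach $m$ bands to $\partial D^2$ pairing up the points of $J$, extend each representative by the cores of the bands) together with the identity $c\circ\lambda(g,m)=\lambda(g+m,0)\circ\iota(g,m)$, after which well-definedness and injectivity of $i(g,m)$ are formal consequences of freeness. This band-gluing picture is implicitly the paper's as well: it surfaces just after the proposition, where $i(g,m)$ is identified up to diffeomorphism with the composite $\eta$ of the maps $i(P_{2k-1},P_{2k})$; note that you correctly avoid quoting that later lemma (whose proof cites this proposition), using $c$ only as a well-defined map on isotopy classes rather than as an injective module map, so there is no circularity. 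The step you flag as the remaining obstacle --- isotopy-invariance of $c$ and the displayed compatibility with $\iota(g,m)$ --- is indeed the only geometric content, and it is exactly what the paper's one-line proof leaves implicit; your reconstruction supplies it at the same level of rigor the paper itself would need.
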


\begin{cor}
The map $\lambda(g,m) :
\mathbb{V} (g,m) \to \mathcal{T}_0  (\Sigma_{0,g+1},J)$ is bijective.
\end{cor}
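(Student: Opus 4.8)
The plan is to deduce the bijectivity of $\lambda(g,m)$ from the already-established injectivity of the closed-surface map $\lambda(g+m,0)$ (Theorem \ref{thm_Lickorish}(1)), by transporting the tangle problem on $\Sigma_{0,g+1}$ to a link problem on the closed-up surface $\Sigma_{0,g+m+1}$ through the two injective maps $\iota(g,m)$ and $i(g,m)$. Surjectivity is immediate and requires no new argument: Lemma \ref{lemm_lambda_surjective} already shows that every class in $\mathcal{T}_0(\Sigma_{0,g+1},J)$ is represented by some $\lambda(g,m)(v)$ with $v \in \mathbb{V}(g,m)$.

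The substantive content is injectivity, and this is where I would concentrate the work. Suppose $v, v' \in \mathbb{V}(g,m)$ satisfy $\lambda(g,m)(v) = \lambda(g,m)(v')$ in $\mathcal{T}_0(\Sigma_{0,g+1},J)$; the goal is $v = v'$. Passing to the skein module, isotopic multicurves represent the same element, so $\kukakko{\lambda(g,m)(v)} = \kukakko{\lambda(g,m)(v')}$ in $\skein{\Sigma_{0,g+1},J}$. Applying the $\Q[A,\gyaku{A}]$-module homomorphism $i(g,m)$, which is well-defined and injective by the preceding proposition, and invoking its defining formula, one obtains
\[
\kukakko{\lambda(g+m,0)(\iota(g,m)(v))} = \kukakko{\lambda(g+m,0)(\iota(g,m)(v'))}
\]
in $\skein{\Sigma_{0,g+m+1}}$. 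By Theorem \ref{thm_przy} the set $\mathcal{T}_0(\Sigma_{0,g+m+1})$ is a free $\Q[A,\gyaku{A}]$-basis, so two of its elements can coincide in the skein module only when they are equal as isotopy classes; hence $\lambda(g+m,0)(\iota(g,m)(v)) = \lambda(g+m,0)(\iota(g,m)(v'))$. The injectivity of $\lambda(g+m,0)$ from Theorem \ref{thm_Lickorish}(1) then gives $\iota(g,m)(v) = \iota(g,m)(v')$, and the injectivity of $\iota(g,m)$ built into its definition forces $v = v'$.

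The step I expect to be the main obstacle is precisely the legitimacy of this reduction, rather than any computation: one must be sure that equality of the two multicurves in $\mathcal{T}_0$ genuinely forces equality of the associated skein-module elements before $i(g,m)$ is applied, and that $i(g,m)$ is a bona fide module homomorphism on the spanning family $\{\kukakko{\lambda(g,m)(v)}\}$ and not merely a set-theoretic assignment. Both facts are supplied for free by Theorem \ref{thm_przy} (freeness of the basis $\mathcal{T}_0$) together with the preceding proposition, so once these are in hand the corollary is a formal consequence of Lemma \ref{lemm_lambda_surjective}, the injectivity of $i(g,m)$ and of $\iota(g,m)$, and Lickorish's theorem; no further geometric input is needed.
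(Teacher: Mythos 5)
Your proof is correct and follows exactly the paper's (implicit) route: the paper states this corollary without separate proof precisely because, as you show, it is a formal consequence of the well-definedness and injectivity of $i(g,m)$ from the preceding proposition, surjectivity from Lemma \ref{lemm_lambda_surjective}, freeness of the basis $\mathcal{T}_0$ (Theorem \ref{thm_przy}), the injectivity of $\lambda(g+m,0)$ from Theorem \ref{thm_Lickorish}(1), and the injectivity of $\iota(g,m)$.
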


Let $\Sigma$ be a compact connected oriented surface with non-empty boundary,
$J$ a finite subset of $\partial \Sigma$ and $P_1$ and $P_2$ two points of
$J$. We choose two orientation preserving
embeddings $\delta_1,\delta_2 :I \to \partial \Sigma$
such that $\delta_1(I) \cap J =\delta_1 (\frac{1}{2}) =P_1$ and that
$\delta_2(I) \cap J =\delta_2 (\frac{1}{2}) =P_2$. 
We define a surface $\Sigma (P_1, P_2)$  by gluing
$\Sigma$ and $I \times I$ by $(0,1-t) =\delta_1 (t)$ and $(1,t) = \delta_2 (t)$.
We introduce $i'(P_1, P_2) :\mathcal{T}_0 (\Sigma,J) \to
\mathcal{T}_0 (\Sigma(P_1, P_2), J\backslash \shuugou{P_1,P_2})$ such
that $i'(P_1,P_2)(L)$ is the isotopy class of $\tilde{L} \cup \shuugou{
(\frac{1}{2},t) \in I \times I|t \in I}$ where $\tilde{L} $ represents $L$.
The map $i'(P_1,P_2)$ induces
a $\Q [A, \gyaku{A}]$-module homomorphism 
$i(P_1, P_2) :\mathcal{S} (\Sigma,J) \to
\mathcal{S} (\Sigma(P_1, P_2), J\backslash \shuugou{P_1,P_2})$.

\begin{lemm}
Let $\Sigma$ be a compact connected oriented surface with non-empty boundary,
$J$ a finite subset of $\partial \Sigma$ and $P_1$ and $P_2$ two points of
$J$. Then $i'(P_1, P_2) :\mathcal{T}_0 (\Sigma,J) \to
\mathcal{T}_0 (\Sigma(P_1, P_2), J\backslash \shuugou{P_1,P_2})$ is injective. 
In other words, $i(P_1, P_2) :\mathcal{S} (\Sigma,J) \to 
\mathcal{S} (\Sigma(P_1, P_2), $ $J \backslash \shuugou{P_1,P_2})$ is injective.
\end{lemm}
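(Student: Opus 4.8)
The plan is to construct an explicit left inverse to $i'(P_1, P_2)$ by cutting along the cocore of the attached band. First note that, by Theorem \ref{thm_przy}, $\skein{\Sigma, J}$ is the free $\Q[A, \gyaku{A}]$-module on $\mathcal{T}_0(\Sigma, J)$ and $\skein{\Sigma(P_1,P_2), J \backslash \shuugou{P_1, P_2}}$ is free on $\mathcal{T}_0(\Sigma(P_1,P_2), J \backslash \shuugou{P_1, P_2})$. Since $i(P_1,P_2)$ is by definition the $\Q[A,\gyaku{A}]$-linear extension of the map of bases $i'(P_1,P_2)$, it is injective as soon as $i'(P_1,P_2)$ is injective as a map of sets. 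So it suffices to treat $i'(P_1,P_2)$, which is the content of the two formulations and justifies the ``in other words''.

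Next I would fix the geometry. Inside the glued square $I \times I \subset \Sigma(P_1,P_2)$, let $c_0$ be the cocore of the band: the arc joining a point of the top edge to a point of the bottom edge that separates the left collar (glued to $\delta_1$) from the right collar (glued to $\delta_2$), and which is disjoint from $J \backslash \shuugou{P_1,P_2}$. Cutting $\Sigma(P_1,P_2)$ along $c_0$ splits the square into these two collars of $\delta_1$ and $\delta_2$ and hence recovers $\Sigma$ canonically. By construction the strand of $i'(L)$ running through the band from $P_1$ to $P_2$ crosses $c_0$ transversally in a single point, while the rest of any representative $\tilde L$ lies in $\Sigma$ and is disjoint from the interior of the band; thus $i'(L)$ meets $c_0$ in exactly one transverse point. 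Cutting this picture along $c_0$ and collapsing the two collars returns precisely $\tilde L$ with its endpoints restored at $P_1$ and $P_2$, that is, it returns $L$.

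For injectivity I would show that this cutting operation descends to isotopy classes on the relevant curves. The $\Z/2$-intersection number of $i'(L)$ with the arc $c_0$ is $1$, so $i'(L)$ can never be isotoped off $c_0$ and its geometric intersection number with $c_0$ is minimal, equal to $1$. Given $L, L' \in \mathcal{T}_0(\Sigma, J)$ with $i'(L)$ isotopic to $i'(L')$ in $\Sigma(P_1,P_2)$, both curves are therefore in minimal position with respect to $c_0$; by the bigon (innermost-disk) criterion on surfaces, the isotopy may be taken to keep exactly one transverse intersection with $c_0$ throughout, equivalently the pairs $(i'(L), c_0)$ and $(i'(L'), c_0)$ are ambient isotopic. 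Here the hypothesis that elements of $\mathcal{T}_0$ have no inessential components is what rules out spurious trivial circles and hence spurious bigons. Cutting along $c_0$ carries this isotopy to an isotopy in $\Sigma$ between the recovered curves, namely between $L$ and $L'$, so $L = L'$ and $i'(P_1,P_2)$ is injective.

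The step I expect to be the main obstacle is the last one: promoting an arbitrary ambient isotopy between $i'(L)$ and $i'(L')$ to one compatible with the fixed cut arc $c_0$, i.e.\ preserving the single transverse intersection. This is exactly where the bigon argument does the work, and where non-emptiness of $\partial \Sigma$ is convenient. As an alternative route that sidesteps this surface-topology lemma, one may instead apply a diffeomorphism $(\Sigma, J) \cong (\Sigma_{0,g+1}, J)$ and realize $i'(P_1,P_2)$ as a standard band attachment; under the bijections $\lambda(g,m)$ established above, $i'(P_1,P_2)$ then becomes an explicit, manifestly injective map on the index sets $\mathbb{V}(g,m)$, and injectivity follows directly from the bijectivity of the $\lambda$'s.
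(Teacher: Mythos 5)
Your reduction to injectivity of $i'(P_1,P_2)$ on isotopy classes is fine (this is the paper's ``in other words,'' using the freeness from Theorem \ref{thm_przy}), and your geometric setup is correct: the added strand meets the cocore $c_0$ of the band in exactly one transverse point, and cutting along $c_0$ recovers $(\Sigma,\tilde L)$. The gap is at the crucial step. From ``$i'(L)$ and $i'(L')$ are isotopic and each is in minimal position with respect to $c_0$'' you conclude, ``by the bigon criterion,'' that the isotopy can be chosen to keep a single transverse intersection with $c_0$ throughout, equivalently that the pairs $(i'(L),c_0)$ and $(i'(L'),c_0)$ are ambient isotopic. The bigon criterion does not say this: it only characterizes when a given transverse configuration is in minimal position. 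What you need is a uniqueness-of-minimal-position statement — isotopic multicurves, each in minimal position with respect to a fixed properly embedded arc, are related by an ambient isotopy preserving that arc — applied to a multicurve most of whose components are \emph{disjoint} from $c_0$. After cutting along $c_0$, that statement is precisely the injectivity being proved, so as written the argument assumes what it must show. The statement is true, but it requires genuine work: an innermost bigon/rectangle analysis comparing $c_0$ with its image under the inverse of the ambient isotopy, an argument (e.g.\ via $\pi_1(\Sigma)$ being a free factor of $\pi_1(\Sigma(P_1,P_2))$) to control the components that never meet $c_0$, and the small correction for endpoints sliding along the boundary when one cuts. Also, the hypothesis of no inessential components is not what rules out bigons; its role is to guarantee that $i'$ actually lands in $\mathcal{T}_0$, i.e.\ that all components of $i'(L)$ stay essential.

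Your closing alternative is, in substance, the paper's actual proof, with one wrinkle worth noting: rather than analyzing a single band attachment, the paper composes all $m$ attachments $i(P_1,P_2),\dots,i(P_{2m-1},P_{2m})$ so as to land in a surface $\tilde{\Sigma}$ diffeomorphic to $\Sigma_{0,g+m+1}$ with no marked points; under the parametrizations $\lambda(g,m)$ and $\lambda(g+m,0)$ (Lickorish's theorem together with Lemma \ref{lemm_lambda_surjective}), the composite becomes the index map $\iota(g,m):\mathbb{V}(g,m)\to\mathbb{V}(g+m)$, which is visibly injective, and injectivity of the first factor $i(P_1,P_2)$ follows. Fleshed out that way, your alternative route is complete; your main route, as it stands, is not.
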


\begin{proof}
Let $J$ be $\shuugou{P_1,P_2, \cdots,P_{2m-1},P_{2m}}$.
We denote by $\eta \defeq i(P_{2m-1},P_{2m})
 \circ \cdots \circ i(P_3,P_4)\circ i(P_1,P_2) $
and by $\tilde{\Sigma} \defeq
\Sigma(P_1,P_2)(P_3,P_4) \cdots (P_{2m-1},P_{2m})$.
For some integer $g$ and some finite subset 
$J' \subset \partial D^2 \subset \partial \Sigma_{0,g+1}$,
we choose a diffeomorphism $\chi :(\Sigma \times I,
J \times I) \to (\Sigma_{0,g+1} \times I, J' \times I)$
and 
$\chi' :\tilde{\Sigma} \times I \to
\Sigma_{0,g+m+1} \times I$
satisfying $\chi_* \circ i(g,m) \circ (\chi'_*)^{-1} =
\eta$.
Here we denote by $\chi_*:\skein{\Sigma,J} \to
\skein{\Sigma_{0,g+1},J'}$ and $\chi'_*:
\skein{\tilde{\Sigma}} \to \skein{\Sigma_{0,g+m+1}}$ 
the $\Q[A, \gyaku{A}]$-module automorphisms induced by
$\chi$ and $\chi'$, respectively.
Since $i(g,m)$ is injective,
$\eta$ is also injective.
Hence $i(P_1,P_2)$ is injective.
This proves the lemma.

\end{proof}

\begin{proof}[Proof of Theorem \ref{thm_Hausdorff} in general cases]

We suppose $J \neq  \emptyset$.
Let $J$ be $\shuugou{P_1, \cdots,P_{2m}}$.
We denote by $\eta \defeq i(P_{2m-1},P_{2m})
 \circ \cdots \circ i(P_1,P_2) $
and by $\tilde{\Sigma} \defeq
\Sigma(P_1,P_2) \cdots (P_{2m-1},P_{2m})$.
By definition, we have $\eta (F^n \skein{\Sigma,J}) 
\subset F^n \skein{\tilde{\Sigma}}$ for any $n
\in \Z_{\geq 0}$.
Using Lemma \ref{lemm_Hausdorff}, we have
$\eta (\cap_{n=0}^ \infty F^n \skein{\Sigma,J})
\subset \cap_{n=0}^\infty F^n \skein{\tilde{\Sigma}} =0$.
Since $\eta$ is injective,
we have $\cap_{n=0}^ \infty F^n \skein{\Sigma,J}=0$.
This proves the theorem.

\end{proof}

\end{document}